\newtheorem{theorem}{Theorem}[section]
\newtheorem{lemma}[theorem]{Lemma}
\newtheorem{corollary}[theorem]{Corollary}
\newtheorem{remark}[theorem]{Remark}
\newtheorem{example}[theorem]{Example}
\newtheorem{assumption}[theorem]{Assumption}
\begin{document}

\title{On fixed gain recursive estimators with discontinuity in the parameters}

\author{Huy N. Chau\thanks{MTA Alfr\'ed R\'enyi Institute of
Mathematics, Budapest.
E-mail: \texttt{chau@renyi.mta.hu}}
\and Chaman Kumar\thanks{Indian Institute of Technology, Roorkee. E-mail: \texttt{c.kumarfma@iitr.ac.in}} \and
Mikl\'os R\'asonyi\thanks{MTA Alfr\'ed R\'enyi Institute of
Mathematics, Budapest.
E-mail: \texttt{rasonyi@renyi.mta.hu}}  
\and Sotirios Sabanis\thanks{University of Edinburgh. E-mail: \texttt{S.Sabanis@ed.ac.uk}}}

\date{\today}

\maketitle

\begin{abstract}
In this paper we estimate the expected tracking error of a fixed gain stochastic approximation scheme.
The underlying process is not assumed Markovian, a mixing condition is required instead. Furthermore,
the updating function may be discontinuous in the parameter.
\end{abstract}

\noindent\textsl{MSC 2010 subject classification:} Primary: 62L20; secondary: 93E15, 93E35\\

\textsl{Keywords:} adaptive control, stochastic gradient, fixed gain, recursive estimators,
parameter discontinuity, mixing processes, non-Markovian dynamics

\section{Introduction}

Let $\mathbb{N}:=\{0,1,2,\ldots\}$. We are interested in stochastic approximation procedures where
a parameter estimate ${\theta}_t$, $t\in\mathbb{N}$ is updated by a recursion of the form
\begin{equation}\label{elso}
{\theta}_{t+1}={\theta}_t+\gamma_{t+1} H({\theta}_t,X_{t+1}),\ t\in\mathbb{N},
\end{equation}
starting from some guess $\theta_0$. Here $X_t$ is a stationary signal, $\gamma_t$ is a sequence of real numbers and
$H(\cdot,\cdot)$ is a given functional. The most common choices are
$\gamma_t=1/t$ (decreasing gain) and $\gamma_t:=\lambda$ (fixed gain).
The former family of procedures is aimed to converge to $\theta^*$ with $G(\theta^*)=0$ where
$G(\theta):=EH(\theta,X_t)$.
The latter type of procedures is supposed to ``track'' $\theta^*$, even when the system dynamics is (slowly)
changing.

In most of the related literature the error analysis of \eqref{elso} was carried out
only in the case where $H$ is (Lipschitz-)continuous in $\theta$. This restrictive hypothesis fails to accommodate discontinuous procedures which are common in practice, e.g.
the signed regressor, signed error and sign-sign algorithms (see \cite{buck}, \cite{e}, \cite{e2})
or the Kohonen algorithm (see \cite{kohonen1, kohonen}). 
Recently, the decreasing
gain case was investigated in \cite{fort} for controlled Markov chains and the procedure \eqref{elso} was shown to
converge almost surely under appropriate
assumptions, without requiring continuity of $H$. We refer to \cite{fort} for a review of the relevant literature and for
examples.

The purpose of the present article is an exploration of the case where $X_t$ has possibly non-Markovian dynamics.
We consider fixed gain procedures and weaken continuity of $H$ to continuity in the sense
of conditional expectations, see \eqref{kac} below, compare also to condition $\mathbf{H4}$ in \cite{fort}.

We follow the methodology of the papers \cite{laci8,laci2,laci3} which are based on the concept
of $L$-mixing, coming from \cite{laci1}. Our arguments work under a modification of the
original definition of $L$-mixing, see Section \ref{ketto}. We furthermore assume a certain asymptotic forgetting
property, see Assumption \ref{infsum}. We manage
to estimate the tracking error for \eqref{elso}, see our main result, Theorem \ref{main} in Section \ref{harom}.

At this point we would like to make comparisons with another important reference, \cite{lp},
where no Markovian or continuity assumptions were made, certain averaging properties
of the driving process were required instead. It follows from Subsection 4.2 of \cite{lp} that almost sure convergence 
of a decreasing gain procedure can be guaranteed under the $\alpha$-mixing property of the driving process,
see e.g. \cite{ethier} about various mixing concepts. It seems that establishing the $L$-mixing property is often 
relatively simple while $\alpha$-mixing is rather stringent and difficult to prove.
In addition, our present work provides explicit estimates for the error.
See Section \ref{negy} for examples 
illustrating the scope of Theorem \ref{main}.

Section \ref{numera} reports simulations showing that the theoretical estimate
is in accordance with numerical results. 
Proofs for Sections \ref{ketto} and \ref{harom} are relegated to Section \ref{appi}.

\section{$L$-mixing and conditional $L$-mixing}\label{ketto}

Estimates for the error of stochastic approximation schemes like \eqref{elso}
can be proved under various ergodicity assumptions
on the driving process. It is demonstrated in \cite{laci2} and \cite{laci3} that the concept of $L$-mixing (see its 
definition below in the present section) is
sufficiently strong for this purpose. An appealing feature of $L$-mixing is that it can easily be
applied in non-Markovian contexts as well, see Section \ref{negy}.

It turns out, however, that for discontinuous updating functions $H$ the arguments of \cite{laci2,laci3} break down.
To tackle discontinuities, we introduce a new concept of mixing here, which is of interest on its own right.

Throughout this paper we are working on a probability space $(\Omega,\mathcal{F},P)$ that is equipped
with a discrete-time filtration $\mathcal{F}_n$, $n\in\mathbb{N}$ as well as with a
decreasing sequence of sigma-fields $\mathcal{F}_n^+$, $n\in\mathbb{N}$ such that $\mathcal{F}_n$ is
independent of $\mathcal{F}_n^+$, for all $n$.

Expectation of
a random variable $X$ will be denoted by $EX$.
For any $m\geq 1$, for any $\mathbb{R}^m$-valued random variable $X$ and for any $1\leq p<\infty$, let us set
$\Vert X\Vert_p:=\sqrt[p]{E|X|^p}$. We denote by $L^p$ the set of $X$ satisfying $\Vert X\Vert_p<\infty$.
The indicator function of a set $A$ will be denoted by $1_A$.

We now present the class of $L$-mixing processes which were introduced in \cite{laci1}. This concept proved to
be extremely useful in solving certain hard problems of system identification, see e.g. \cite{laci4,laci5,laci6,laci7,q}.

Fix an integer $N\geq 1$ and let $D\subset \mathbb{R}^N$ be a set of parameters. A measurable function 
$X:\mathbb{N}\times D\times\Omega\to\mathbb{R}^m$ is called a random field. We will drop dependence on $\omega\in\Omega$ and
use the notation $X_t(\theta)$, $t\in\mathbb{N}$, $\theta\in D$. 

For any $r\geq 1$, a
random field $X_t(\theta)$, $t\in\mathbb{N}$,
$\theta\in D$ is called \emph{bounded in $L^r$} if
\begin{equation}\label{rysy}
M_r(X):=\sup_{\theta\in D}\sup_{t\in\mathbb{N}}\Vert X_t(\theta)\Vert_r<\infty.
\end{equation}
For an $L^r$-bounded $X_t(\theta)$, define also the quantities
$$
\gamma_r(\tau,X):=\sup_{\theta\in D}\sup_{t\geq\tau} \Vert X_t(\theta)-E[X_t(\theta)\vert \mathcal{F}_{t-\tau}^+]\Vert_r,
\ \tau\geq 1,
$$
and
\begin{equation}\label{mysy}
\Gamma_r(X):=\sum_{\tau=1}^{\infty}\gamma_r(\tau,X).
\end{equation}

For some $r\geq 1$, a random field $X_t(\theta)$ is called \emph{uniformly $L$-mixing of order} $r$ (ULM-$r$) if it is 
bounded in $L^r$; for all $\theta\in D$, $X_t(\theta)$, $t\in\mathbb{N}$ is
adapted to $\mathcal{F}_t$, $t\in\mathbb{N}$;  and
$\Gamma_r(X)<\infty$. Here uniformity refers to the parameter $\theta$.
Furthermore, $X_t(\theta)$ is called \emph{uniformly $L$-mixing} if it is uniformly $L$-mixing of order $r$ for all $r\geq 1$.

In the case of a single stochastic process (which corresponds to the
case where the parameter set $D$ is a singleton) we apply the terminology ``$L$-mixing process of order $r$'' and
``$L$-mixing process''.

\begin{remark}\label{kony}{\rm The $L$-mixing property shows remarkable stability under various operations,
this is why it proved to be a versatile tool in the analysis of stochastic systems, see
\cite{laci2,laci3,laci4,laci5,laci6,laci7,q}.
If $F$ is a Lipschitz function and $X_t(\theta)$ is ULM-$r$ then $F(X_t(\theta))$ is also
ULM-$r$, by \eqref{dirk} in Lemma \ref{mall} below.
Actually, if $F$ is such that $|F(x)-F(y)|\leq K(1+|x|^k+|y|^k)|x-y|$ for all $x,y\in\mathbb{R}$ with some $k,K>0$ 
then $F(X_t(\theta))$ is uniformly $L$-mixing whenever $X_t(\theta)$ is,
see Proposition 2.4 of \cite{q}.
Stable linear filters also preserve the $L$-mixing property, see \cite{laci1}. Proving that $F(X_t(\theta))$
is $L$-mixing for discontinuous $F$ is more delicate, see Section \ref{negy} for helpful techniques.

Other mixing conditions could alternatively be used. Some of these 
are inherited by arbitrary measurable functions of the respective
processes (e.g. $\phi$-mixing, see Section 7.2 of \cite{ethier}). However, they are considerably 
difficult to verify while $L$-mixing (and its conditional version to be defined below)
is relatively simple to check, see also the related remarks on page 2129 of \cite{laci7}.}
\end{remark}

Recall that, for any family $Z_i$, $i\in I$ of real-valued random variables, $\mathrm{ess.}\sup_{i\in I} Z_i$
denotes a random variable that is an almost sure upper bound for each $Z_i$ and it is a.s.
smaller than or equal to any other such bound. Such an object is known to exist, independently of
the cardinality of $I$, and it is a.s. unique, see e.g. Proposition VI.1.1. of \cite{neveu}.

Now we define conditional $L$-mixing, inspired by \eqref{rysy} and \eqref{mysy}.
Let $X_t(\theta)$, $t\in\mathbb{N}$, $\theta\in D$ be a random field bounded in $L^r$ for some $r\geq 1$
and define, for each $n\in\mathbb{N}$,
\begin{eqnarray*}
	M^{n}_r(X) &:=& \mathrm{ess}\sup_{\theta\in D}\sup_{t \in\mathbb{N}} 
	E^{1/r}[|X_{n+t}(\theta)|^r\big\vert\mathcal{F}_n],\\
	\gamma^{n}_r(\tau,X)&:=& \mathrm{ess}\sup_{\theta\in D}\sup_{t\geq\tau} 
	E^{1/r}[|X_{n+t}(\theta)-E[X_{n+t}(\theta)\vert \mathcal{F}_{n+t-\tau}^+\vee \mathcal{F}_n]|^r\big\vert
	\mathcal{F}_n],\ \tau\geq 1,\\
	\Gamma^{n}_r(X) &:=&\sum_{\tau= 1}^{\infty}\gamma^{n}_r(\tau,X).
\end{eqnarray*}

For some $s,r\geq 1$, we call $X_t(\theta)$, $t\in\mathbb{N}$, $\theta\in D$
uniformly \emph{conditionally} $L$-mixing of order $(r,s)$ (abbreviation: UCLM-$(r,s)$) if 
it is $L^r$-bounded; $X_t(\theta)$, $t\in\mathbb{N}$ is adapted to 
$\mathcal{F}_t$, $t\in\mathbb{N}$ 
for all $\theta\in D$ 
and the sequences  $M^n_r(X)$, $\Gamma^n_r(X)$, $n\in\mathbb{N}$
are bounded in $L^s$. When the UCLM-$(r,s)$ property holds for all $r,s\geq 1$ then we simply say that the
random field is uniformly conditionally $L$-mixing.
In the case of stochastic processes (when $D$ is a singleton)
the terminology ``conditionally $L$-mixing process of order $(r,s)$'' (respectively, conditionally
$L$-mixing process) will be used. 

\begin{remark}\label{aci}
{\rm Note that if $\mathcal{F}_0$ is trivial and $X_t(\theta)$ is UCLM-$(r,1)$ then it is also ULM-$r$. Indeed,
in that case
\begin{eqnarray*}
M_r(X) = M_r^{0}(X),\quad
\Gamma_r(X) = \Gamma_r^{0}(X).
\end{eqnarray*}
For non-trivial $\mathcal{F}_0$, however, no such implication holds.}  
\end{remark}

\begin{remark}\label{kony1}{\rm If $F$ is a Lipschitz function and $X_t(\theta)$ is UCLM-$(r,1)$ then $F(X_t(\theta))$ is also
UCLM-$(r,1)$, by Lemma \ref{mall} below. Conditional versions of the arguments in Lemma \ref{product} show that
if $X_t(\theta)$ is UCLM-$(rp,1)$ and $Y_t(\theta)$ is UCLM-$(rq,1)$ (where $1/p+1/q=1$)
then 
\begin{eqnarray}\label{betlehem1}
M^n_r(XY)&\leq& M^n_{rp}(X)M^n_{rq}(Y),\\ 
\Gamma^n_r(XY) &\leq& 2M^n_{rp}(X)\Gamma^n_{rq}(Y) +2\Gamma^n_{rp}(X)M^n_{rq}(Y).\label{betlehem2}
\end{eqnarray}}
\end{remark}

We now present another concept, a surrogate for continuity in $\theta\in D$.
We say that the random field $X_t(\theta)\in L^1$, $t\in\mathbb{N}$, $\theta\in D$ satisfies
the \emph{conditional Lipschitz-continuity} (CLC) property if there is a deterministic $K>0$ such that,
for all $\theta_1,\theta_2\in D$ and for all $n\in\mathbb{N}$,
\begin{equation}\label{kac}
E\left[ \vert X_{n+1}(\theta_1)-X_{n+1}(\theta_2)\vert\, \Big|\mathcal{F}_n\right]\leq K|\theta_1-\theta_2|,\mbox{ a.s.}
\end{equation}

Pathwise discontinuities of $\theta\to X_n(\theta)$ can often be smoothed out and \eqref{kac} can be
verified by imposing some conditions on the one-step 
conditional distribution of $X_{n+1}$ given $\mathcal{F}_n$, see Assumption \ref{smooth} and Lemma \ref{lem_CLC} below.

\begin{remark}{\rm We comment on the differences between condition $\mathbf{H4}$ of \cite{fort} and our CLC property. Assume that $X$ is stationary and Markovian. 
On one hand, $\mathbf{H4}$ of \cite{fort} stipulates that, for $\delta>0$ 
\begin{equation}\label{ooo}
\sup_{\theta\in D_c}E\left[\sup_{\theta'\in D_c,\ |\theta-\theta'|\leq \delta}|H(\theta,X_1)-H(\theta',X_1)|
\right]\leq K\delta^{\alpha}
\end{equation}
for any compact $D_c\subset D$ with some $K>0$ (that may depend on $D_c$) and with some $0<\alpha\leq 1$
(independent of $D_c$). On the other hand, CLC is equivalent to
\begin{equation}\label{pok}
\sup_{\theta,\theta'\in D,\ |\theta-\theta'|\leq \delta}
E\left[\left|H(\theta,X_1)-H(\theta',X_1)\right|\,\big\vert X_0=x\right]\leq K\delta. 
\end{equation}
for $\mathrm{Law}(X_0)$-almost every $x$.
Clearly, \eqref{ooo} allows H\"older-continuity (i.e. $\alpha<1$) while \eqref{pok} 
requires Lipschitz-continuity. In the case $\alpha=1$ \eqref{ooo} is not
comparable to CLC though both express a kind of ``continuity in the average''.}
\end{remark}


The main results of our paper require a specific structure for the sigma-algebras which facilitates 
to deduce properties of conditional $L$-mixing processes from those of ``unconditional'' ones. More precisely,
we rely on the crucial Doob-type inequality in Theorem \ref{estim} below. This could probably be proved for arbitrary
sigma-algebras but only at the price of redoing all the tricky arguments of \cite{laci1}
in a more difficult context. We refrain from this since Theorem \ref{estim} can accommodate most models
of practical importance. Let $\mathbb{Z}$ denote the set of integers.

\begin{theorem}\label{estim} 
Fix $r>2$, $n\in \mathbb{N}$. Assume that, for all $t\in\mathbb{N}$, $\mathcal{F}_t=\sigma(\varepsilon_j,\ j\in\mathbb{N},\ j\leq t)$, $\mathcal{F}_t^+:=\sigma(\varepsilon_j,\ j>t)$
for some i.i.d. sequence $\varepsilon_j$, $j\in\mathbb{Z}$ with values in some Polish space $\mathcal{X}$.
Let $W_t$, $t\in\mathbb{N}$ be a conditionally $L$-mixing process of order $(r,1)$, satisfying 
$E[W_t\vert\mathcal{F}_n]=0$ a.s. for all {$t\geq n$}. 
Let $m >n$ and let $b_t$, $n< t\leq m$ be deterministic numbers. Then we have
\begin{equation}\label{mandrill}
E^{1/r}\left[ \max_{n < t \le m} \left| \sum_{s = n+1}^{t} b_s W_s \right|^r \big\vert\mathcal{F}_n \right]
 \le C_r \left( \sum_{s=n+1}^{m} b_s^2 \right)^{1/2} \sqrt{{M}_r^{n}(W) \Gamma_r^{n}(W)},
\end{equation}
almost surely, where $C_r$ is a deterministic constant depending only on $r$ but independent of $n,m$.
\end{theorem}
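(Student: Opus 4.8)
The plan is to exploit Assumption \ref{opa} in order to reduce the conditional estimate \eqref{mandrill} to its already-known \emph{unconditional} counterpart, namely the L-mixing moment inequality of \cite{laci1}, rather than redoing the delicate martingale/backward-decomposition arguments of that paper in a conditional setting. The starting observation is that, because $\Omega=\mathcal{X}^{\mathbb{Z}}$ carries a product measure and $\mathcal{F}_n=\sigma(\varepsilon_i,\,i\leq n)$, the regular conditional probability $P(\,\cdot\,\vert\mathcal{F}_n)(\omega)$ exists and, for $P$-almost every $\omega$, it freezes the coordinates $(\varepsilon_i)_{i\leq n}$ at their realised values while leaving $(\varepsilon_i)_{i>n}$ i.i.d. with law $\nu$. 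Thus the conditional law is again a canonical product space of exactly the type demanded by Assumption \ref{opa}, and in particular the independence $\mathcal{F}_t\perp\mathcal{F}_t^+$ and the Burkholder machinery behind the unconditional inequality remain available on it.

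First I would pass to the shifted objects $\tilde W_j:=W_{n+j}$, $\tilde b_j:=b_{n+j}$ for $1\le j\le m-n$, together with the shifted filtration $\hat{\mathcal{F}}_j:=\sigma(\varepsilon_i,\,n<i\le n+j)$ and the backward fields $\hat{\mathcal{F}}_j^+:=\mathcal{F}_{n+j}^+$. On the conditional space the frozen past coordinates are constants, so adaptedness of $W$ to $\mathcal{F}$ transfers to adaptedness of $\tilde W_j$ to $\hat{\mathcal{F}}_j$, while the conditioning turns the term $\mathcal{F}_{n+t-\tau}^+\vee\mathcal{F}_n$ appearing in the definition of $\gamma_r^n$ into the ordinary backward field $\hat{\mathcal{F}}_{j-\tau}^+$. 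Consequently, for a.e.\ $\omega$, the process $\tilde W_j$ is an (unconditional) L-mixing process of order $r$ with respect to $P(\,\cdot\,\vert\mathcal{F}_n)(\omega)$, whose unconditional mixing constants coincide with the conditional ones evaluated at $\omega$, i.e.\ $M_r(\tilde W)=M_r^{n}(W)(\omega)$ and $\Gamma_r(\tilde W)=\Gamma_r^{n}(W)(\omega)$; moreover the hypothesis $E[W_t\vert\mathcal{F}_n]=0$ becomes a genuine zero-mean property $E_{P(\cdot\vert\mathcal{F}_n)(\omega)}[\tilde W_j]=0$. Applying the unconditional maximal L-mixing inequality of \cite{laci1} (valid for $r>2$, with a universal constant $C_r$ depending only on $r$) to $\tilde W_j$ with the deterministic weights $\tilde b_j$ yields the bound $C_r(\sum_j\tilde b_j^2)^{1/2}\sqrt{M_r(\tilde W)\Gamma_r(\tilde W)}$, which is precisely the right-hand side of \eqref{mandrill} after undoing the shift; since $C_r$ does not depend on the underlying law, the same constant serves every conditional measure, and assembling the estimates over $\omega$ gives the almost sure statement.

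The main obstacle, and the technical heart of the argument, is Step~2: rigorously identifying the regular conditional probability as a product space of the required form and verifying that the shifted field $\tilde W$ inherits the unconditional L-mixing structure with constants matching $M_r^{n}(W)$ and $\Gamma_r^{n}(W)$ exactly. This requires careful bookkeeping of measurability and of the null sets on which the identifications may fail, together with the check that the defining independence $\hat{\mathcal{F}}_j\perp\hat{\mathcal{F}}_j^+$ and the correspondence between $E[\,\cdot\,\vert\mathcal{F}_{n+j-\tau}^+\vee\mathcal{F}_n]$ and the unconditional backward projection survive the conditioning. The restriction $r>2$ and the precise shape of the constant are then simply inherited from the unconditional inequality, so no new analytic estimate is needed once the reduction is in place.
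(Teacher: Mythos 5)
Your proposal is correct and follows essentially the same route as the paper: the paper also freezes the past coordinates via the product structure of Assumption \ref{opa}, defining the measures $P^{\mathbf{e},n}$ and proving the substitution identities $E^{\mathbf{e},n}[Y]\vert_{\mathbf{e}=Z_n}=E[Y\vert\mathcal{F}_n]$ and $E^{\mathbf{e},n}[Y\vert\mathcal{F}_{n+t-\tau}^+]\vert_{\mathbf{e}=Z_n}=E[Y\vert\mathcal{F}_{n+t-\tau}^+\vee\mathcal{F}_n]$ (first on cylinder indicators, then by a monotone-class extension), so that $M_r^n(W)$ and $\Gamma_r^n(W)$ coincide with the unconditional mixing constants under the frozen measure and Theorems 1.1 and 5.1 of \cite{laci1} apply for $\tilde\mu$-almost every frozen past. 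The only cosmetic difference is that you phrase the identification through regular conditional probabilities while the paper constructs the family $P^{\mathbf{e},n}$ explicitly, which amounts to the same bookkeeping you flag in your final step.
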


The proof is reported in Section \ref{appi}.

\section{Fixed gain stochastic approximation}\label{harom}

Let $N \ge 1$ be an integer
and let $\mathbb{R}^N$ be the Euclidean space with norm $\vert x\vert:=\sqrt{\sum_{i=1}^N x_i^2}$, $x\in\mathbb{R}^N$. 
Let $D\subset\mathbb{R}^N$ be a bounded (nonempty) open set representing possible system parameters.
Let $H:D\times\mathbb{R}^m\to \mathbb{R}^N$ be a bounded measurable function.
We assume throughout this section that for all $t\in\mathbb{N}$, $\mathcal{F}_t=\sigma(\varepsilon_j,\ j\in\mathbb{N},\ j\leq t)$, $\mathcal{F}_t^+:=\sigma(\varepsilon_j,\ j>t)$
for some i.i.d. sequence $\varepsilon_j$, $j\in\mathbb{Z}$ with values in some Polish space $\mathcal{X}$, 
in particular the condition on the sigma algebras in the statement of Theorem \ref{estim} holds.

Let 
\begin{equation}\label{oi}
X_t:=g(\varepsilon_t,\varepsilon_{t-1},\ldots),\ t\in\mathbb{N},
\end{equation}
with some fixed measurable function $g:\mathcal{X}^{-\mathbb{N}}\to\mathbb{R}^m$. 
Clearly, $X$ is a (strongly) stationary $\mathbb{R}^m$-valued process, see Lemma 10.1 of 
\cite{kallenberg}.

\begin{remark}\label{talann}
{\rm We remark that, in the present setting, the CLC property holds if, for all $\theta_1,\theta_2\in D$,
\begin{equation*}\label{kacc}
E\left[ \vert H(\theta_1, X_{1})-H(\theta_2,X_1)\vert\, \Big|\mathcal{F}_0\right]\leq K|\theta_1-\theta_2|,\mbox{ a.s.},
\end{equation*}
due to the fact that the law of $(X_{k+1},\varepsilon_{k},\varepsilon_{k-1},\ldots)$
is the same as that of $(X_{1},\varepsilon_{0},\varepsilon_{-1},\ldots)$, for all $k\in\mathbb{Z}$.}
\end{remark}



Define $G(\theta):=EH(\theta,X_0)$. Note that, by stationarity of $X$, $G(\theta)=EH(\theta,X_t)$ for all
$t\in\mathbb{N}$. We need some stability hypotheses formulated in terms of an ordinary differential equation related to $G$.

\begin{assumption}\label{boun}
On $D$, the function $G$ is twice continuously differentiable and bounded, together with its first and second derivatives.
\end{assumption}

Fix $\lambda>0$. Under Assumption \ref{boun}, the equation
\begin{equation}\label{ode}
\dot{y}_t=\lambda G(y_t),\quad y_s=\xi,
\end{equation}
has a unique solution for each $s\geq 0$ and $\xi\in D$, on some (finite or infinite) interval
$[s,v(s,\xi))$ with $v(s,\xi)>s$. We will denote this solution by $y(t,s,\xi)$, $t\in [s,v(s,\xi))$.
Let $D_1 \subset D$ such that for all $\xi \in D_1$ we have $y(t,0,\xi) \in D$
for any $t \ge 0$.
We denote
$$\phi(D_1) = \{ u\in D :\, u=y(t,0,\xi), \text{ for some }t \ge 0,\ \xi \in D_1  \}.$$
The $\varepsilon$-neighbourhood of a set $D_1$ is denoted by $S(D_1,\varepsilon)$, i.e.
$$S(D_1,\varepsilon) = \{ u\in\mathbb{R}^N: |u - \theta| < \varepsilon \text{ for some } \theta \in D_1 \}.$$
We remark that, under Assumption \ref{boun}, the function $y(t,s,\xi)$ is continuously differentiable in $\xi$.

Notice that all the above observations would be true under weaker hypotheses than those of Assumption \ref{boun}.
However, the proof of Lemma \ref{expstab} below requires the full force of Assumption \ref{boun}, see \cite{laci3}.

\begin{assumption}\label{stab}
There exist open sets
$$
\emptyset\neq D_{\xi} \subset D_y \subset D_{\theta} \subset D_{\overline{y}} \subset D
$$
such that $\phi(D_{\xi}) \subset D_y$, $S(D_y,d) \subset D_{\theta}$ for some $d>0$ and
$\phi(D_{\theta})  \subset D_{\overline{y}}$, $S(D_{\overline{y}},d') \subset D$ for some $d'>0$.
The ordinary differential equation (\ref{ode}) is exponentially asymptotically stable with respect to initial perturbations, i.e.
there exist $C^* >0, \alpha >0$ such that, for each $\lambda$ sufficiently small, for all $0\leq s\leq t$, $\xi \in D$
\begin{equation}\label{gag}
\left| \frac{\partial}{\partial{\xi}} y(t,s,\xi) \right| \leq C^* e^{-\lambda \alpha(t-s)}.
\end{equation}
We furthermore assume that there is $\theta^*\in D$ such that
\begin{equation}\label{g}
G(\theta^*)=0.
\end{equation}
\end{assumption}

It follows from $\phi(D_{\xi})\subset D_y$ and \eqref{gag} that $\theta^*$ actually lies in the closure of $D_y$
and that there is only one $\theta^*$ satisfying \eqref{g}.

While Assumptions \ref{boun}, \ref{stab} pertained to a deterministic equation, our next hypothesis is
of a stochastic nature. 

\begin{assumption}\label{infsum}
For all $n\in\mathbb{N}$,
\begin{equation}\label{num}
E\left[\sup_{\vartheta\in D}\sum_{k=n}^{\infty} \left| E[H(\vartheta,X_{k+1})\vert\mathcal{F}_{n}] -G(\vartheta) \right|\right]<\infty
\end{equation}
\end{assumption}

\begin{remark}\label{frater}
{\rm Assumption \ref{infsum} expresses a certain kind of ``forgetting'': for $k$ large,
$E[H(\vartheta,X_{k+1})\vert\mathcal{F}_{n}]$ is close to  $G(\vartheta)=EH(\theta,X_{k+1})\vert_{\theta=\vartheta}$ in $L^1$, uniformly in $\vartheta$ and
the convergence is fast enough so that the sum in \eqref{num} is finite. In other words,
this is again a kind of mixing property.

In certain cases, the validity of Assumption \ref{infsum} indeed follows from $L$-mixing.
Let $X_t$, $t\in\mathbb{N}$ be $L$-mixing of order $1$  and let $x\to H(\theta,x)$
be Lipschitz-continuous with a Lipschitz constant $L^{\dagger}$ that is independent of $\theta$. 
We claim that Assumption \ref{infsum} holds under these conditions.
Indeed, for every $\vartheta\in D$
\begin{eqnarray*}
\sum_{k=n}^{\infty} E\left\vert E\left[ H(\vartheta,X_{k+1}) \vert\mathcal{F}_{n}\right]-
E[H(\theta,X_{k+1})]\vert_{\theta=\vartheta} \right\vert &\leq&\\
\sum_{k=n}^{\infty} \left\vert 
E\left[ H(\vartheta,X_{k+1}) \vert\mathcal{F}_{n}\right]-
E\left[ H(\vartheta,E[X_{k+1}\vert\mathcal{F}^+_{n}]) \vert\mathcal{F}_{n}\right] 
\right\vert &+&\\
\sum_{k=n}^{\infty} \left\vert E\left[ H(\theta,E[X_{k+1}\vert\mathcal{F}^+_{n}])\right]\vert_{\theta=\vartheta} -
E[H(\theta,X_{k+1})]\vert_{\theta=\vartheta} \right\vert &\leq&\\
2L^{\dagger} \sum_{k=n}^{\infty} \left\vert X_{k+1}- E[X_{k+1}\vert\mathcal{F}^+_{n}]\right\vert 
\end{eqnarray*}
noting that 
$$
E\left[ H(\vartheta,E[X_{k+1}\vert\mathcal{F}^+_{n}]) \vert\mathcal{F}_{n}\right]=
E\left[ H(\theta,E[X_{k+1}\vert\mathcal{F}^+_{n}])\right]\vert_{\theta=\vartheta},
$$
by independence of $\mathcal{F}_{n}$ and $\mathcal{F}_{n}^+$. Hence
$$
E\left[\sup_{\vartheta\in D}\sum_{k=n}^{\infty} \left| E[H(\vartheta,X_{k+1})\vert\mathcal{F}_{n}] -G(\vartheta) \right|\right]\leq 2L^{\dagger} \Gamma_1(X)<\infty.
$$
Assumption \ref{infsum} can also 
be verified in certain cases where $H$ is discontinuous, see Section \ref{negy}.}
\end{remark}

We now state the main result of our article.

\begin{theorem}\label{main}
Let $H(\theta,X_t)$ be UCLM-$(r,1)$ for some $r>2$, satisfying the CLC property (see \eqref{kac} above).
Let Assumptions \ref{boun}, \ref{stab}
and \ref{infsum} be in force. For some $\xi\in D_{\xi}$, define the recursive procedure
\begin{equation}\label{bab}
\theta_0:=\xi,\quad {\theta}_{t+1}={\theta}_t+\lambda H({\theta}_t,X_{t+1}),
\end{equation}
with some $\lambda>0$. Define also its ``averaged'' version,
\begin{equation}\label{averaged}
z_0:=\xi,\quad z_{t+1}=z_t+\lambda G(z_t).
\end{equation}
Let $d,d'$ in Assumption \ref{stab} be large enough and let $\lambda$ be small enough.
Then  $\theta_t,z_t\in D_{\theta}$ for all $t$ and there is a constant $C$,
independent of $t\in\mathbb{N}$ and
of $\lambda$, such that
$$
E\left\vert {\theta}_t-z_t\right\vert\leq C\lambda^{1/2},\ t\in\mathbb{N}.
$$
\end{theorem}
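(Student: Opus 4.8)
The plan is to compare the stochastic recursion \eqref{bab} with the deterministic recursion \eqref{averaged} by linearising the error $e_t:=\theta_t-z_t$ along the \emph{deterministic} averaged trajectory. Writing $A_s:=I+\lambda G'(z_s)$, a first-order Taylor expansion of $G$ around $z_s$ (legitimate by Assumption \ref{boun}) gives
\begin{equation*}
e_{t+1}=A_t e_t+\lambda R_t+\lambda\,\eta_{t+1},\qquad \eta_{t+1}:=H(\theta_t,X_{t+1})-G(\theta_t),
\end{equation*}
where the quadratic remainder satisfies $|R_t|\le\tfrac12\|G''\|_\infty|e_t|^2$. Crucially, the matrices $A_s$ are deterministic because $z_s$ is; introducing the resolvent $\Phi(t,s):=A_{t-1}\cdots A_s$ (with $\Phi(s,s):=I$), the discrete counterpart of the exponential stability \eqref{gag}, which I would first isolate as a lemma in the spirit of Lemma \ref{expstab}, yields $|\Phi(t,s)|\le C\,e^{-\lambda\alpha(t-s)}$ for $\lambda$ small. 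Before the quantitative estimate I would settle \emph{confinement} by running all bounds under the stopping time $\sigma:=\inf\{t:\theta_t\notin D_\theta\}$, proving the tracking bound for $t\wedge\sigma$, and then using that the resulting deviation is smaller than the margins $d,d'$ of Assumption \ref{stab} to force $\sigma=\infty$ for $d,d'$ large and $\lambda$ small; this simultaneously delivers $\theta_t,z_t\in D_\theta$.

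Unrolling from $e_0=0$ gives $e_t=\lambda\sum_{s=0}^{t-1}\Phi(t,s+1)R_s+\lambda\sum_{s=0}^{t-1}\Phi(t,s+1)\eta_{s+1}$, and I would split the noise into a martingale part and a bias part,
\begin{equation*}
\eta_{s+1}=\underbrace{\big(H(\theta_s,X_{s+1})-E[H(\theta_s,X_{s+1})\vert\mathcal{F}_s]\big)}_{U_{s+1}}+\underbrace{\big(E[H(\theta_s,X_{s+1})\vert\mathcal{F}_s]-G(\theta_s)\big)}_{\beta_s},
\end{equation*}
so that $E[U_{s+1}\vert\mathcal{F}_s]=0$. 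Since $\Phi(t,\cdot)$ is deterministic, the weights $b_s:=\lambda\Phi(t,s+1)$ are exactly the deterministic coefficients required by Theorem \ref{estim}. The remainder sum is a higher-order term: granting an a priori bound $\sup_s\|e_s\|_2\le C\sqrt\lambda$, one has $\|R_s\|_1=O(\lambda)$ and, using $\sum_s|\Phi(t,s+1)|\le C/(\lambda\alpha)$, the whole remainder sum is $O(\lambda)$, to be closed by a bootstrap/discrete Gronwall induction on $t$.

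The crux is the martingale sum $\lambda\sum_s\Phi(t,s+1)U_{s+1}$: the \emph{random} parameter $\theta_s$ sits inside the discontinuous $H$, so Theorem \ref{estim} cannot be applied directly. I would resolve this by a \emph{freezing/blocking} scheme: partition time into blocks of length $L\sim 1/\lambda$ and, on the block starting at $n$, replace $\theta_s$ by the block-initial value $\theta_n\in\Theta_n$. For the frozen increments $H(\theta_n,X_{s+1})-E[H(\theta_n,X_{s+1})\vert\mathcal{F}_n]$, which are conditionally L-mixing uniformly in the frozen value by the UCLM-$r$ hypothesis and are $\mathcal{F}_n$-conditionally mean-zero, Theorem \ref{estim} applies conditionally on $\mathcal{F}_n$, the bound being uniform in $\theta_n$ thanks to the $\mathrm{ess\,sup}_\theta$ in $M_r^n,\Gamma_r^n$. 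Summing the per-block Burkholder bounds against the weights $|\Phi(t,s+1)|^2$ and optimising over block starts produces a contribution of order $\sqrt\lambda$. The frozen bias $E[H(\theta_n,X_{s+1})\vert\mathcal{F}_n]-G(\theta_n)$ is summable over the block tail by Assumption \ref{infsum} (applied with $\vartheta=\theta_n$), contributing only $O(\lambda)$.

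The main obstacle is the \emph{freezing error} $U_{s+1}(\theta_s)-U_{s+1}(\theta_n)$: over a block of length $L\sim1/\lambda$ the drift $|\theta_s-\theta_n|\le\lambda L\,\|H\|_\infty$ is of order one, so a naive pathwise use of CLC \eqref{kac} is worthless. The resolution exploits that this freezing error is \emph{itself} a martingale difference sequence (a difference of two terms each mean-zero given $\mathcal{F}_s$) whose increments are controlled in conditional norm through CLC together with the boundedness of $H$. Estimating it with a Burkholder-type bound, again via Theorem \ref{estim} after checking that the difference field $H(\theta_s,\cdot)-H(\theta_n,\cdot)$ inherits conditional L-mixing with constants governed by the drift, rather than with a triangle inequality, recovers a square-root gain and brings the freezing error back to order $\sqrt\lambda$. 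Transferring the one-step, $L^1$-type control \eqref{kac} to the $L^r$ and L-mixing quantities demanded by Theorem \ref{estim} (interpolating against $\|H\|_\infty<\infty$) is where the delicate work concentrates, and is exactly the step that ordinary continuity cannot supply but CLC can. Assembling the $O(\sqrt\lambda)$ martingale contribution with the $O(\lambda)$ bias and remainder terms and closing the induction then yields $E|\theta_t-z_t|\le\|e_t\|_2\le C\sqrt\lambda$ uniformly in $t$.
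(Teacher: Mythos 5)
Your skeleton (blocking, freezing, Theorem \ref{estim} for the frozen fluctuation, Assumption \ref{infsum} for the frozen bias, CLC for parameter perturbations) matches the paper's toolkit, but at the step you yourself call the crux there is a genuine gap that the martingale trick does not close. Write, on a block started at $n$,
\begin{align*}
\eta_{s+1}&=\bigl(H(\theta_s,X_{s+1})-E[H(\theta_s,X_{s+1})\vert\mathcal{F}_s]\bigr)
+E\bigl[H(\theta_s,X_{s+1})-H(\theta_n,X_{s+1})\,\big\vert\,\mathcal{F}_s\bigr]\\
&\quad+\bigl(E[H(\theta_n,X_{s+1})\vert\mathcal{F}_s]-E[H(\theta_n,X_{s+1})\vert\mathcal{F}_n]\bigr)
+\bigl(E[H(\theta_n,X_{s+1})\vert\mathcal{F}_n]-G(\theta_n)\bigr)
+\bigl(G(\theta_n)-G(\theta_s)\bigr).
\end{align*}
The first term is a martingale difference sequence with increments bounded by $2\Vert H\Vert_\infty$, so plain $L^2$ orthogonality against your deterministic weights already gives $O(\sqrt{\lambda})$ --- no freezing or mixing is needed there; the third is exactly where Theorem \ref{estim} enters legitimately (via Lemma \ref{ken}, because $\theta_n$ \emph{is} $\mathcal{F}_n$-measurable); the fourth is Assumption \ref{infsum}. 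But however you arrange the centering, reaching the $\mathcal{F}_n$-anchored bias that Assumption \ref{infsum} controls forces the \emph{predictable} bridge terms (the second and fifth) into the decomposition, and these have no martingale structure at all: by CLC they are of pointwise size $K|\theta_s-\theta_n|\sim\lambda(s-n)$, and over your blocks of length $L\sim 1/\lambda$ they cumulate, after weighting by $\lambda$ and geometric damping, to $O(\lambda L)=O(1)$ --- the ``naive CLC loss'' you flagged simply resurfaces in the bias bookkeeping, where your square-root gain is unavailable. Your supporting claim that the difference field $H(\theta_s,\cdot)-H(\theta_n,\cdot)$ ``inherits conditional L-mixing'' is also unsupported: Lemma \ref{ken} requires the substituted parameter process to be $\mathcal{F}_n$-measurable, and for the merely adapted $\theta_s$ the coefficients $\gamma_r^{n}(\tau,\cdot)$ are not controlled by the $\mathrm{ess}\sup$ over fixed $\theta$, since $\theta_{n+t}$ depends on precisely the innovations that $\mathcal{F}^+_{n+t-\tau}\vee\mathcal{F}_n$ omits. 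The paper's resolution, which is the one idea missing from your plan, is to freeze along the \emph{restarted averaged flow} rather than at a constant: with $T=[1/(\lambda\alpha')]$ it sets $\overline{z}_t=z(t,nT,\theta_{nT})$, which is still $\mathcal{F}_{nT}$-measurable (so Lemma \ref{ken}, Theorem \ref{estim}, and Assumption \ref{infsum} with $\vartheta=\overline{z}_k\in\Theta_{nT}$ all apply), while the CLC bridge becomes $K\,E|\theta_k-\overline{z}_k|$, which a discrete Gronwall argument keeps at $O(\sqrt{\lambda})$ self-consistently; the global bound is then assembled by telescoping $\overline{z}_t-z_t$ through the block restarts using Lemma \ref{expstab}. (Constant freezing could at best be salvaged by shortening the blocks to $L\sim\lambda^{-1/2}$, balancing the $O(\lambda L)$ bridge against the $O(C_0/(\alpha L))$ accumulated frozen-bias cost, but that is not what you proposed.)

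Two further steps would fail as written. Confinement: the theorem asserts $\theta_t\in D_\theta$ for \emph{every} $t$, and a stopped $L^1$ or $L^2$ tracking bound of order $\sqrt{\lambda}$ cannot force $\sigma=\infty$ --- exits retain positive probability under moment bounds alone, and off $D$ the recursion is not even defined. The paper's Lemma \ref{matti} is purely pathwise: boundedness of $H$ and \eqref{gag} give $|\theta_t-y_t|\le 2C^*C^{\dagger}/\alpha$ deterministically, so $d>2C^*C^{\dagger}\alpha^{-1}$ confines every path; note $d$ must be taken \emph{large} and, unlike $d'$, cannot be shrunk with $\lambda$, which your margin argument implicitly assumes. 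Second, the linearisation $A_s=I+\lambda G'(z_s)$ along the single deterministic trajectory plus quadratic remainder does not close: your bootstrap hypothesis $\sup_s\Vert e_s\Vert_2\le C\sqrt{\lambda}$ is not producible by your own estimates, because the Assumption \ref{infsum} bias control is $L^1$ only; interpolating via $E|e_s|^2\le \mathrm{diam}(D)\,E|e_s|$ instead makes the remainder feed back into $E|e_t|$ at the same order $\sqrt{\lambda}$ with prefactor $\Vert G''\Vert\,\mathrm{diam}(D)/(2\alpha)$, so the induction closes only under a smallness condition of this type, which Assumptions \ref{boun} and \ref{stab} do not supply. The paper avoids quadratic remainders altogether by the exact flow-derivative identity (Lemma \ref{lem_dif} and the fundamental-theorem-of-calculus telescoping in its proof), exploiting that \eqref{gag} holds uniformly over all initial points $\xi\in D$, not merely along one trajectory.
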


An important consequence of the main theorem is provided as follows.

\begin{corollary}\label{karszt}
Under the conditions of Theorem \ref{main}, there is $t_0(\lambda)\in\mathbb{N}$ such that
$$
E\left\vert {\theta}_t-\theta^*\right\vert\leq C\lambda^{1/2},\ t\geq t_0(\lambda).
$$
Furthermore, $t_0(\lambda)\leq C^{\circ}\ln(1/\lambda)/\lambda$ for some $C^{\circ}>0$ .
\end{corollary}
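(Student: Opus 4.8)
The plan is to compare the random iterates $\theta_t$ not directly with $\theta^*$ but with the deterministic averaged sequence $z_t$ from \eqref{averaged}, whose behaviour is governed by the ordinary differential equation \eqref{ode}. By the triangle inequality,
\[
E|\theta_t-\theta^*|\le E|\theta_t-z_t|+|z_t-\theta^*|,
\]
and Theorem \ref{main} already bounds the first term by $C\lambda^{1/2}$, uniformly in $t$. Everything therefore reduces to the purely deterministic estimate $|z_t-\theta^*|\le C'\lambda^{1/2}$, valid for $t\ge t_0(\lambda)$. I would split this in turn through the exact flow $y(t,0,\xi)$ of \eqref{ode},
\[
|z_t-\theta^*|\le |z_t-y(t,0,\xi)|+|y(t,0,\xi)-\theta^*|,
\]
and treat the discretisation error and the convergence of the flow separately.

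For the second piece I would exploit that $\theta^*$ is itself an equilibrium of \eqref{ode} by \eqref{g}, so that $y(t,0,\theta^*)=\theta^*$ for all $t$. Writing $y(t,0,\xi)-\theta^*=y(t,0,\xi)-y(t,0,\theta^*)$ as the integral of $\partial_\xi y(t,0,\cdot)$ along the segment joining $\theta^*$ to $\xi$ and invoking the exponential stability bound \eqref{gag} gives
\[
|y(t,0,\xi)-\theta^*|\le C^* e^{-\lambda\alpha t}|\xi-\theta^*|\le C^* e^{-\lambda\alpha t}\,\mathrm{diam}(D).
\]
Requiring the right-hand side to be at most $\lambda^{1/2}$ forces $\lambda\alpha t\ge \tfrac12\ln(1/\lambda)+\mathrm{const}$, which is exactly the source of the threshold $t_0(\lambda)\le C^{\circ}\ln(1/\lambda)/\lambda$.

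The discretisation error is the substantive step. Recognising $z_t$ as the unit-step Euler scheme for \eqref{ode}, the one-step (local truncation) error is $y(s+1,s,z_s)-z_{s+1}=\lambda\int_s^{s+1}[G(y_u)-G(z_s)]\,du=O(\lambda^2)$, since $G$ is Lipschitz by Assumption \ref{boun} and the flow moves only $O(\lambda)$ per unit of time. I would then telescope along the flow, writing $z_t-y(t,0,\xi)=\sum_{s=0}^{t-1}\bigl[\,y(t,s+1,z_{s+1})-y(t,s+1,y(s+1,s,z_s))\,\bigr]$ via the semigroup property, and bound each summand by $\sup_\zeta|\partial_\xi y(t,s+1,\zeta)|$ times the local error, using \eqref{gag}. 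This produces
\[
|z_t-y(t,0,\xi)|\le \sum_{s=0}^{t-1}C^* e^{-\lambda\alpha(t-s-1)}\,O(\lambda^2)\le \frac{C^*\,O(\lambda^2)}{1-e^{-\lambda\alpha}}=O(\lambda),
\]
uniformly in $t$. Combining the three estimates yields $E|\theta_t-\theta^*|\le C\lambda^{1/2}+O(\lambda)+\lambda^{1/2}\le C''\lambda^{1/2}$ for $t\ge t_0(\lambda)$, as required.

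The main obstacle is precisely this uniform-in-$t$ discretisation bound: a naive discrete Gronwall estimate would accumulate the local errors into a factor $(1+\lambda L)^t$ (with $L$ a Lipschitz constant of $G$), which over the horizon $t_0\sim\ln(1/\lambda)/\lambda$ blows up like $\lambda^{-L/\alpha}$. The exponential contraction \eqref{gag} must be fed into the telescoping sum to convert this into a convergent geometric series with sum $O(1/\lambda)$. A secondary technicality is to ensure that all iterates $z_s$ and the segments along which \eqref{gag} is applied remain inside $D$, where \eqref{gag} is valid; this is guaranteed for $\lambda$ small by the nested-neighbourhood structure of Assumption \ref{stab} together with the conclusion $z_t\in D_{\theta}$ already furnished by Theorem \ref{main}.
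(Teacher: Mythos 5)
Your proof is correct, and the skeleton (triangle inequality through $z_t$, Theorem \ref{main} for the stochastic part, exponential decay giving $t_0(\lambda)\sim\ln(1/\lambda)/\lambda$) matches the paper's. Where you genuinely diverge is in the deterministic step: the paper never compares $z_t$ with the continuous flow at all. It applies the fundamental theorem of calculus once, directly to the \emph{discrete} flow $\xi\mapsto z(t,0,\xi)$, using the bound \eqref{tok} of Lemma \ref{expstab} (the discrete analogue of \eqref{gag}, imported from \cite{laci3}) together with the fact that $\theta^*$ is a fixed point of the recursion \eqref{averaged}; this makes the whole corollary a three-line argument, since Lemma \ref{expstab} is already on hand from the proof of Theorem \ref{main}. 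You instead split $z_t-\theta^*$ through the ODE flow $y(t,0,\xi)$, use \eqref{gag} for $|y(t,0,\xi)-\theta^*|$, and then prove a uniform-in-$t$ Euler discretisation bound $|z_t-y(t,0,\xi)|=O(\lambda)$ by telescoping along the flow with the local error $O(\lambda^2)$ weighted by the contraction $e^{-\lambda\alpha(t-s-1)}$ — and your diagnosis is exactly right that feeding \eqref{gag} into the telescoping sum, rather than a naive Gronwall bound, is what prevents the $(1+\lambda L)^t$ blow-up over horizons of length $\ln(1/\lambda)/\lambda$. What your route buys is self-containedness: you only use the stability hypothesis \eqref{gag} as stated in Assumption \ref{stab}, effectively re-deriving the content that Lemma \ref{expstab} packages (and which is proved in \cite{laci3} by arguments of the same flavour); what it costs is the extra discretisation analysis. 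One shared technicality: both your argument and the paper's apply a derivative bound along the straight segment joining $\xi$ (or $z_0$) to $\theta^*$, which tacitly requires that segment to lie in the region where \eqref{gag} (resp. \eqref{tok}) is valid — the paper glosses over this as well, so it is not a gap specific to your write-up, and your closing remark about the nested neighbourhoods of Assumption \ref{stab} handles the analogous issue for the points arising in your telescoping sum.
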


The proofs of Theorem \ref{main} and Corollary \ref{karszt} are postponed to Section \ref{appi}.

\begin{remark}{\rm
Our current investigations were motivated by \cite{laci3} where not only the random field $H(\theta,X_t)$
was assumed $L$-mixing,
but also its ``derivative field'' 
\begin{equation}\label{dervi}
\frac{H(\theta_1,X_t)-H(\theta_2,X_t)}{\theta_1-\theta_2},\ t\in\mathbb{N},\ \theta_1,\theta_2\in D,\
\theta_1\neq\theta_2.
\end{equation}
As shown in Section 3 of \cite{laci1}, the latter hypothesis necessarily implies the continuity (in $\theta$)
of $H(\theta,X_t)$. For our purposes such an assumption is thus too strong. We are
able to drop continuity at the price of modifying the $L$-mixing concept, as explained in Section \ref{ketto} above.

We point out that our results complement those of \cite{laci3} even in the case where $H$ is Lipschitz-continuous (in that case the CLC property of our paper obviously holds). 
In \cite{laci3}, the derivative field \eqref{dervi} was assumed to be $L$-mixing. In the present paper
we do not need this hypothesis (but we assume conditional $L$-mixing of order $(r,1)$ for some $r>2$ instead of $L$-mixing).}
\end{remark}

\section{Examples}\label{negy}

The present section serves to illustrate the power of Theorem \ref{main} above by exhibiting processes $X_t$ and
functions $H$ to which that theorem applies.

The (conditional) $L$-mixing property can be verified
for arbitrary bounded measurable functionals of Markov processes with the Doeblin condition (see \cite{gm}) and 
this could probably be extended to a larger family of Markov processes using ideas of \cite{bw} or \cite{hm}. 
We prefer not to review the corresponding methods here but to present some non-Markovian examples because they
demonstrate better the advantages of our approach over the existing literature.

In Subsection \ref{lini} linear processes (see e.g. Subsection 3.2 of \cite{giraitis}) with polynomial
autocorrelation decay are considered, while Subsection \ref{mark} presents a class of
Markov chains in a random environment with contractive properties.

\subsection{Causal linear processes}\label{lini}

\begin{assumption}\label{linear} 
Let $\varepsilon_j$, $j\in\mathbb{Z}$ be a sequence of independent, identically distributed real-valued
random variables such that $E|\varepsilon_0|^{\zeta}<\infty$ for some $\zeta\geq 2$ and $E\varepsilon_0=0$. 
We set 
$\mathcal{F}_n = \sigma(\varepsilon_i, i\le n)$, and $\mathcal{F}^+_n = \sigma(\varepsilon_i, i>n)$ for each $n \in \mathbb{Z}$. Let
us define the process
\begin{equation}\label{majk}
X_t:=\sum_{j=0}^{\infty} a_j \varepsilon_{t-j},\quad t\in\mathbb{Z},
\end{equation}
where $a_j\in\mathbb{R}$, $j\in\mathbb{N}$. We assume $a_0\neq 0$ and
$$
|a_j|\leq C_1 (j+1)^{-\beta},\ j\in\mathbb{N},
$$
for some constants $C_1>0$ and $\beta>1/2$.
\end{assumption}

Note that the series \eqref{majk} converges a.s. (by Kolmogorov's theorem, see e.g. Chapter 4
of \cite{kallenberg}). 
As a warm-up, we now check the
conditional $L$-mixing property for $X$.

\begin{lemma}\label{x}
Let Assumption \ref{linear} be in force. If $\beta>3/2$ 
then the process $X_t$, $t\in\mathbb{N}$ is conditionally $L$-mixing of order $(\zeta,1)$.
\end{lemma}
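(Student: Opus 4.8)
The plan is to verify directly the three defining requirements of a conditionally L-mixing process of order $\zeta$ (recall that here $D$ is a singleton): $L^\zeta$-boundedness, adaptedness, and $L^1$-boundedness of the sequences $M_\zeta^n(X)$ and $\Gamma_\zeta^n(X)$. Adaptedness of $X_t$ to $\mathcal{F}_t$ is immediate from the causal representation \eqref{majk}. For $L^\zeta$-boundedness, stationarity reduces matters to $\|X_0\|_\zeta$, and since $E\varepsilon_0=0$ the triangle inequality in $L^\zeta$ gives $\|X_0\|_\zeta\le\|\varepsilon_0\|_\zeta\sum_{j\ge 0}|a_j|$, finite because $\beta>1$ forces $\sum_j(j+1)^{-\beta}<\infty$. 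The crux of the whole argument is the behaviour under the conditional expectations, and the unifying device is to split the defining sum of $X_{n+t}$ according to whether the noise index $n+t-j$ exceeds $n$ (in which case $\varepsilon_{n+t-j}$ is independent of $\mathcal{F}_n$ and centred) or not (in which case it is $\mathcal{F}_n$-measurable).

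For $M_\zeta^n(X)$ I would write $X_{n+t}=U_t+V_t$ with $U_t:=\sum_{j=0}^{t-1}a_j\varepsilon_{n+t-j}$ (independent of $\mathcal{F}_n$) and $V_t:=\sum_{j\ge t}a_j\varepsilon_{n+t-j}$ ($\mathcal{F}_n$-measurable). Conditional Minkowski then yields
$$
E^{1/\zeta}\!\left[|X_{n+t}|^\zeta\mid\mathcal{F}_n\right]\le\|U_t\|_\zeta+|V_t|,
$$
where $\|U_t\|_\zeta\le\|\varepsilon_0\|_\zeta\sum_{j\ge 0}|a_j|$ uniformly in $t,n$, while reindexing $V_t$ and using $|a_{n+t-i}|\le C_1(n-i+1)^{-\beta}$ for $t\ge 0$ gives $\sup_t|V_t|\le C_1\sum_{k\ge 0}(k+1)^{-\beta}|\varepsilon_{n-k}|$. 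Taking expectations, $E[M_\zeta^n(X)]\le\|\varepsilon_0\|_\zeta\sum_j|a_j|+C_1 E|\varepsilon_0|\sum_k(k+1)^{-\beta}$, which is finite and independent of $n$ as soon as $\beta>1$.

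The decisive step is $\Gamma_\zeta^n(X)$. Here the conditioning field $\mathcal{F}_{n+t-\tau}^+\vee\mathcal{F}_n$ is generated by the $\varepsilon_i$ with $i>n+t-\tau$ or $i\le n$; the only noise terms in $X_{n+t}$ that are neither measurable with respect to it nor pinned down are exactly those with $\tau\le j<t$, and since these are centred and independent of that field I expect
$$
X_{n+t}-E\!\left[X_{n+t}\mid\mathcal{F}_{n+t-\tau}^+\vee\mathcal{F}_n\right]=\sum_{\tau\le j<t}a_j\varepsilon_{n+t-j}.
$$
As each such $\varepsilon_{n+t-j}$ has index $>n$, it is independent of $\mathcal{F}_n$, so the conditional $L^\zeta$-norm coincides with the deterministic unconditional one, and the triangle inequality gives $\gamma_\zeta^n(\tau,X)\le\|\varepsilon_0\|_\zeta\sum_{j\ge\tau}|a_j|$. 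Summing over $\tau$ and interchanging the order of summation turns this into $\Gamma_\zeta^n(X)\le\|\varepsilon_0\|_\zeta\sum_{j\ge 1}j\,|a_j|\le C_1\|\varepsilon_0\|_\zeta\sum_{j\ge 1}j(j+1)^{-\beta}$, which converges precisely when $\beta>2$.

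The feature that makes the $L^1$-boundedness automatic is that both $\gamma_\zeta^n(\tau,X)$ and $\Gamma_\zeta^n(X)$ are dominated by deterministic constants, so no further integrability work is needed; the only genuinely random bound, that for $M_\zeta^n(X)$, is linear in the $|\varepsilon_{n-k}|$ and hence trivially in $L^1$. The main obstacle is therefore not any hard inequality but the careful bookkeeping of which noise coordinates are $\mathcal{F}_n$-measurable, which are independent of the conditioning $\sigma$-field, and which are ``in between''; getting the three index ranges right is what produces the clean middle block above. I note in passing that replacing the triangle inequality by a Marcinkiewicz--Zygmund estimate would give $\gamma_\zeta^n(\tau,X)\lesssim(\sum_{j\ge\tau}a_j^2)^{1/2}$ and hence convergence already for $\beta>3/2$, but the cruder bound suffices for the stated threshold $\beta>2$ and keeps the argument elementary.
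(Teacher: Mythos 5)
Your proposal is correct and follows essentially the same route as the paper: the same three-block split of the noise indices (recent independent terms, $\mathcal{F}_n$-measurable old terms, and the middle block $\tau\le j<t$ whose centred terms are exactly what survives in $X_{n+t}-E[X_{n+t}\mid\mathcal{F}_{n+t-\tau}^+\vee\mathcal{F}_n]$), the same triangle-inequality bounds yielding a deterministic $\gamma_\zeta^n(\tau,X)\le\Vert\varepsilon_0\Vert_\zeta\sum_{j\ge\tau}|a_j|$, and the same conclusion that $\beta>2$ makes $\sum_\tau\gamma_\zeta^n(\tau,X)$ finite. The only (harmless) differences are that you treat general $n$ directly where the paper reduces to $n=0$ by stationarity, and your bound $|a_{t+k}|\le C_1(k+1)^{-\beta}$ for the $\mathcal{F}_n$-measurable part is slightly sharper than the paper's tail-sum bound $\sum_{i\ge k}|a_i|\lesssim(k+1)^{-\beta+1}$, so your $M_\zeta^n$ estimate needs only $\beta>1$.
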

\begin{proof} We have, for 
$t\in\mathbb{N}$,
\begin{eqnarray*}
E^{1/\zeta}[|X_t|^{\zeta}\vert\mathcal{F}_0] &\leq&
E^{1/\zeta}\left[ 2^{\zeta-1}|\sum_{j=0}^{t-1}  a_j\varepsilon_{t-j}|^{\zeta}\big\vert\mathcal{F}_0\right]+
E^{1/\zeta}\left[ 2^{\zeta-1}|\sum_{j=t}^{\infty}  a_j\varepsilon_{t-j}|^{\zeta} \big\vert\mathcal{F}_0\right]\\ &\leq&
2^{\frac{\zeta-1}{\zeta}}\sum_{j=0}^{t-1}  |a_j|\Vert\varepsilon_{t-j}\Vert_{\zeta}+
2^{\frac{\zeta-1}{\zeta}}\sum_{j=t}^{\infty}  |a_j\varepsilon_{t-j}| 
\end{eqnarray*}
using the simple inequality $(x+y)^{\zeta}\leq 2^{\zeta-1}(x^{\zeta}+y^{\zeta})$, $x,y\geq 0$;
properties of the norm $\Vert\cdot\Vert_{\zeta}$; independence of $\varepsilon_j$, $j\geq 1$ from $\mathcal{F}_0$
and $\mathcal{F}_0$-measurability of $\varepsilon_j$, $j\leq 0$. Hence
\begin{eqnarray*}
E^{1/\zeta}[|X_t|^{\zeta}\vert\mathcal{F}_0] &\leq&
2^{\frac{\zeta-1}{\zeta}}\Vert\varepsilon_{0}\Vert_{\zeta}\sum_{j=0}^{\infty}  |a_j| +
2^{\frac{\zeta-1}{\zeta}}\sum_{j=0}^{\infty} C_1(t+j+1)^{-\beta}|\varepsilon_{-j}|\\ &\leq& 
2^{\frac{\zeta-1}{\zeta}}\Vert\varepsilon_{0}\Vert_{\zeta}\sum_{j=0}^{\infty}  |a_j| +
2^{\frac{\zeta-1}{\zeta}}\sum_{j=0}^{\infty} C_1(j+1)^{-\beta}|\varepsilon_{-j}|\\
&\leq& C_2 \left[1+\sum_{j=0}^{\infty} |\varepsilon_{-j}|(j+1)^{-\beta}\right],
\end{eqnarray*}
for some $C_2>0$. Note that the latter bound is independent of $t$.
Similar estimates prove that, for all $n\geq 0$, 
$$
M^n_{\zeta}(X)\leq C_2 \left[1+\sum_{j=0}^{\infty} |\varepsilon_{n-j}|(j+1)^{-\beta}\right].
$$
The right-hand side has the same law for all $n$ and it is in $L^1$ since $\beta>1$.
This implies that the sequence $M^n_{\zeta}(X)$, $n\in\mathbb{N}$ is bounded 
in $L^{1}$.

For $1\leq m$ and for any $t\in\mathbb{Z}$,
define $$
X_{t,m}^+:=\sum_{j=0}^{m-1} a_j\varepsilon_{t-j},
$$
and, for $t\geq m$, let
$$
X^{\circ}_{t,m}:=X_{t,m}^+ +\sum_{j=t}^{\infty} a_j\varepsilon_{t-j}.
$$
Notice that $E[X_t\vert\mathcal{F}_{t-m}^+\vee\mathcal{F}_{0}]=X_{t,m}^{\circ}$ and, by independence
of $\varepsilon_j$, $j\geq 1$ from $\mathcal{F}_0$,
\begin{eqnarray}\label{ghjk}
E[|X_t-X_{t,m}^{\circ}|^{\zeta} \big\vert\mathcal{F}_0]=\left\Vert\sum_{j=m}^{t-1} a_j\varepsilon_{t-j}\right\Vert_{\zeta}^{\zeta}
&\leq&\\ \nonumber
C_3 E\left(\sum_{j=m}^{t-1} a^2_j\varepsilon^2_{t-j}\right)^{\zeta/2}
&\leq&\\ \nonumber
C_3\left(\sum_{j=m}^{t-1} \Vert a^2_j\varepsilon^2_{t-j}\Vert_{\zeta/2}
\right)^{\zeta/2}
&\leq&\\ \nonumber
C_3\left(C_1^2 \sum_{j=m}^{\infty} (1+j)^{-2\beta} \Vert \varepsilon_{0}\Vert_{\zeta}^2\right)^{\zeta/2}
\leq \left(C_3'
m^{-2\beta+1}\right)^{\zeta/2},
\end{eqnarray}
with some constants $C_3,C_3'>0$, using the Marczinkiewicz-Zygmund inequality. 
Define $b_m:=\sqrt{C_3'}m^{-\beta+1/2}$. An analogous estimate gives $\gamma^n_{\zeta}(m,X)\leq b_m$ for
all $n\in\mathbb{N}$. Since $\sum_{m=1}^{\infty} b_m<\infty$ by $\beta>3/2$, $\Gamma^n_{\zeta}(X)$ is actually bounded by a constant, uniformly in $n$.
\end{proof}

We also need in the sequel that the law of the driving noise is smooth enough.
This is formulated in terms of the characteristic function $\phi$ of $\varepsilon_0$.

\begin{assumption}\label{smooth}
We require that
\begin{equation}\label{smoothie}
\int_{\mathbb{R}} |\phi(u)|\, du<\infty.
\end{equation}
\end{assumption}

\begin{remark}\label{sm}
{\rm Assumption \ref{smooth} implies the existence of a (continuous and bounded) density $f$ for the
law of $\varepsilon_0$ (with respect to the Lebesgue measure). Indeed, $f$ is the inverse
Fourier transform of $\phi$:
$$
f(x)=\frac{1}{2\pi}\int_{\mathbb{R}} \phi(u)e^{-iux}\, du,\ x\in\mathbb{R}.
$$
Conversely, if the law of $\varepsilon_0$ has a
twice continuously differentiable density $f$ such
that $f'$, $f''$ are integrable over $\mathbb{R}$ then \eqref{smoothie} holds. The latter observation
follows by standard Fourier-analytic arguments.}
\end{remark}

\begin{lemma}\label{tak} Let Assumptions \ref{linear} and \ref{smooth} be in force.
Then the law of $X_0$ (resp. $X^+_{0,m}$) has a density $f_{\infty}$ (resp. $f_m$) with respect to the Lebesgue measure.
Moreover, there is a constant $\tilde{K}>0$ such that
$$
\sup_{m\in\mathbb{N}\cup\{\infty\}}\sup_{x\in\mathbb{R}} f_m(x)\leq \tilde{K}.
$$
\end{lemma}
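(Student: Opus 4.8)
The plan is to compute the characteristic functions of $X^+_{0,m}$ and $X_0$ explicitly and then apply the Fourier inversion principle already recalled in Remark \ref{sm}: an integrable characteristic function forces the existence of a bounded continuous density given by the inverse Fourier transform. Writing $\varphi_m$ and $\varphi_\infty$ for the characteristic functions of $X^+_{0,m}=\sum_{j=0}^{m-1}a_j\varepsilon_{-j}$ and of $X_0=\sum_{j=0}^{\infty}a_j\varepsilon_{-j}$ respectively, the independence of the $\varepsilon_{-j}$ together with Assumption \ref{linear} gives, with $\phi$ the characteristic function of $\varepsilon_0$ from Assumption \ref{smooth},
$$
\varphi_m(u)=\prod_{j=0}^{m-1}\phi(a_j u),\qquad \varphi_\infty(u)=E[e^{iuX_0}]=\prod_{j=0}^{\infty}\phi(a_j u),\quad u\in\mathbb{R}.
$$
The product representation of $\varphi_\infty$ is obtained by letting $m\to\infty$: since the partial sums $X^+_{0,m}$ converge a.s.\ to $X_0$ (the series defining $X_0$ converges a.s., as noted after Assumption \ref{linear}), the bounded convergence theorem yields $\varphi_m(u)\to\varphi_\infty(u)$ for every fixed $u$.

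The crucial step is a domination that is uniform in $m$. Because $\phi$ is a characteristic function we have $|\phi(a_j u)|\le 1$ for every $j$, so discarding all but the $j=0$ factor gives, for every $m\in\mathbb{N}\cup\{\infty\}$,
$$
|\varphi_m(u)|\le |\phi(a_0 u)|,\quad u\in\mathbb{R}.
$$
Here the hypothesis $a_0\neq 0$ is used decisively: the change of variables $v=a_0 u$ gives $\int_{\mathbb{R}}|\phi(a_0 u)|\,du=|a_0|^{-1}\int_{\mathbb{R}}|\phi(v)|\,dv$, which is finite by Assumption \ref{smooth}. Consequently every $\varphi_m$, and also $\varphi_\infty$, lies in $L^1(\mathbb{R})$, and $\int_{\mathbb{R}}|\varphi_m(u)|\,du$ is bounded by the single constant $|a_0|^{-1}\int_{\mathbb{R}}|\phi(v)|\,dv$, independently of $m$.

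It then remains to invoke Fourier inversion exactly as in Remark \ref{sm}, applied now to $X^+_{0,m}$ and $X_0$ rather than to $\varepsilon_0$: integrability of $\varphi_m$ (resp.\ $\varphi_\infty$) implies that $X^+_{0,m}$ (resp.\ $X_0$) has a bounded continuous density $f_m(x)=\frac{1}{2\pi}\int_{\mathbb{R}}\varphi_m(u)e^{-iux}\,du$ (resp.\ $f_\infty$). Taking absolute values inside the integral yields
$$
\sup_{x\in\mathbb{R}}f_m(x)\le \frac{1}{2\pi}\int_{\mathbb{R}}|\varphi_m(u)|\,du\le \frac{1}{2\pi|a_0|}\int_{\mathbb{R}}|\phi(v)|\,dv=:\tilde{K},
$$
and since $\tilde{K}$ does not depend on $m$ this is precisely the claimed uniform bound, valid for $m\in\mathbb{N}$ and for $m=\infty$.

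I do not expect any serious obstacle here; the argument is essentially a one-line domination once the characteristic functions are written as products. The only points needing a little care are the passage to the limit giving $\varphi_\infty$ (handled by a.s.\ convergence of the partial sums and bounded convergence) and the citation of the inversion theorem in its bounded-continuous-density form. It is worth remarking that this lemma does not use the polynomial decay rate $\beta$ at all (unlike Lemma \ref{x}): only $a_0\neq 0$ and Assumption \ref{smooth} enter, the single factor $\phi(a_0 u)$ doing all the work of dominating the whole product.
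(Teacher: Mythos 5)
Your proof is correct and follows essentially the same route as the paper's: the product representation of the characteristic functions, the key domination $|\varphi_m(u)|\leq|\phi(a_0u)|$ obtained by discarding all factors but $j=0$ (using $a_0\neq 0$ for integrability), and Fourier inversion to extract bounded densities with a bound uniform in $m$, with the $m=\infty$ case handled via pointwise convergence of $\varphi_m$ to $\varphi_\infty$. The only cosmetic difference is that the paper additionally records the pointwise convergence $f_m(x)\to f_\infty(x)$ by dominated convergence, which your argument bypasses by applying the inversion theorem directly to the dominated $\varphi_\infty$.
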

\begin{proof} Denote by $\phi_m$ the characteristic function of 
$X^+_{0,m}$. Since $|\phi(u)|\leq 1$ for all $u$, we see that
 \begin{equation}\label{ackroyd}
 \vert \phi_m(u)\vert=\left\vert \prod_{j=0}^{m-1} \phi(a_j u)\right\vert\leq |\phi(a_0 u)|,
 \end{equation}
 which implies, by applying an inverse Fourier transform, the existence of $f_m$ and the estimate
 $$
 |f_m(x)|\leq \frac{1}{2\pi} \int_{\mathbb{R}}|\phi(a_0 u)|\, du=:\tilde{K}<\infty,\mbox{ for all }x\in\mathbb{R},\ m\geq 1,
 $$
 by Assumption \ref{smooth}. As $X_{0,m}^+$ tends to $X_0$ in probability when $m\to\infty$,
 $\phi_m(u)$ tends to $\phi_{\infty}(u)$ for all $u$, where $\phi_{\infty}$ is the characteristic function of $X_0$.
 The integrable bound \eqref{ackroyd} is uniform in $m$, so $f_{\infty}$ exists and 
 the dominated convergence theorem implies that $f_m(x)$ tends to $f_{\infty}(x)$, for all $x\in\mathbb{R}$.
 The result follows.
\end{proof}

Let $D\subset\mathbb{R}^N$ be a bounded open set. In the sequel we consider functionals of the form
\begin{equation}\label{szek}
H(\theta,x):=\sum_{j=1}^M g_j(\theta,x) 1_{\{x\in I_j(\theta)\}},\ x\in\mathbb{R},\ \theta\in D,
\end{equation}
where the $g_j$ are bounded and Lipschitz-continuous functions (jointly in the two
variables) and the intervals $I_j(\theta)$ are of the form
$(-\infty,h_j(\theta))$,
$(h_j(\theta),\infty)$ or $(h^1_j(\theta),h^2_j(\theta))$ with
$h_j,h^1_j,h^2_j:D\to \mathbb{R}$ Lipschitz-continuous functions.

\begin{remark}
{\rm The intervals $I_j(\theta)$ can also be
closed or half-closed and the
results below remain valid, this is clear from the proofs. In the one-dimensional case, the signed regressor,
signed error, sign-sign and Kohonen algorithms all have an updating function of the form \eqref{szek}, see \cite{e}, \cite{e2}, 
\cite{kohonen1}, \cite{kohonen}.
For simplicity,
we only treat the one-dimensional setting (i.e. $x\in\mathbb{R}$) in the present paper but we allow $D$ to
be multidimensional.}
\end{remark}

\begin{lemma}\label{lem_CLC} Let Assumptions \ref{linear} and \ref{smooth} be in force. 
Then a random field $H(\theta,X_t)$,
$t\in\mathbb{N}$, $\theta\in D$ as in \eqref{szek} satisfies
the CLC property \eqref{kac}.
\end{lemma}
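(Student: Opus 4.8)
The plan is to reduce the estimate to a single conditioning step and then to neutralise the discontinuities of the indicators by exploiting that the conditional law of $X_1$ has a bounded density. By Remark \ref{talann}, which rests on the stationarity built into \eqref{oi}, it suffices to produce a deterministic $K>0$ with
\[
E\left[\,\vert H(\theta_1,X_1)-H(\theta_2,X_1)\vert\,\big|\,\mathcal{F}_0\right]\leq K\vert\theta_1-\theta_2\vert\quad\text{a.s.}
\]
for all $\theta_1,\theta_2\in D$. The crucial preliminary observation is that $X_1=a_0\varepsilon_1+Y$, where $Y:=\sum_{j\geq 1}a_j\varepsilon_{1-j}$ is $\mathcal{F}_0$-measurable while $\varepsilon_1$ is independent of $\mathcal{F}_0$. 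Since $a_0\neq 0$ by Assumption \ref{linear} and $\varepsilon_1$ has a bounded density $f$ under Assumption \ref{smooth} (see Remark \ref{sm}), the conditional law of $X_1$ given $\mathcal{F}_0$ is, on the event $\{Y=y\}$, the density $x\mapsto \frac{1}{\vert a_0\vert}f\!\left(\frac{x-y}{a_0}\right)$, which is bounded a.s. by the deterministic constant $K_0:=\sup_z f(z)/\vert a_0\vert<\infty$.

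Next I would split, for each $j$, the difference between the two summands of \eqref{szek} by adding and subtracting $g_j(\theta_2,x)1_{\{x\in I_j(\theta_1)\}}$, namely
\[
g_j(\theta_1,x)1_{\{x\in I_j(\theta_1)\}}-g_j(\theta_2,x)1_{\{x\in I_j(\theta_2)\}}
=[g_j(\theta_1,x)-g_j(\theta_2,x)]1_{\{x\in I_j(\theta_1)\}}+g_j(\theta_2,x)\big[1_{\{x\in I_j(\theta_1)\}}-1_{\{x\in I_j(\theta_2)\}}\big].
\]
The first (Lipschitz) term is dominated pointwise by $L_{g_j}\vert\theta_1-\theta_2\vert$ thanks to the joint Lipschitz continuity of $g_j$, and its contribution is harmless after taking conditional expectations. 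The second (jump) term is bounded in absolute value by $\Vert g_j\Vert_\infty\,1_{\{X_1\in I_j(\theta_1)\triangle I_j(\theta_2)\}}$, using boundedness of $g_j$.

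The jump term is the main obstacle, and it is precisely here that the smoothness hypothesis enters. Taking conditional expectations and invoking the bounded conditional density gives
\[
E\left[1_{\{X_1\in I_j(\theta_1)\triangle I_j(\theta_2)\}}\,\big|\,\mathcal{F}_0\right]\leq K_0\,\mathrm{Leb}\big(I_j(\theta_1)\triangle I_j(\theta_2)\big)\quad\text{a.s.}
\]
Because each endpoint $h_j,h_j^1,h_j^2$ is Lipschitz in $\theta$, the symmetric difference of the two intervals is contained in at most two intervals whose total Lebesgue measure is at most a Lipschitz constant times $\vert\theta_1-\theta_2\vert$; hence the right-hand side is bounded by $C_j\vert\theta_1-\theta_2\vert$. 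Summing the Lipschitz and jump contributions over $j=1,\dots,M$ assembles a single deterministic constant $K$ and yields the claim. (I note that Lemma \ref{tak} is not strictly needed for this argument, the boundedness of $f$ itself already supplying the conditional density bound; Lemma \ref{tak} is rather useful for verifying the UCLM-$r$ property and Assumption \ref{infsum} for the field \eqref{szek}.)
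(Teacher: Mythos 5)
Your proof is correct and follows essentially the same route as the paper's: the same reduction via Remark \ref{talann}, the same add-and-subtract decomposition into a Lipschitz term and an indicator jump term, and the same key step of bounding the conditional probability that $X_1$ lands near a cut point using the bounded density $x\mapsto\frac{1}{\vert a_0\vert}f(x/a_0)$ of $a_0\varepsilon_1$ given the $\mathcal{F}_0$-measurable tail $\sum_{j\geq 1}a_j\varepsilon_{1-j}$. Your symmetric-difference phrasing is a mild tidying of the paper's explicit two half-open intervals, and your parenthetical observation that Lemma \ref{tak} is not needed here (only the bound on $f$ from Remark \ref{sm}) matches the paper's proof exactly.
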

\begin{proof}
It suffices to consider $H(\theta,X_1)= g(\theta,X_1)1_{\{X_1 \in I(\theta)\}}$ 
with $g$ Lipschitz-continuous, bounded and
$I$ of the form $(-\infty,h(\theta))$, $(h(\theta),\infty)$ or $(h^1(\theta),h^2(\theta))$ with $h,h^1,h^2$ Lipschitz.
We only prove the first case, the other cases being similar. Recall also Remark \ref{talann}.

Denoting by $C_4$ a Lipschitz-constant for $g$ and by $C_5$ an upper bound for $|g|$, we get the estimate
\begin{eqnarray*}
|H(\theta_1,X_1)-H(\theta_2,X_1)| &\leq&\\
|1_{\{X_1< h(\theta_1)\}} g(\theta_1,X_1)
-1_{\{X_1< h(\theta_1)\}} g(\theta_2,X_1) | &+&\\
|1_{\{X_1< h(\theta_1)\}} g(\theta_2,X_1)-
1_{\{X_1< h(\theta_2)\}} g(\theta_2,X_1)| &\leq&\\ C_4 |\theta_1-\theta_2| + C_5
| 1_{\{X_1< h(\theta_1)\}}-1_{\{X_1< h(\theta_2)\}} | &\leq&\\
C_4 |\theta_1-\theta_2| + C_5\left(
1_{\{X_1\in [h(\theta_1),h(\theta_2))\}} + 1_{\{X_1\in [h(\theta_2),h(\theta_1))\}}\right). & &
\end{eqnarray*}
We may and will assume $h(\theta_1)<h(\theta_2)$. It suffices to prove that
$$
P\left(X_1\in [h(\theta_1),h(\theta_2))\vert\mathcal{F}_0\right)\leq C_{6} |\theta_1-\theta_2|
$$
with a suitable $C_6>0$. Noting that the density of $a_0 \varepsilon_1$ is $x\to (1/\vert a_0\vert) f(x/a_0)$, we have
\begin{eqnarray*}
P\left(X_1\in [h(\theta_1),h(\theta_2))\vert\mathcal{F}_0\right) &=&
\int_{h(\theta_1)-\sum_{j=1}^{\infty} a_j\varepsilon_{1-j}}^{h(\theta_2)-\sum_{j=1}^{\infty} a_j\varepsilon_{1-j}}
\frac{1}{\vert a_0\vert}{f}(x/a_0)\, dx \leq\\
\frac{1}{\vert a_0\vert}K_0 | h(\theta_1)-h(\theta_2)| &\leq& \frac{1}{\vert a_0\vert}K_0 C_7 |\theta_1-\theta_2|, 
\end{eqnarray*}
where $K_0$ is an upper bound for $f$ (see Remark \ref{sm}) and
$C_7$ is a Lipschitz constant for $h$. This completes the proof.
\end{proof}

\begin{theorem}\label{exa} Let Assumptions \ref{linear} and \ref{smooth} be in force. Let $H$ be of the
form specified in \eqref{szek}.
Let $\zeta\geq 2r$ and let $\beta$ satisfy $\beta>4r+1/2$.
Then the random field $H(\theta,X_t)$, $t\in\mathbb{N}$, $\theta\in D$ is UCLM-$(r,1)$.
\end{theorem}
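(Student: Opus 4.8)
The plan is to verify, one by one, the three defining requirements of the UCLM-$r$ property for $H(\theta,X_t)$. Boundedness in $L^r$ and adaptedness are immediate: every $g_j$ in \eqref{szek} is bounded, hence $H$ is bounded, so $M_r^n(H)\le\Vert H\Vert_\infty$ is a deterministic constant (in particular $M_r^n(H)$, $n\in\mathbb{N}$, is bounded in $L^1$), while $X_t$ is $\mathcal{F}_t$-measurable by \eqref{majk}. All the work therefore lies in showing that $\Gamma^n_r(H)$, $n\in\mathbb{N}$, is bounded in $L^1$. By the triangle inequality applied to the finite sum in \eqref{szek} it suffices to control $\gamma^n_r(\tau,\cdot)$ for a single summand, which (as in the proof of Lemma \ref{lem_CLC}) I take to be $H(\theta,x)=g(\theta,x)1_{\{x<h(\theta)\}}$ with $g$ bounded and Lipschitz and $h$ Lipschitz; the other interval types are handled analogously.

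Fix $n$, $\tau\ge 1$ and $t\ge\tau$, and abbreviate $\mathcal{G}:=\mathcal{F}^+_{n+t-\tau}\vee\mathcal{F}_n$. Exactly as in Lemma \ref{x}, since the ``middle'' innovations $\varepsilon_{n+t-j}$ with $\tau\le j<t$ are centred and independent of $\mathcal{G}$, one has $\hat X_{n+t}:=E[X_{n+t}\mid\mathcal{G}]=\sum_{j=0}^{\tau-1}a_j\varepsilon_{n+t-j}+\sum_{j=t}^{\infty}a_j\varepsilon_{n+t-j}$, and the discarded part $\Delta:=X_{n+t}-\hat X_{n+t}=\sum_{j=\tau}^{t-1}a_j\varepsilon_{n+t-j}$ is independent of $\mathcal{F}_n$ and obeys, as in \eqref{ghjk}, $\Vert\Delta\Vert_\zeta\le\Vert\varepsilon_0\Vert_\zeta\sum_{j\ge\tau}|a_j|\le C\tau^{1-\beta}$. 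Because conditional expectation given $\mathcal{G}$ is an $L^r(\cdot\mid\mathcal{F}_n)$-contraction (recall $\mathcal{F}_n\subset\mathcal{G}$) and $H(\theta,\hat X_{n+t})$ is $\mathcal{G}$-measurable, I obtain the reduction $\gamma^n_r(\tau,H)\le 2\,\mathrm{ess}\sup_{\theta}\sup_{t\ge\tau}E^{1/r}\big[|H(\theta,X_{n+t})-H(\theta,\hat X_{n+t})|^r\,\big|\,\mathcal{F}_n\big]$, so it remains to estimate the right-hand side.

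As in the proof of Lemma \ref{lem_CLC}, I split $|H(\theta,X_{n+t})-H(\theta,\hat X_{n+t})|\le C|\Delta|+C\,1_{\{|\hat X_{n+t}-h(\theta)|\le|\Delta|\}}$, the first term coming from the Lipschitz dependence of $g$ on $x$ and the second bounding the jump of the indicator. The Lipschitz term contributes at most $C\Vert\Delta\Vert_\zeta\le C\tau^{1-\beta}$ (using $\zeta\ge r$). The indicator term is the crux, and it is precisely here that the discontinuity of $H$ is tamed: the ``fresh'' block $\sum_{j=0}^{\tau-1}a_j\varepsilon_{n+t-j}$ of $\hat X_{n+t}$ only uses innovation indices in $(n+t-\tau,n+t]$, which are disjoint from the indices in $(n,n+t-\tau]$ entering $\Delta$; hence, conditionally on $\mathcal{F}_n\vee\sigma(\Delta)$, the variable $\hat X_{n+t}$ is a constant plus a fresh block distributed as $X^+_{0,\tau}$, so it still possesses a density bounded by the constant $\tilde K$ of Lemma \ref{tak}, the term $a_0\varepsilon_{n+t}$ (with $a_0\ne0$, present because $\tau\ge1$) guaranteeing the characteristic-function bound \eqref{ackroyd}. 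Consequently $E\big[1_{\{|\hat X_{n+t}-h(\theta)|\le|\Delta|\}}\,\big|\,\mathcal{F}_n\big]\le 2\tilde K\,E[|\Delta|\,|\,\mathcal{F}_n]=2\tilde K\,E|\Delta|\le C\tau^{1-\beta}$, uniformly in $\theta$ and $t$.

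Collecting the two contributions yields $E\big[|H(\theta,X_{n+t})-H(\theta,\hat X_{n+t})|^r\,\big|\,\mathcal{F}_n\big]\le C\tau^{1-\beta}$ with a deterministic constant, whence $\gamma^n_r(\tau,H)\le C\tau^{(1-\beta)/r}$ deterministically and uniformly in $n$. Summing over $\tau$ gives a finite, $n$-independent bound for $\Gamma^n_r(H)$ as soon as $(\beta-1)/r>1$, which is amply implied by $\beta>4r+1$ (and $\zeta\ge 2r$ more than covers the moment requirements used above). Thus $\Gamma^n_r(H)$ is deterministically bounded, in particular bounded in $L^1$, and the three requirements of UCLM-$r$ are met. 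I expect the main obstacle to be the indicator term: the idea is to convert the jump of the discontinuous $H$ into a small-ball probability for $\hat X_{n+t}$ and then to exploit the independence of $\Delta$ from the bounded-density fresh block so that this probability is controlled by $E|\Delta|$.
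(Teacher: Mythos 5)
Your proof is correct, and it takes a genuinely different route from the paper's -- in fact a sharper one. The paper factors $H$ as $g(\theta,X_t)\cdot 1_{\{X_t\in I(\theta)\}}$ and invokes the product inequalities \eqref{betlehem1}--\eqref{betlehem2} of Remark \ref{kony1}, so it must prove the indicator field is UCLM-$2r$ (with $g(\theta,X_t)$ UCLM-$2r$ via Lemma \ref{x} and Remark \ref{kony1}, which is where $\zeta\ge 2r$ is consumed); for the indicator it compares $1_{\{X^{\circ}_{t,m}<h(\theta)\}}$ with $1_{\{X_t<h(\theta)\}}$ by introducing a deterministic threshold $\eta_m$, bounding a small-ball probability of $X_t$ (conditional density $\le\tilde K$ by Lemma \ref{tak}) by $2\tilde K\eta_m$ and the overshoot by the conditional Markov inequality, then optimizing $\eta_m=m^{-(\beta-1)/2}$; the resulting decay $m^{-(\beta-1)/(4r)}$ (a square-root loss from the Markov step compounded with the $L^{2r}$ exponent) is what forces $\beta>4r+1$. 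You instead estimate $H$ in one piece, using the factor-$2$ reduction (the conditional part of Lemma \ref{mall}) against the explicit candidate $\hat X_{n+t}=E[X_{n+t}\,\vert\,\mathcal{F}^+_{n+t-\tau}\vee\mathcal{F}_n]$ (valid since $E\varepsilon_0=0$), split off the Lipschitz part, and -- the key move -- handle the indicator jump by conditioning on $\mathcal{F}_n\vee\sigma(\Delta)$: the fresh block $\sum_{j=0}^{\tau-1}a_j\varepsilon_{n+t-j}$ is independent of $(\Delta,\mathcal{F}_n)$ and has density $f_\tau\le\tilde K$ by Lemma \ref{tak} (here $a_0\neq 0$ and $\tau\ge 1$ matter, exactly as you say), so the jump probability is at most $2\tilde K\,E|\Delta|\le C\tau^{1-\beta}$ with no Markov inequality and no threshold to optimize; the $r$-th power is harmless since the indicator is idempotent and $E[|\Delta|^r\vert\mathcal{F}_n]\le C\tau^{r(1-\beta)}\le C\tau^{1-\beta}$. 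This yields the deterministic bound $\gamma^n_r(\tau,H)\le C\tau^{(1-\beta)/r}$, so your argument actually needs only $\beta>r+1$ (and little more than $\zeta\ge r$) -- recovering, and even slightly strengthening, the improvement the paper obtains in Remark \ref{strife}, which there requires $\varepsilon_0$ to have moments of all orders, whereas your indicator estimate uses only $E|\Delta|$ rather than high conditional moments of $\Delta$. Under the stated hypotheses $\beta>4r+1$, $\zeta\ge 2r$, your bounds hold a fortiori, and the remaining steps (adaptedness, $M^n_r(H)$ bounded by $\Vert H\Vert_\infty$, the triangle inequality over the $M$ summands of \eqref{szek}, uniformity in $\theta$ making the essential suprema deterministic) all check out.
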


\begin{proof} We may and will assume $$
H(\theta,X_t)=g(\theta,X_t)1_{\{X_t\in I(\theta)\}}
$$ 
with some bounded
Lipschitz function $g$ and with some interval $I(\theta)$ of the type as in \eqref{szek}. 
As $H$ is bounded, ${M}_r^n(H)$, $n\in\mathbb{N}$ is trivially
a bounded sequence in $L^1$.

In view of \eqref{betlehem1}, \eqref{betlehem2}, it suffices to establish that $1_{\{X_t\in I(\theta)\}}$,
$t\in\mathbb{N}$ is UCLM-$(2r,1)$ (since $g(\theta,X_t)$, $1_{\{X_t\in I(\theta)\}}$ are bounded and
$g(\theta,X_t)$ is UCLM-$(2r,1)$, by $\zeta\geq 2r$, Lemma \ref{x} and 
Remark \ref{kony1}).
We show this for $I(\theta)=(-\infty,h(\theta))$ with $h$ Lipschitz-continuous
as other types of intervals can be handled similarly.

As $$
\mathrm{Law}(X_{t+n},\, t\in\mathbb{N},\, \varepsilon_{n-j},\, j\in\mathbb{N})=
\mathrm{Law}(X_{t},\, t\in\mathbb{N},\, \varepsilon_{-j},\, j\in\mathbb{N})
$$ 
for all $n\in\mathbb{N}$, we may reduce the proof to estimations for the case $n:=0$.
Let us start with 
\begin{eqnarray*}
\left| 1_{\{ X_{t,m}^{\circ}<h(\theta)\}}-1_{\{ X_t<h(\theta)\}}\right| &=&\\
1_{\{ X_{t,m}^{\circ}<h(\theta),X_t\geq h(\theta)\}}+1_{\{ X_t<h(\theta),X_{t,m}^{\circ}\geq h(\theta)\}} &\leq&\\
1_{\{ X_{t}\in (h(\theta)-\eta_m,h(\theta)+\eta_m)\}}+
1_{\{ |X_{t,m}^{\circ}-X_t|\geq\eta_m\}},
\end{eqnarray*}
for all $\eta_m>0$. We will choose a suitable $\eta_m$ later. 
Using Lemma \ref{tak} and the conditional Markov inequality we obtain
\begin{eqnarray}\nonumber
E\left[| 1_{\{ X_{t,m}^{\circ}<h(\theta)\}}-1_{\{ X_t<h(\theta)\}}|^{2r}\vert\mathcal{F}_0\right] &\leq&\\ \nonumber C_8
P(X_{t}\in (h(\theta)-\eta_m,h(\theta)+\eta_m)\vert\mathcal{F}_0) &+&\\
\nonumber C_8 P(|X_{t,m}^{\circ}-X_t|\geq\eta_m\vert\mathcal{F}_0) &\leq&\\
2C_8 \tilde{K}\eta_m + C_8 E\left[|X_t-X_{t,m}^{\circ}|\big\vert\mathcal{F}_0\right]/\eta_m,& &\label{marco}
\end{eqnarray}
with some constant $C_8$, noting that powers of indicators are themselves indicators and that the conditional density of
$X_t$ with respect to $\mathcal{F}_0$ is $x\to f_t(x-\sum_{j=t}^{\infty}a_j \varepsilon_{t-j})$ and the latter is 
$\leq \tilde{K}$ by Lemma \ref{tak}. Using (\ref{ghjk}), the second term in
\eqref{marco} is bounded by 
$C_8 \sqrt{C_3'} m^{-\beta+1/2}/\eta_m$ hence
it is reasonable to choose $\eta_m:=1/m^{(\beta-1/2)/2}$, which leads to
$$
E^{1/2r}\left[\vert 1_{\{ X_{t,m}^{\circ}<h(\theta)\}}-1_{\{ X_t<h(\theta)\}}\vert^{2r}\big\vert\mathcal{F}_0\right]\leq C_9/m^{(\beta-1/2)/(4r)},
$$
with some $C_9>0$. Notice that $X_{t,m}^{\circ}$ is $\mathcal{F}^+_{t-m}\vee\mathcal{F}_0$-measurable.
Lemma \ref{mall} implies that $\gamma_{2r}(X,m)\leq 2C_9/m^{(\beta-1/2)/(4r)}$. 
As $(\beta-1/2)/(4r)>1$ by our hypotheses, we obtain the UCLM-$(2r,1)$ property
for $1_{\{X_t\in I(\theta)\}}$.
\end{proof}

\begin{remark}\label{strife}{\rm When $\varepsilon_0$ has
moments of all orders then one can reduce the lower bound $4r+1/2$ for $\beta$ in Theorem \ref{exa} to $r+1/2$.
Indeed, in this case $g(\theta,X_t)$ is UCLM-$(q,1)$ for arbitrarily large $q$ by Lemma \ref{x} and Remark \ref{kony1} 
so it suffices to show the UCLM-$(r',1)$ property for $1_{\{ X_t\in I(\theta)\}}$ 
for some $r'>r$ that can be arbitrarily close to $r$ (and not for $r'=2r$ as in Theorem \ref{exa}). The estimate of the above proof can
be improved to
\begin{eqnarray*}
E\left[| 1_{\{ X_{t,m}^{\circ}<h(\theta)\}}-1_{\{ X_t<h(\theta)\}}|^{r'}\vert\mathcal{F}_0\right] &\leq&\\
2C_8 \tilde{K}\eta_m + C_8 E\left[|X_t-X_{t,m}^{\circ}|^q\big\vert\mathcal{F}_0\right]/\eta_m^q,& &
\end{eqnarray*}
for arbitrarily large $q$. Choosing $\eta_m:=1/m^{[q(\beta-1/2)]/(q+1)}$,
we arrive at
$$
E^{1/r'}\left[\vert 1_{\{ X_{t,m}^{\circ}<h(\theta)\}}-1_{\{ X_t<h(\theta)\}}\vert^{r'}\big\vert\mathcal{F}_0\right]
\leq C_9/m^{[q(\beta-1/2)]/[(q+1)r']}.
$$
Let $\beta>r+1/2$. If $r'>r$ 
is chosen close enough to $r$ and $q$ is chosen large enough then $[q(\beta-1/2)]/[(q+1)r']>1$ which shows the
UCLM-$(r,1)$ property for $H(\theta,X_t)$.}
\end{remark}

\begin{lemma}\label{mindet} Let Assumptions \ref{linear} and \ref{smooth} be in force, let $\beta>3/2$. Then, for all 
$n\in\mathbb{N}$,
$$
E\left[\sum_{k=n}^{\infty} \sup_{\vartheta\in D}
\left\vert E\left[H(\vartheta,X_{k+1})\vert\mathcal{F}_n\right]-G(\vartheta)\right\vert \right] \leq C_{10}
$$
with some fixed $C_{10}<\infty$. That is, Assumption \ref{infsum} holds.
\end{lemma}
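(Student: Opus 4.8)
The plan is to follow the ``forgetting'' decomposition of Remark \ref{frater}, replacing the Lipschitz-in-$x$ estimate used there (unavailable here, since $H$ jumps in $x$) by the indicator-straddling argument already employed in the proof of Theorem \ref{exa}. By linearity it suffices to treat a single summand of \eqref{szek}, say $H(\theta,x)=g(\theta,x)1_{\{x<h(\theta)\}}$ with $g$ bounded and Lipschitz and $h$ Lipschitz; and, by the shift-invariance of the law recalled in Remark \ref{talann}, it is enough to bound the $k$-th summand by a quantity depending only on $k-n$, so that summing over $k\geq n$ produces a constant $C_{10}$ that is automatically independent of $n$. For $k\geq n$ I introduce the ``forgotten'' variable $Y_{k+1}:=E[X_{k+1}\vert\mathcal{F}_n^+]=\sum_{j=0}^{k-n}a_j\varepsilon_{k+1-j}$; it is $\mathcal{F}_n^+$-measurable, hence independent of $\mathcal{F}_n$, it has the same law as $X^+_{0,k-n+1}$, and therefore its density is bounded by $\tilde{K}$ by Lemma \ref{tak}. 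Since $X_{k+1}-Y_{k+1}$ is $\mathcal{F}_n$-measurable, the conditional density of $X_{k+1}$ given $\mathcal{F}_n$ is a shift of $f_{k-n+1}$, still bounded by $\tilde{K}$.

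Using independence of $Y_{k+1}$ from $\mathcal{F}_n$ (so that $E[H(\vartheta,Y_{k+1})\vert\mathcal{F}_n]=E[H(\theta,Y_{k+1})]\big\vert_{\theta=\vartheta}$) and stationarity (so that $G(\vartheta)=E[H(\theta,X_{k+1})]\big\vert_{\theta=\vartheta}$), I would split
\begin{align*}
E[H(\vartheta,X_{k+1})\vert\mathcal{F}_n]-G(\vartheta)
&=\big(E[H(\vartheta,X_{k+1})\vert\mathcal{F}_n]-E[H(\vartheta,Y_{k+1})\vert\mathcal{F}_n]\big)\\
&\quad+\big(E[H(\theta,Y_{k+1})]-E[H(\theta,X_{k+1})]\big)\big\vert_{\theta=\vartheta}.
\end{align*}
Both terms are controlled, uniformly in $\vartheta\in\Theta_n$, once $E|H(\theta,X_{k+1})-H(\theta,Y_{k+1})|$ and its conditional analogue are bounded uniformly in $\theta\in D$, exactly as in Remark \ref{frater} but now for a discontinuous $H$.

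To bound the difference I would use $|H(\theta,X_{k+1})-H(\theta,Y_{k+1})|\leq C_4|X_{k+1}-Y_{k+1}|+C_5|1_{\{X_{k+1}<h(\theta)\}}-1_{\{Y_{k+1}<h(\theta)\}}|$, where $C_4$ is a Lipschitz constant and $C_5$ a bound for $g$. The first term contributes $C_4\|X_{k+1}-Y_{k+1}\|_1$. For the jump term I would insert, for a parameter $\eta>0$ to be chosen, the pointwise bound $1_{\{X_{k+1}\in(h(\theta)-\eta,h(\theta)+\eta)\}}+1_{\{|X_{k+1}-Y_{k+1}|\geq\eta\}}$ as in Theorem \ref{exa}; the (conditional or unconditional) expectation of the first indicator is at most $2\tilde{K}\eta$ by the density bound, while that of the second, after applying Markov's inequality with power $2$ and taking expectations, is at most $E|X_{k+1}-Y_{k+1}|^2/\eta^2$.

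The decisive point for reaching the sharp threshold $\beta>2$ is to estimate $X_{k+1}-Y_{k+1}=\sum_{j\geq k-n+1}a_j\varepsilon_{k+1-j}$ through orthogonality rather than the triangle inequality: since the $\varepsilon_i$ are centred and independent, $E|X_{k+1}-Y_{k+1}|^2=\sigma^2\sum_{j\geq k-n+1}a_j^2\leq C(k-n+1)^{-(2\beta-1)}$ with $\sigma^2:=E\varepsilon_0^2$, which is sharper than the bound $(k-n+1)^{-2(\beta-1)}$ coming from $\|\cdot\|_1$. Balancing the two pieces with $\eta:=(k-n+1)^{-(2\beta-1)/3}$ yields a $k$-th summand bounded, uniformly in $\vartheta\in\Theta_n$, by $C(k-n+1)^{-(2\beta-1)/3}+C(k-n+1)^{-(\beta-1)}$; both exponents exceed $1$ precisely when $\beta>2$, so the sum over $k\geq n$ converges to a finite $C_{10}$ independent of $n$. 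I expect the main obstacle to be exactly this optimisation: the naive $\|\cdot\|_1$ estimate of $X_{k+1}-Y_{k+1}$ together with the discontinuity forces $\beta>3$, and only the second-moment (orthogonality) bound combined with the Chebyshev truncation at level $\eta$ recovers the stated condition $\beta>2$.
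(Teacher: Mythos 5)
Your proof is correct, and its skeleton---reduction to a single summand of \eqref{szek}, the Remark \ref{frater}-type splitting around the $\mathcal{F}_n^+$-measurable truncation (your $Y_{k+1}$ is the paper's $X^+_{0,k}$ after the paper's shift to time $0$ with conditioning on $\mathcal{F}_{-k}$), the freezing of $\vartheta$ by independence, and the density bound of Lemma \ref{tak}---is exactly the paper's. Where you genuinely diverge is the indicator term, and there your closing claim is mistaken: the naive $L^1$ bound does \emph{not} force $\beta>3$ here, and the paper reaches $\beta>2$ using first moments only. The key is the very observation you make and then abandon: $\Delta:=X_{k+1}-Y_{k+1}=\sum_{j\geq k-n+1}a_j\varepsilon_{k+1-j}$ is $\mathcal{F}_n$-measurable, hence conditionally on $\mathcal{F}_n$ it is a constant, and the indicators $1_{\{X_{k+1}<h(\vartheta)\}}$ and $1_{\{Y_{k+1}<h(\vartheta)\}}$ differ precisely when $X_{k+1}$ falls in an interval around $h(\vartheta)$ of conditionally deterministic width $|\Delta|$; the conditional density bound $\tilde{K}$ then gives $E\left[\left|1_{\{X_{k+1}<h(\vartheta)\}}-1_{\{Y_{k+1}<h(\vartheta)\}}\right|\,\big\vert\,\mathcal{F}_n\right]\leq 2\tilde{K}|\Delta|$ directly, with no straddle parameter $\eta$ and no Chebyshev step---this is \eqref{hora} in the paper---and $E|\Delta|\leq \Vert\varepsilon_0\Vert_1\sum_{j\geq k-n+1}|a_j|\leq C(k-n+1)^{-(\beta-1)}$ is already summable precisely for $\beta>2$. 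The $\eta$-straddle plus Markov device is genuinely needed in Theorem \ref{exa}, where the discrepancy $X_t-X^{\circ}_{t,m}$ involves noise \emph{not} measurable with respect to the conditioning sigma-field $\mathcal{F}_0$ (whence the much stronger requirement $\beta>4r+1$ there); in Lemma \ref{mindet} the discrepancy lies entirely in the past, so the device is superfluous. Your orthogonality-plus-Chebyshev variant is nonetheless valid: the balance $\eta=(k-n+1)^{-(2\beta-1)/3}$ does yield a summand of order $(k-n+1)^{-(2\beta-1)/3}+(k-n+1)^{-(\beta-1)}$, summable exactly when $\beta>2$, and the uniformity in $\vartheta\in\Theta_n$ holds since all your bounds are deterministic functions of $k-n$. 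So your route works, but it takes a detour (and invokes $E\varepsilon_0^2<\infty$, which Assumption \ref{linear} grants anyway) to recover a threshold that the shorter conditional-constant-shift argument already delivers.
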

\begin{proof}
We need to estimate
$$
\sum_{k=n}^{\infty} \left[ \left\vert E\left[g(\vartheta,X_{k+1})1_{\{X_{k+1}<h(\vartheta)\}}\vert\mathcal{F}_n\right]-E[g(\theta, X_{k+1})1_{\{X_{k+1}<h(\theta)\}}]\vert_{\theta=\vartheta} 
\right\vert\right],
$$
where $h:D\to\mathbb{R}$ is Lipschitz-continuous and $g$ is a bounded, Lipschitz-continuous function with a bound $C_{11}$ 
for $\vert g\vert$ and
with Lipschitz constant $C_{12}$. It suffices to prove
$$
E\left[\sup_{\vartheta\in D}\sum_{k=1}^{\infty} \left\vert E\left[g(\vartheta,X_{0})1_{\{X_{0}<h(\vartheta)\}}\vert\mathcal{F}_{-k}\right]-E[g(\theta,X_0)1_{\{X_{0}<h(\theta)\}}]\vert_{\theta=\vartheta} 
\right\vert\right] <\infty,
$$
since the law of $(X_0,\varepsilon_{-k},\varepsilon_{-k-1},\ldots)$ equals that of
$(X_{n+k},\varepsilon_n,\varepsilon_{n-1},\ldots)$, for all $k\geq 1$, $n\in\mathbb{Z}$.
We can estimate a given term in the above series as follows:
\begin{eqnarray}\nonumber
\left\vert E\left[g(\vartheta, X_{0})1_{\{X_{0}<h(\vartheta)\}}\vert\mathcal{F}_{-k}\right]-
E[g(\theta,X_0)1_{\{X_{0}<h(\theta)\}}]\vert_{\theta=\vartheta}\right\vert   &\leq&\\
\left|E\left[g(\vartheta,X_{0})1_{\{X_{0}<h(\vartheta)\}}\vert\mathcal{F}_{-k}\right]-
E[g(\vartheta,X_{0,k}^+)1_{\{ X_{0,k}^+<h(\vartheta)\}}\vert\mathcal{F}_{-k}]\right|  &+&\nonumber\\
\left|E[g(\theta,X_{0,k}^+)1_{\{X_{0,k}^+<h(\theta)\}}]\vert_{\theta=\vartheta}-
E[g(\theta,X_0)1_{\{X_{0}<h(\theta)\}}]\vert_{\theta=\vartheta}\right|  &\leq& \nonumber\\
\nonumber C_{12} E\left[|X_0-X_{0,k}^+|\Big\vert\mathcal{F}_{-k}\right] +C_{11}E\left[
|1_{\{ X_{0,k}^+<h(\vartheta)\}}-1_{\{ X_0<h(\vartheta)\}}\vert\Big\vert\mathcal{F}_{-k}\right] &+&\\
\label{buksza} C_{12}|X_0-X_{0,k}^+| +C_{11}
E\left[ |1_{\{ X_{0,k}^+<h(\theta)\}}-1_{\{ X_0<h(\theta)\}}|\right]\vert_{\theta=\vartheta}  & &
\end{eqnarray}
noting that $E[g(\theta,X_{0,k}^+)1_{\{X_{0,k}^+<h(\theta)\}}]\vert_{\theta=\vartheta}=
E[g(\vartheta,X_{0,k}^+)1_{\{X_{0,k}^+<h(\vartheta)\}}\vert\mathcal{F}_{-k}]$.
The first and third terms on the right-hand side of \eqref{buksza} are equal and they are $\leq C_{13} k^{-\beta+1/2}$
with some $C_{13}>0$, by the proof of Lemma \ref{x}, hence their sum (when $k$ goes from $1$ to infinity) is finite. 
The expression in the second term of \eqref{buksza} can be estimated as 
\begin{eqnarray}\nonumber
E\left[|1_{\{ X_{0,k}^+<h(\vartheta)\}}-1_{\{ X_0<h(\vartheta)\}}\vert\Big\vert\mathcal{F}_{-k}\right] &\leq&\\
\nonumber P\left(X_0\in \left(h(\vartheta)-\sum_{j=k}^{\infty}a_j\varepsilon_{-j},h(\vartheta)+\sum_{j=k}^{\infty}a_j\varepsilon_{-j}\right)\vert\mathcal{F}_{-k}\right)
&\leq&\\
2\tilde{K}\left\vert\sum_{j=k}^{\infty}a_j\varepsilon_{-j}\right\vert,\label{hora}
\end{eqnarray}
noting that 
the conditional density of $X_0$ with respect to $\mathcal{F}_{-k}$ is
$x\to f_k\left(x-\sum_{j=k}^{\infty}a_j\varepsilon_{-j}\right)$ and this is bounded
by $\tilde{K}$, using Lemma \ref{tak}. Since \eqref{hora} is independent of $\vartheta$, a 
similar estimate guarantees that
$$
E\left[ |1_{\{ X_{0,k}^+<h(\theta)\}}-1_{\{ X_0<h(\theta)\}}|\right]\vert_{\theta=\vartheta}\leq 2\tilde{K}\left\vert\sum_{j=k}^{\infty}a_j\varepsilon_{-j}\right\vert.
$$
Note that the upper estimates obtained so far do not depend on $\vartheta$.
It follows that, even taking supremum in $\vartheta\in D$, the expectations of the second and fourth terms
on the right-hand side of \eqref{buksza} are both $\leq C_{14} k^{(-\beta+1/2)}$
with some $C_{14}>0$. As $\beta>3/2$, the infinite sum of these terms is finite, too, finishing the
proof of the present lemma.
\end{proof}

\begin{assumption}\label{call}
Let $f$ satisfy
\begin{equation}\label{krudy}
\vert f(x)\vert\leq \widehat{C} e^{-\widehat{\delta} |x|}\mbox{ for all }x\in\mathbb{R},
\end{equation}
with some $\widehat{C},\widehat{\delta}>0$
\end{assumption}

\begin{assumption}\label{vall}
Let
\begin{equation}\label{ko}
\int_{\mathbb{R}} u^2\vert\phi_u\vert \, du<\infty
\end{equation}
hold.
\end{assumption}

\begin{remark}\label{prodigue} {\rm Clearly, Assumption \ref{call} implies that $\varepsilon_0$ has finite moments of all orders.
Note also that Assumption \ref{vall} implies Assumption \ref{smooth}.}
\end{remark}

\begin{remark}{\rm If $f$ is four times continuously
differentiable such that $f'$, $f''$, $f'''$, $f''''$ are integrable then \eqref{ko} holds, compare to Remark \ref{sm}
above.}
\end{remark}

\begin{lemma} Let Assumptions \ref{linear}, \ref{call} and \ref{vall} hold.
Let the functions $g_i$, $h^1_i$, $h^2_i$, $h_i$ of \eqref{szek} be twice continuously 
differentiable with bounded first and second
derivatives.
Then $G(\theta):=EH(\theta,X_0)$
is bounded and twice continuously differentiable with bounded first and second
derivatives, i.e. Assumption \ref{boun} holds.
\end{lemma}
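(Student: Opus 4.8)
The plan is to exploit the smoothing effect of the density of $X_0$: although $\theta\mapsto H(\theta,x)$ is discontinuous through the indicators $1_{\{x\in I_j(\theta)\}}$, integration against a smooth density restores $C^2$ regularity. By linearity of the expectation it suffices to treat a single summand of \eqref{szek}, and among the interval types I would focus on $I(\theta)=(-\infty,h(\theta))$, the cases $(h(\theta),\infty)$ and $(h^1(\theta),h^2(\theta))$ being entirely analogous (the last producing two boundary contributions). Thus the object to analyse is
$$
G_1(\theta)=E\left[ g(\theta,X_0)1_{\{X_0<h(\theta)\}}\right]=\int_{-\infty}^{h(\theta)} g(\theta,x)\,f_\infty(x)\,dx,
$$
where $f_\infty$ is the density of $X_0$ furnished by Lemma \ref{tak}. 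Boundedness of $G$ is immediate since each $g_j$ is bounded, so the whole content is the $C^2$ regularity with bounded derivatives. (Assumption \ref{call} is at hand to guarantee finiteness of all moments of $\varepsilon_0$, underpinning the integrals below.)

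The crucial preliminary step is to upgrade the regularity of $f_\infty$. Recall from \eqref{ackroyd}, upon letting $m\to\infty$, that the characteristic function $\phi_\infty$ of $X_0$ satisfies $|\phi_\infty(u)|\leq|\phi(a_0 u)|$. Under Assumption \ref{vall} the change of variables $v=a_0 u$ gives $\int_{\mathbb{R}} u^2|\phi_\infty(u)|\,du\leq |a_0|^{-3}\int_{\mathbb{R}} v^2|\phi(v)|\,dv<\infty$, and a fortiori $\int_{\mathbb{R}}|u|\,|\phi_\infty(u)|\,du<\infty$ (split according to $|u|\leq 1$ and $|u|>1$, using $|\phi_\infty|\leq 1$ on the former). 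Differentiating the inverse-Fourier representation $f_\infty(x)=\frac{1}{2\pi}\int_{\mathbb{R}}\phi_\infty(u)e^{-iux}\,du$ under the integral sign then shows that $f_\infty$ is of class $C^2$ with
$$
\|f_\infty^{(k)}\|_\infty\leq \frac{1}{2\pi}\int_{\mathbb{R}}|u|^k|\phi_\infty(u)|\,du<\infty,\quad k=0,1,2.
$$
In particular $f_\infty$ and $f_\infty'$ are bounded and continuous, which is precisely the input the boundary terms below require.

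With $f_\infty\in C^1$ in hand, I would differentiate $G_1$ by Leibniz's rule. Writing $\Phi(\theta,s):=\int_{-\infty}^{s} g(\theta,x)f_\infty(x)\,dx$, so that $G_1(\theta)=\Phi(\theta,h(\theta))$, differentiation under the integral sign is justified because $\partial_{\theta_i}g$ is bounded and $f_\infty$ is integrable, while $\partial_s\Phi(\theta,s)=g(\theta,s)f_\infty(s)$ by continuity of $x\mapsto g(\theta,x)f_\infty(x)$. The chain rule then yields
$$
\partial_{\theta_i}G_1(\theta)=\int_{-\infty}^{h(\theta)}\partial_{\theta_i}g(\theta,x)\,f_\infty(x)\,dx+g(\theta,h(\theta))\,f_\infty(h(\theta))\,\partial_{\theta_i}h(\theta).
$$
A second differentiation, using that $g$ and $h$ have bounded derivatives up to order two (in all variables) and that $f_\infty,f_\infty'$ are bounded and continuous, produces for $\partial_{\theta_j}\partial_{\theta_i}G_1$ a sum of an integral term (bounded by $\|\partial^2 g\|_\infty$ since $\int f_\infty=1$) and boundary terms each of which is a product of bounded factors; the only place $f_\infty'$ enters is through $\partial_{\theta_j}[f_\infty(h(\theta))]=f_\infty'(h(\theta))\,\partial_{\theta_j}h(\theta)$. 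Hence all second-order partials of $G_1$ exist, are continuous, and are bounded uniformly in $\theta$ by a constant depending only on the sup-norms of $g$, $h$, their derivatives, and $f_\infty,f_\infty'$. Summing over the finitely many summands and interval types of \eqref{szek} gives Assumption \ref{boun}.

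I expect the only genuinely substantive step to be the density regularity: once $f_\infty\in C^1$ with bounded $f_\infty'$ is secured from Assumption \ref{vall} via the product bound \eqref{ackroyd}, everything else is a routine (if slightly tedious) application of the Leibniz and chain rules, with the boundedness of every factor tracked carefully. The one point requiring mild care is the justification of differentiating under the integral and of the boundary term, both of which hinge on the continuity and boundedness of $f_\infty$ and its first derivative.
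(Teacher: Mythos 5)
Your proof is correct, but it diverges from the paper's at the differentiation step, and in an instructive way. The paper changes variables, $y=x-h(\theta)$, so that $EH(\theta,X_0)=\int_{-\infty}^0 g(\theta,y+h(\theta))f_{\infty}(y+h(\theta))\,dy$ has a \emph{fixed} domain, and then differentiates under the integral sign; this places $f_{\infty}'(y+h(\theta))$ (and, at second order, $f_{\infty}''$) inside an integral over an unbounded domain, so a dominating integrable bound is required. That is exactly where Assumption \ref{call} enters the paper's argument: \eqref{krudy} yields analyticity of $\phi$ (hence of $\phi_{\infty}$) in a strip around $\mathbb{R}$, which via Theorem 11.9.3 of \cite{kawata} gives the exponential bounds \eqref{uccu} on $f_{\infty}',f_{\infty}''$ used for dominated convergence. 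You instead keep the moving boundary and apply the Leibniz rule, so the boundary contribution is the pointwise term $g(\theta,h(\theta))f_{\infty}(h(\theta))\partial_{\theta_i}h(\theta)$ and only sup-norm bounds on $f_{\infty}$ and $f_{\infty}'$ are needed — and these you correctly extract from Assumption \ref{vall} alone via $|\phi_{\infty}(u)|\leq|\phi(a_0u)|$ and Fourier inversion, just as the paper does in its first step. (The two first-derivative formulas are reconciled by an integration by parts in $x$.) Your route is therefore more elementary: it avoids the analytic-continuation machinery entirely, and in fact it never uses Assumption \ref{call} — your parenthetical appeal to it for moment bounds is vacuous, since all your integrals are of bounded functions against the probability density $f_{\infty}$. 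So your argument actually establishes the lemma under Assumptions \ref{linear} and \ref{vall} plus the stated smoothness of $g_j,h_j,h^1_j,h^2_j$, a slight sharpening of the statement (Assumption \ref{call} remains needed elsewhere, e.g.\ for the moment reduction in Remark \ref{strife} used by Corollary \ref{mao}). What the paper's approach buys in exchange is the explicit exponential decay \eqref{uccu} of $f_{\infty}$ and its derivatives, which makes all differentiations under the integral routine at any order; your approach buys weaker hypotheses at the cost of tracking boundary terms, which you do correctly, including the observation that only $f_{\infty}'$, not $f_{\infty}''$, appears at second order.
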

\begin{proof} We may and will assume that
$$
H(\theta,x)=1_{\{x<h(\theta)\}}g(\theta,x)
$$
with $h$ Lipschitz-continuous, $g$ bounded and Lipschitz-continuous. $G$ is bounded since $g$ is.
We proceed to establish its differentiability and the boundedness of its derivatives.

Recall that
$$
\phi_{\infty}(u)=\prod_{j=0}^{\infty} \phi(a_j u),\ u\in\mathbb{R},
$$
where $\phi_{\infty}$ is the characteristic function of $X_0$ and the product converges pointwise.
Since $|\phi(u)|\leq 1$ for all $u$,
\eqref{ko} implies that
$$
\int_{\mathbb{R}} u^2\vert\phi_{\infty}(u)\vert\, du<\infty.
$$
Clearly, this implies $\int_{\mathbb{R}} |u\phi_{\infty}(u)|\, du<\infty$ and
$\int_{\mathbb{R}} |\phi_{\infty}(u)|\, du<\infty$ as well (since $\phi_{\infty}$ is bounded,
being a Fourier transform).
Now one can directly show, using the inverse Fourier transform, that $f_{\infty}$, the density
of the law of $X_0$, is twice continuously differentiable.

Inequality \eqref{krudy} implies that $\phi$ has a complex analytic extension in a strip around $\mathbb{R}$.
Since the sequence $a_j$, $j\in\mathbb{N}$ is bounded, there is even a strip such that $u\to \phi(a_j u)$ is analytic in it,
for all $j\in\mathbb{N}$,
thus $\phi_{\infty}$ is also analytic there. Then so are $-iu\phi_{\infty}(u)$ and $-u^2\phi_{\infty}(u)$.
These being integrable, we get that their inverse Fourier transforms,
$f_{\infty}'$ and $f_{\infty}''$, satisfy
\begin{equation}\label{uccu}
\vert f_{\infty}'(x)\vert+\vert f_{\infty}''(x)\vert\leq \tilde{C}e^{-\tilde{\delta}|x|}\mbox{ for all }x\in\mathbb{R},
\end{equation}
with some $\tilde{C},\tilde{\delta}>0$, see e.g. Theorem 11.9.3 of \cite{kawata}. In particular, $f_{\infty}',f_{\infty}''$ are integrable.

For notational simplicity we consider only the case $N=1$, i.e. $D\subset\mathbb{R}$.
Using the change of variable $y=x-h(\theta)$, we see that
$$
EH(\theta,X_0)=\int_{\mathbb{R}} g(\theta,x)1_{\{x<h(\theta)\}}f_{\infty}(x)\, dx=\int_{-\infty}^0
g(\theta,y+h(\theta))f_{\infty}(y+h(\theta))\, dy.
$$
We calculate $\partial_{\theta}g(\theta,y+h(\theta))f_{\infty}(y+h(\theta))$:
$$
[\partial_1 g(\theta,y+h(\theta))+ \partial_2 g(\theta,y+h(\theta))h'(\theta)]f_{\infty}(y+h(\theta)) +
g(\theta,y+h(\theta))f_{\infty}'(y+h(\theta))h'(\theta),
$$
where $\partial_1$ (resp. $\partial_2$) denote differentiation with respect to the first (resp. second)
variable.
As $f_{\infty}$ (resp. $f_{\infty}'$) satisfy \eqref{krudy} (resp. \eqref{uccu}) and $g,\partial_1 g,\partial_2 g, h'$ are bounded, the dominated convergence theorem implies that
\begin{eqnarray*}
\partial_{\theta} EH(\theta,X_0) &=&\\
\int_{-\infty}^0[\partial_1 g(\theta,y+h(\theta))+\partial_2
g(\theta,y+h(\theta))h'(\theta)]f_{\infty}(y+h(\theta))\, dy &+&\\
\int_{-\infty}^0 g(\theta,y+h(\theta))f_{\infty}'(y+h(\theta))h'(\theta)\, dy &=&\\
\int_{\mathbb{R}}1_{\{x<h(\theta)\}}[\partial_1 g(\theta,x)+\partial_2
g(\theta,x)h'(\theta)]f_{\infty}(x)\, dx &+&\\
\int_{\mathbb{R}} 1_{\{x<h(\theta)\}}g(\theta,x)f_{\infty}'(x)h'(\theta)\, dx,& &
\end{eqnarray*}
where both integrals are clearly bounded in $\theta$. Similar calculations involving the second derivatives of $g,h,f_{\infty}$
show that $\partial_{\theta}^2 EH(\theta,X_0)$ exists and it is bounded in $\theta$.
\end{proof}

The following corollary summarizes our findings in the present section.

\begin{corollary}\label{mao} Let $H$ be of the form \eqref{szek} such that $g_j,h_j,h^1_j,h^2_j$ are
twice continuously differentiable with bounded first and second derivatives.
Let Assumptions \ref{linear}, \ref{call} and \ref{vall} hold and assume $\beta>5/2$.
Then Theorem \ref{main} applies to the random field $H(\theta,X_t)$, $t\in\mathbb{N}$, $\theta\in D$, provided
that Assumption \ref{stab} holds.
\end{corollary}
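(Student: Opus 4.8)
The plan is to verify, one by one, each hypothesis of Theorem \ref{main} for the random field $H(\theta,X_t)$ and then invoke that theorem directly. Assumption \ref{opa} is available by Remark \ref{rakka}. By Remark \ref{prodigue}, Assumption \ref{vall} implies Assumption \ref{smooth}, while Assumption \ref{call} guarantees that $\varepsilon_0$ has finite moments of every order; both facts will be used repeatedly below.

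First I would confirm the UCLM-$r$ property. Since $\varepsilon_0$ has moments of all orders, Remark \ref{strife} applies and lowers the admissible threshold for $\beta$ from $4r+1$ (as in Theorem \ref{exa}) to $r+1$. The hypothesis $\beta>3$ then lets me pick some $r$ with $2<r<\beta-1$; for this $r$ we simultaneously have $r>2$ (as required by Theorem \ref{main}) and $\beta>r+1$, so $H(\theta,X_t)$ is UCLM-$r$.

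Next I would dispatch the CLC property via Lemma \ref{lem_CLC}, which requires only Assumptions \ref{linear}, \ref{smooth} and $\beta>2$; all hold since $\beta>3$. Assumption \ref{boun} is exactly the conclusion of the lemma immediately preceding this corollary, whose hypotheses (twice continuous differentiability of the $g_j,h_j,h^1_j,h^2_j$ with bounded first and second derivatives, together with Assumptions \ref{linear}, \ref{call}, \ref{vall}) coincide with those assumed here. Finally, Assumption \ref{infsum} follows from Lemma \ref{mindet}, again valid because $\beta>3>2$. Assumption \ref{stab} is assumed outright. With every hypothesis of Theorem \ref{main} in force, its conclusion transfers verbatim to the present setting.

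The only genuinely delicate point is the choice of the exponent $r$: Theorem \ref{main} demands $r>2$ strictly, while the UCLM-$r$ property (through Remark \ref{strife}) demands $\beta>r+1$. These two constraints are simultaneously satisfiable precisely when $\beta>3$, which is exactly the hypothesis of the corollary; this is what forces the threshold $\beta>3$ rather than the weaker $\beta>2$ sufficient for Lemmas \ref{lem_CLC} and \ref{mindet}. Everything else is bookkeeping, reading off each cited result under the stated assumptions.
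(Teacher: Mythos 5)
Your proof is correct and takes essentially the same route as the paper's own (one-sentence) proof, which simply invokes Remarks \ref{rakka}, \ref{prodigue} and \ref{strife} together with the other results of the section (Lemma \ref{lem_CLC} for CLC, Lemma \ref{mindet} for Assumption \ref{infsum}, and the preceding lemma for Assumption \ref{boun}). Your explicit selection of $r\in(2,\beta-1)$, reconciling the requirement $r>2$ of Theorem \ref{main} with the threshold $\beta>r+1$ from Remark \ref{strife}, spells out precisely the point the paper leaves implicit and correctly explains why $\beta>3$ is the operative hypothesis.
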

\begin{proof}
Recalling \ref{prodigue} and \ref{strife}, this corollary
follows from the results of the present section.
\end{proof}

Assumptions \ref{call} and \ref{vall} apply, in particular, 
when $\varepsilon_0$ is Gaussian. There does not seem 
to be a general condition guaranteeing the validity of Assumption \ref{stab}: this needs
checking in every concrete application of Theorem \ref{main}.

\subsection{Markov chains in a random environment}\label{mark}

If $\beta\leq 5/2$ in the setting of Subsection \ref{lini} above then Corollary \ref{mao} cannot
be established with our methods. Hence
$X_t$ cannot be a ``long memory processes'' in the sense of \cite{giraitis}. In this subsection we show that
it is nonetheless possible to apply Theorem \ref{main} to important classes of random fields that are driven by a 
long memory process, see
Example \ref{rough} below.

Let $\varepsilon^1_t$, $t\in\mathbb{Z}$, $\varepsilon^2_t$, $t\in\mathbb{Z}$ be i.i.d. real-valued
sequences, independent of each other.

\begin{assumption}\label{contract}  
We denote $\chi_t:=(\varepsilon^1_{j+t})_{j\in\mathbb{Z}}$, for each $t\in\mathbb{Z}$.
Let $F:\mathbb{R}^{\mathbb{Z}}\times\mathbb{R}\times\mathbb{R}^m\to\mathbb{R}^m$ be a measurable function
such that, for all $w\in\mathbb{R}^{\mathbb{Z}}$, $s\in\mathbb{R}$,
$$
\vert F(w,s,z_1)-F(w,s,z_2)\vert\leq \rho |z_1-z_2|,
$$
for all $z_1,z_2\in\mathbb{R}^m$ with some $0<\rho<1$. Furthermore, there is $x\in\mathbb{R}^m$ such that for all 
$w\in\mathbb{R}^{\mathbb{Z}}$ and for all $s\in\mathbb{R}$,
$$
| F(w,s,x)-x |\leq C(1+\vert s\vert)
$$
and $E\vert \varepsilon_0^2\vert^r<\infty$ for some $r>2$.
\end{assumption}

Fix $x\in\mathbb{R}$ as in Assumption \ref{contract} and define, for all 
$t\in\mathbb{Z}$, $\tilde{X}^t_0:=x$, and for $j\geq 0$,
$$
\tilde{X}^t_{j+1}:=F(\chi_{t},\varepsilon_{t}^2,\cdot)\circ F(\chi_{t-1},\varepsilon_{t-1}^2,\cdot)\circ\cdots
F(\chi_{t-j},\varepsilon_{t-j}^2,\cdot)(x).
$$

Standard arguments (such as Proposition 5.1 of \cite{diaconis}) show that $\tilde{X}_j^t$
converges almost surely as $j\to\infty$. Define
$$
X_t:=\lim_{j\to\infty} \tilde{X}^t_j. 
$$
Then $X_t$, $t\in\mathbb{Z}$ is clearly a stationary process, satisfying
$$
X_{t+1}=F(\chi_{t+1},\varepsilon_{t+1}^2,X_t),\ t\in\mathbb{Z}.
$$

When freezing the values of $\varepsilon_t^1$, $t\in\mathbb{Z}$, the $X_t$ defined above
is an (inhomogeneous) Markov chain driven by the noise sequence $\varepsilon_t^2$,
$t\in\mathbb{Z}$. Hence $X_t$ is a Markov
chain in a random environment (the latter is driven by $\varepsilon_t^1$, $t\in\mathbb{Z}$).

\begin{example}\label{rough} {\rm Let $\varepsilon_i^2$, 
$i\in\mathbb{Z}$ be i.i.d. with $E|\varepsilon_0^2|^r<\infty$
for some $r>2$. Let $E\varepsilon_0^1=0$, $E(\varepsilon_0^1)^2<\infty$.
Let $Y_t:=\sum_{j=0}^{\infty} a_j \varepsilon_{t-j}^1$, $t\in\mathbb{Z}$ for some $a_j$, $j\in\mathbb{N}$
with $\sum_{j=0}^{\infty} a_j^2<\infty$. The series converges almost surely. Let $h_1,h_2:\mathbb{R}\to\mathbb{R}$
be bounded measurable and fix $-1<\kappa,\rho<1$. The construction sketched above provides the existence of a process $X_t$
satisfying
$$
X_{t+1}=\kappa X_t +\rho e^{h_1(Y_{t+1})}h_2(\varepsilon_{t+1}^1)+\sqrt{1-\rho^2}e^{h_1(Y_{t+1})}\varepsilon_{t+1}^2.
$$

This is an instance of stochastic volatility models where $h_1(Y)$ corresponds to the log-volatility 
of an asset and $X$ is the increment of the log-price of the same 
asset. Note that $Y$ may have a slow
autocorrelation decay (e.g. $a_j\sim j^{-\beta}$ with any $\beta>1/2$ is possible).
This model resembles the ``fractional stochastic volatility model'' of \cite{comte,rough}.
Choose $x:=0$
and 
$$
F(w,s,z):=\kappa z+\rho e^{h_1(\sum_{j=0}^{\infty}a_j w_{t+1-j})}h_2(w_{t+1})
+\sqrt{1-\rho^2}e^{h_1(\sum_{j=0}^{\infty}a_j w_{t+1-j})}s.
$$
As easily seen, Assumption \ref{contract}
holds for this model and thus Theorem \ref{louvre} below applies.

The functions $h_1,h_2$ serve as 
truncations only, in order to satisfy Assumption \ref{contract}. One could probably
relax Assumption \ref{contract} to accomodate the case $h_1(x)=h_2(x)=x$ as well. We refrain from
the related complications in the present paper.} 
\end{example}

The result below permits to estimate the tracking error 
for another large class of
non-Markovian processes. For simplicity, we consider only smooth functions $H$ here.

\begin{theorem}\label{louvre} Let 
$D\subset\mathbb{R}^N$ be bounded and open.
Let Assumption \ref{contract} hold. Let $H:D\times\mathbb{R}^m\to\mathbb{R}^N$ be
bounded, twice continuously differentiable, with bounded first and second derivatives.
Then the conclusion of Theorem \ref{main} is true for $H(\theta,X_t)$, $t\in\mathbb{N}$, $\theta\in D$, provided
that Assumption \ref{stab} holds. 
\end{theorem}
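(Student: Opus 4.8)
The plan is to verify that the hypotheses of Theorem \ref{main} are met by the random field $H(\theta,X_t)$ arising from the Markov chain in a random environment constructed above, and then simply invoke Theorem \ref{main}. Since $H$ is assumed bounded and twice continuously differentiable with bounded derivatives, the CLC property \eqref{kac} follows immediately from the mean value theorem (the Lipschitz constant being the uniform bound on $\partial_\theta H$), so no delicate smoothing arguments of the kind needed in Section \ref{lini} are required here. The two substantive tasks are therefore to establish (i) that $H(\theta,X_t)$ is UCLM-$r$ and (ii) that Assumptions \ref{boun} and \ref{infsum} hold; Assumption \ref{stab} is granted by hypothesis and Assumption \ref{opa} is satisfied by the product structure of $\Omega=\Omega_1\times\Omega_2$ declared at the start of this subsection.

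First I would establish the key contraction estimate for the chain itself. By Assumption \ref{contract}, each map $z\mapsto F(w,s,z)$ is a $\rho$-contraction with $0<\rho<1$, so comparing the iterates $\tilde{X}^t_j$ built from two starting points (or truncated at different depths) yields geometric decay: the difference of chains started at times $t-m$ versus run to infinity is controlled by $\rho^m$ times an affine function of the noise increments, using the growth bound $|F(\chi,s,x)-x|\leq C(1+|s|)$. Because $E|\varepsilon_0^2|^r<\infty$ for $r>2$, this gives both $L^r$-boundedness and summability of the conditional mixing coefficients $\gamma^n_r$. Concretely, I would define a truncated approximation $X_t^{\circ}$ that is $\mathcal{F}^+_{t-m}\vee\mathcal{F}_n$-measurable (freezing the environment and noise outside a window), bound $E^{1/r}[|X_t-X_t^{\circ}|^r\mid\mathcal{F}_n]$ by $C\rho^m$ times a quantity with integrable $L^1$-norm uniform in $n$, and then transfer this to $\gamma^n_r(m,H)$ via the Lipschitz property of $H$ in $x$ exactly as in Lemma \ref{mall}. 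The exponential decay $\rho^m$ makes $\Gamma^n_r(H)$ trivially summable and bounded in $L^1$, giving the UCLM-$r$ property; $M^n_r(H)$ is bounded in $L^1$ simply because $H$ is bounded.

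For Assumption \ref{boun}, I would note that $G(\theta)=EH(\theta,X_0)$ inherits twice continuous differentiability with bounded derivatives directly from the corresponding smoothness of $H$ in $\theta$, via dominated convergence (the derivatives $\partial_\theta H$, $\partial^2_\theta H$ are bounded by hypothesis, so differentiation under the integral sign is justified). For Assumption \ref{infsum}, I would appeal to the forgetting property established through the contraction estimate: the conditional law of $X_{k+1}$ given $\mathcal{F}_n$ converges to the stationary law at geometric rate in $k$, and since $x\mapsto H(\theta,x)$ is Lipschitz uniformly in $\theta$ (as its $x$-gradient is bounded), the argument sketched in Remark \ref{frater} applies with $\Gamma_1$ replaced by the geometrically summable contraction bound, yielding the finite uniform sum in \eqref{num}.

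The main obstacle I anticipate is handling the \emph{random environment} correctly in the conditional mixing estimates. Under the full measure $P$ the chain $X_t$ is not Markovian --- it is Markov only under each frozen environment $P^{\mathbf{f}}$ --- so the coupling/contraction argument must be carried out pathwise in the environment variable $\chi$ and only then integrated against $P_1$. The subtlety is that the conditioning sigma-field $\mathcal{F}_n=\sigma(\varepsilon_i,i\leq n)$ mixes past environment and past driving noise, and the definition of $X^{\circ}_t$ must respect the measurability requirement with respect to $\mathcal{F}^+_{t-m}\vee\mathcal{F}_n$ while still enjoying the geometric approximation bound uniformly over environments. Verifying that the affine growth bound combined with $E|\varepsilon_0^2|^r<\infty$ survives this conditioning --- and that the resulting bounds are genuinely uniform in $n$ and integrable in $L^1$, not merely finite almost surely --- is where the care is needed; once the exponential contraction is established uniformly in the environment, the rest reduces to the same machinery already developed for the linear-process examples.
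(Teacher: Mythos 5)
There is a genuine gap at the heart of your plan: you propose to verify the UCLM-$r$ property for $H(\theta,X_t)$ under $P$ with respect to the joint filtration $\mathcal{F}_n=\sigma(\varepsilon_i,\, i\le n)$, $\varepsilon_i=(\varepsilon^1_i,\varepsilon^2_i)$, and then invoke Theorem \ref{main} as a black box. This verification fails in general. The contraction in Assumption \ref{contract} gives geometric forgetting only in the state/driving-noise direction ($\varepsilon^2$); the environment enters \emph{every} iteration through $\chi_s=(\varepsilon^1_{j+s})_{j\in\mathbb{Z}}$, and in the motivating Example \ref{rough} the map $F(\chi_s,\varepsilon^2_s,\cdot)$ depends on the whole past environment through $Y_s=\sum_j a_j\varepsilon^1_{s-j}$ with $a_j\sim j^{-\beta}$ for \emph{any} $\beta>1/2$ (and $h_1$ merely bounded measurable). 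Your truncated approximant $X^\circ_t$ must be $\mathcal{F}^+_{t-m}\vee\mathcal{F}_n$-measurable, so it cannot use the environment coordinates $\varepsilon^1_j$ with $n<j\le t-m$; but each step $s\in(t-m,t]$ of the chain depends on exactly those coordinates, with influence decaying only polynomially (or not quantifiably at all, since $h_1$ need not be continuous). Hence no geometrically small — indeed, possibly no summable — bound on $\gamma^n_r(m,H)$ exists with respect to the joint filtration; this is precisely the long-memory regime that the subsection is designed to handle and that defeats the method of Subsection \ref{lini}. For the same reason your claim that ``the conditional law of $X_{k+1}$ given $\mathcal{F}_n$ converges to the stationary law at geometric rate'' is false under $P$ with the joint filtration, so your route to Assumption \ref{infsum} breaks as well.

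You do sense the difficulty (``carried out pathwise in the environment variable $\chi$ and only then integrated against $P_1$''), but the resolution requires a structural change you do not make: the paper conditions on the \emph{entire} environment trajectory (including its future), working under $P^{\mathbf{f}}=\delta_{\mathbf{f}}\otimes P_2$ with the filtration \emph{redefined} as $\mathcal{F}_n=\sigma(\varepsilon^2_j,\, j\le n)$, $\mathcal{F}^+_n=\sigma(\varepsilon^2_j,\, j>n)$. Under each $P^{\mathbf{f}}$ this setup is isomorphic to Assumption \ref{opa}, the chain is an inhomogeneous Markov chain driven by the i.i.d. $\varepsilon^2$, and the contraction yields $\gamma^n_r(m,X)\le C\rho^m(\cdot)$ with bounds \emph{uniform in} $\mathbf{f}$; Remark \ref{frater} then gives Assumption \ref{infsum} with $C_0$ independent of $\mathbf{f}$. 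Crucially, even then one cannot simply cite Theorem \ref{main}: its \emph{proof} is rerun under $P^{\mathbf{f}}$, the key martingale-type estimate $E^{\mathbf{f}}[S_2]\le C''\lambda^{1/2}$ is shown with $C''$ independent of $\mathbf{f}$, and only at the end is the bound integrated against the environment law $\varpi$ to obtain $E|\theta_t-z_t|\le C\lambda^{1/2}$ under $P$. Your remaining ingredients (CLC via the mean value theorem, Assumption \ref{boun} via dominated convergence, the telescoping contraction estimate itself) are fine, but without the freeze-the-environment device the argument as proposed does not go through.
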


The proof is given in Section \ref{appi}.
Most results in the literature are about homogeneous (controlled) Markov chains 
hence they do not apply to the present, inhomogeneous case and we exploit the $L$-mixing property in an essential way
in our arguments. See, however, also Subsection 5.3 of \cite{lp} for alternative conditions in the inhomogeneous Markovian case.

\section{Numerical implementation}\label{numera}

Numerical results are presented here verifying the convergence properties of stochastic approximation procedures
with a fixed gain in the case of discontinuous $H$, for Markovian and non-Markovian models. The purpose here is 
illustrative. 

\subsection{Quantile estimation for AR($1$) processes}

We first consider a Markovian example in the simplest possible case where
$H(\theta,\cdot)$ is an indicator function. Let $X_t$, $t \in \mathbb{Z}$ be an AR(1) process defined by
\begin{align*}
X_{t+1}=\alpha X_{t}+\varepsilon_{t+1}
\end{align*}
where $\alpha$ is a constant satisfying $|\alpha| <1$ and $\varepsilon_{t}$, $t \in \mathbb{Z}$ are i.i.d standard normal variates. As a consequence of the above equation, one observes that
\begin{align*}
X_{t}=\sum_{j=0}^\infty \alpha^{j} \varepsilon_{t-j}
\end{align*}
for every $t \in \mathbb{N}$. Moreover, $X_t$ has stationary distribution which is $\nu:=N(0,(1-\alpha^2)^{-1})$ and the pair $(X_t, X_{t+1})$ has bivariate normal distribution with correlation $\alpha$. We are interested in finding the quantile of the stationary distribution $\nu$ using the stochastic approximation method \eqref{elso} with fixed gain.

The algorithm for the fixed gain $\lambda>0$ is given by the following equation,
\begin{align} \label{num:theta}
\theta_{t+1}=\theta_{t}+\lambda H(\theta_t, X_{t+1}),
\end{align}
for every $t \in \mathbb{N}$. For the purpose of the $q$-th quantile estimation of the stationary distribution $\nu$, one takes
\begin{align} \label{num:H}
H(\theta,x)= q - 1_{\{x \leq \theta\}}.
\end{align}
With this choice of $H$, the solution of (\ref{g}) is the quantile in question. The function $H$ is just the gradient of the so-called ``pinball'' loss function introduced in Section 3 of \cite{rb} for quantile estimation. 
The true value of the $q$-th quantile of $\nu$ is $\Phi(q)/\sqrt{1-\alpha^2}$, where $\Phi$ is the cumulative distribution function of the standard normal variate. For our numerical experiments, we take $\alpha=0.5$ and $q=0.975$ and hence the true value of the $q$-th quantile is $\theta^*\approx 2.26$.

Figure \ref{fig:fg1} illustrates that the rate of convergence of the fixed gain algorithm is consistent with our theoretical findings in the paradigm of the quantile estimation of the stationary distribution of an AR$(1)$ process. As noted above, the true value of the quantile in this particular example is $2.26$ which is then compared with the estimate obtained by using the fixed gain approximation algorithm. The Monte Carlo estimate 
is based on $12000$ samples and the number of iterations is taken to be $I = 10^6$ with initial value $\theta_0=2.0$.

\begin{figure}[H]
\includegraphics[scale=0.6]{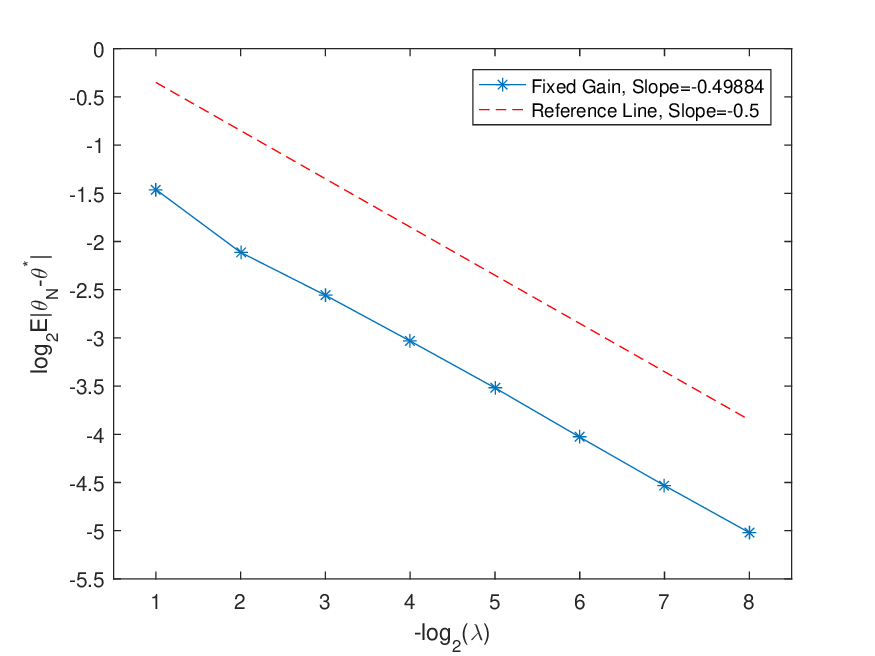}
\caption{Rate of Convergence of Fixed Gain Algorithm for AR($1$) Process}\label{fig:fg1}
\end{figure}

\subsection{Quantile estimation for MA($\infty$) processes}

Let us now consider the case when $X_t$, $t \in \mathbb{N}$ is an MA($\infty$) process which is  
non-Markovian. It is given by
\begin{equation}\label{filu}
X_t=\sum_{j=0}^\infty \frac{1}{(j+1)^\beta} \varepsilon_{t-j}
\end{equation}
where $\beta>1/2$ and $\varepsilon_t$, $t \in \mathbb{Z}$ are i.i.d sequence of standard normal variates. One can notice that the stationary distribution of MA($\infty$) process is given by
\begin{align*}
X_t \sim N\Big(0, \sum_{j=0}^\infty \frac{1}{(j+1)^{2\beta}}\Big)
\end{align*}
for any $t \in \mathbb{N}$. As before, we are interested in the estimation of the quantile of the stationary distribution. In our numerical calculations, $\beta=3$ and the exact variance is $\pi^6/945$.  For generating the path of the MA($\infty$) process, we write $X_t$ as
\begin{align*}
X_t=\sum_{j=0}^{t} \frac{1}{(j+1)^\beta} \varepsilon_{t-j} + \sum_{j=0}^{\infty} \frac{1}{(t+2+j)^\beta} \varepsilon_{-j-1}
\end{align*}
and notice that
\begin{align*}
Y_t:=\sum_{j=0}^{\infty} \frac{1}{(t+2+j)^\beta} \varepsilon_{-j-1} \sim N\Big(0, \sum_{j=0}^{\infty} \frac{1}{(t+2+j)^{2\beta}}\Big)
\end{align*}
for any $t \in \mathbb{N}$. Also, a reasonable approximation of the variance of $Y_t$ can be
\begin{align*}
\mathrm{var}(Y_t) \approx \sum_{j=0}^{12} \frac{1}{(t+2+j)^{2\beta}}
\end{align*}
which is within an interval of length $10^{-7}$ around the true value.  With this set-up, the stochastic approximation method \eqref{num:theta} with updating function \eqref{num:H} is implemented for the quantile estimation of the  stationary distribution of MA($\infty$) process with $\theta_0=2.0$ and $\theta^*=1.976950$ ($0.975$-th quantile). 
Figure \ref{fig:fg2} indicates that the rate of convergence of the fixed gain algorithm is $0.5$, which is consistent with the theoretical findings. The Monte Carlo estimate  
is based on $10^5$ samples.  
Figure \ref{fig:fg2} is based on $I = 10^5$ iterations.

\begin{figure}[H]
\includegraphics[scale=0.6]{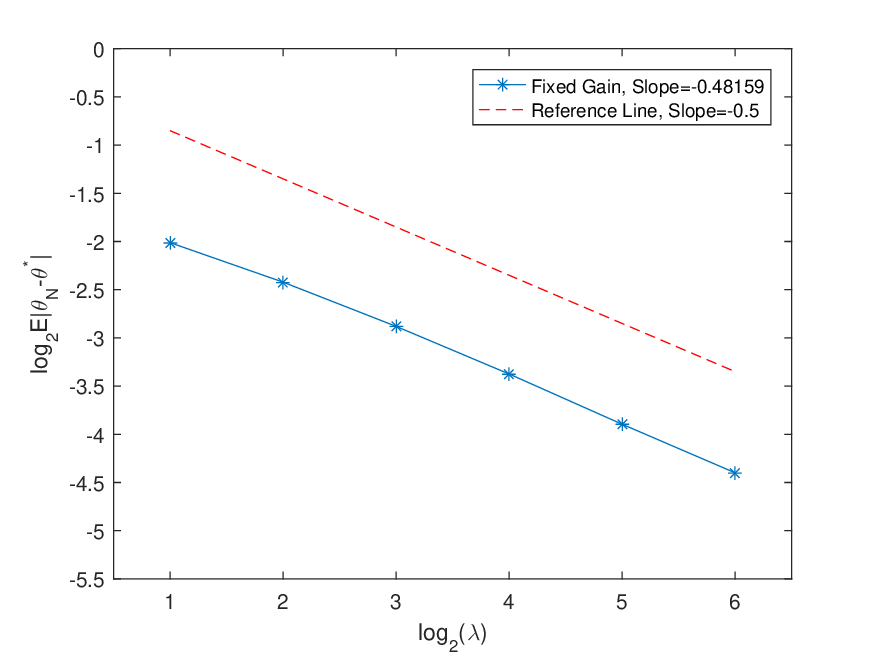}
\caption{Rate of Convergence of Fixed Gain Algorithm for MA($\infty$) Process}\label{fig:fg2}
\end{figure}

\subsection{Kohonen algorithm}

In this section, we demonstrate the rate of convergence of the Kohonen algorithm for 
optimally quantizing a one-dimensional random variable $X$. 
We refer to \cite{kohonen, fort} for discussions. 
We fix the number of cells $N\geq 1$ in advance. Let 
$\theta:=(\theta^1, \ldots, \theta^N)\in\mathbb{R}^N$  
and define Voronoi cells as 
\begin{align*}
\mathcal{V}^i(\theta):=\{ x \in \mathbb{R}: |x-\theta^i|=\min_{j\in\{1,\ldots,N\}}|x-\theta^j|\}
\end{align*}
for $i=1,\ldots,N$. Values of $X$ in a cell $i$ will be
quantized to $\theta^i$. The zero-neighbourhood fixed gain Kohonen algorithm is
aimed at minimizing, in $\theta$, the quantity
$$
\sum_{i+1}^N E\left[\vert X-\theta^i\vert^2 1_{\mathcal{V}^i(\theta)}(X)\right].
$$
Differentiating (formally) this formula suggests the recursive procedure
\begin{align}
\theta_{t+1}^i=\theta_t^i+\lambda \mathbf{1}_{\mathcal{V}^i(\theta)}(Y_t)(Y_t-\theta^i_t) \label{num:koh1}
\end{align}
for every $i=1,\ldots,N$ where $t \in \mathbb{N}$ and the process $Y$ has a stationary distribution 
equal to the law of $X$. The algorithm approximates the $\mathbb{R}$-valued random variable $X$ by $\theta^i$ if 
its values lie in the cell $\mathcal{V}^i(\theta)$, for every $i=1,\ldots,N$. 

In 
Figure  \ref{fig:fig3}, we demonstrate the rate of convergence of the zero-neighbourhood Kohonen algorithm 
with zero-neighbours when the signal $Y_t$s are i.i.d. observations from uniform distribution on $[0,1]$,
which is a well-understood case, see e.g. \cite{kohonen}. 
We take $N=2$, $\theta:=(\theta_1,\theta_2)$, $\mathcal{V}^1(\theta)=(0,(\theta^1+\theta^2)/2]$ and 
$\mathcal{V}^2(\theta)=[(\theta_1+\theta_2)/2,1)$. Hence, the optimal value of $\theta$ is 
$\theta^{1*}=1/4$ and $\theta^{2*}=3/4$. The number of iterations is $10^8$ and the number of sample paths is $10^3$. Furthermore, the initial values of $\theta$s are $\theta^1_0=0.01$ and $\theta^2_0=0.02$. As illustrated, the rate of convergence is close to $0.5$ which is consistent with the theoretical findings. 

\begin{figure}[H]
\includegraphics[scale=0.5]{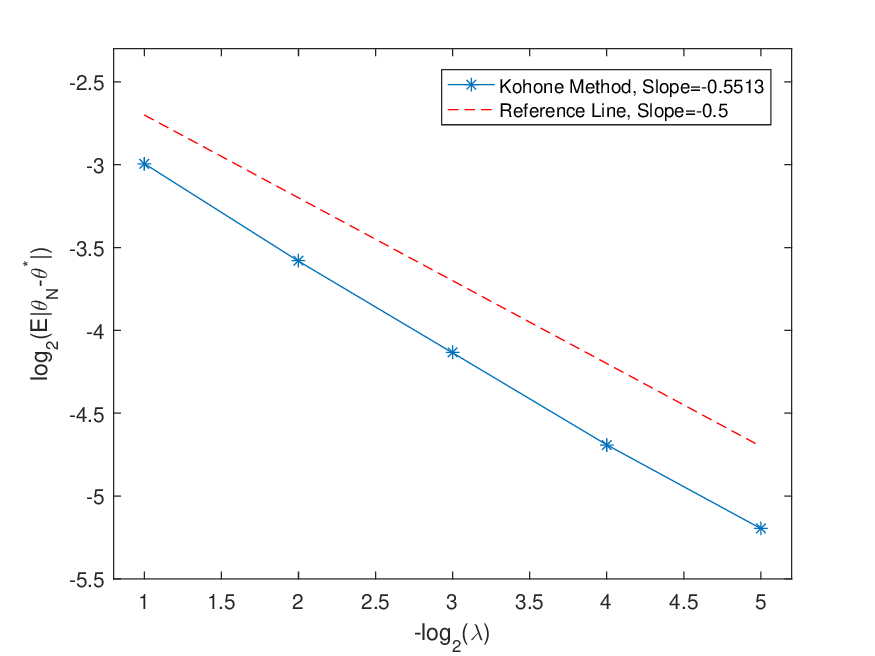}
\caption{Rate of Convergence of Kohonen Algorithm for i.i.d. $U(0,1)$.}\label{fig:fig3}
\end{figure}

Now, to have a non-Markovian example, consider a moving average process with lag $10$, i.e. 
$$
X_t = \sum_{j=0}^{10}\frac{1}{(j+1)^\beta} \varepsilon_{t-j},\ t\in\mathbb{N},
$$ 
where $\varepsilon_t$, $t\in\mathbb{Z}$ are independent standard Gaussian random variables, denote it by MA($10$). Clearly, 
$$
X_t \sim \mathcal{N}\Big(0, \sum_{j=0}^{10}\frac{1}{(j+1)^{2\beta}}\Big)
$$ 
for any $t \geq 0$. Take $\beta:=3$ and notice that MA($10$) is a good approximation of MA($\infty$) process 
\eqref{filu} because the contributions from other terms are negligible due to low variance. We take $N=2$ 
and implement the Kohonen algorithm \eqref{num:koh1} to sample two elements $\theta:=(\theta^1, \theta^2)$ 
from the stationary distribution of the process $Y$ defined by $Y_t:=\tan^{-1}(X_t)$ for any $t \geq 0$. 
As the support of the stationary distribution of the process $Y$ is $(-\pi/2, \pi/2)$, the Voronoi cells are $\mathcal{V}^1(\theta):=(-\pi/2, (\theta^1+\theta^2)/2]$ and $\mathcal{V}^2(\theta):=[(\theta^1+\theta_2)/2, \pi/2)$. The true values  $\theta^*:=(\theta^{1*},\theta^{2*})$ are the solution of the following system of two non-linear equations: 
\begin{align*}
\theta^{1*} \Phi\Big(\frac{1}{\sigma}\tan\big(\frac{\theta^{1*}+\theta^{2*}}{2}\big)\Big)=E\Big(\tan^{-1}(\sigma Z)
\mathbf{1}_{(-\infty, \frac{1}{\sigma}\tan(\frac{\theta^{1*}+\theta^{2*}}{2})]}(Z)\Big)
\\
\theta^{2*} \Big[1-\Phi\Big(\frac{1}{\sigma}\tan\big(\frac{\theta^{1*}+\theta^{2*}}{2}\big)\Big)\Big]=E\Big(\tan^{-1}(\sigma Z)\mathbf{1}_{[\frac{1}{\sigma}\tan(\frac{\theta^{1*}+\theta^{2*}}{2}), \infty )}(Z)\Big)
\end{align*}
where $\sigma^2:=\mathrm{var}(X_t)$,  $Z$ denotes the standard normal variate and $\Phi$ its distribution function. 

Figure \ref{fig:fig4} is based on  $10^8$ iterations and $3000$ paths (for Monte Carlo simulations). 
The initial values are $\theta_0^1=-\pi/4$ and $\theta_0^2=\pi/4$. Since $\theta^*$ is not known the output of
the Kohonen algorithm \eqref{num:koh1} with $\lambda=2^{-9}$ is taken as $\theta^*$. 
Again, our numerical experiments are consistent with the theoretical rate $\lambda^{1/2}$ found in Theorem \ref{main} above.

\begin{figure}[H]
\includegraphics[scale=0.5]{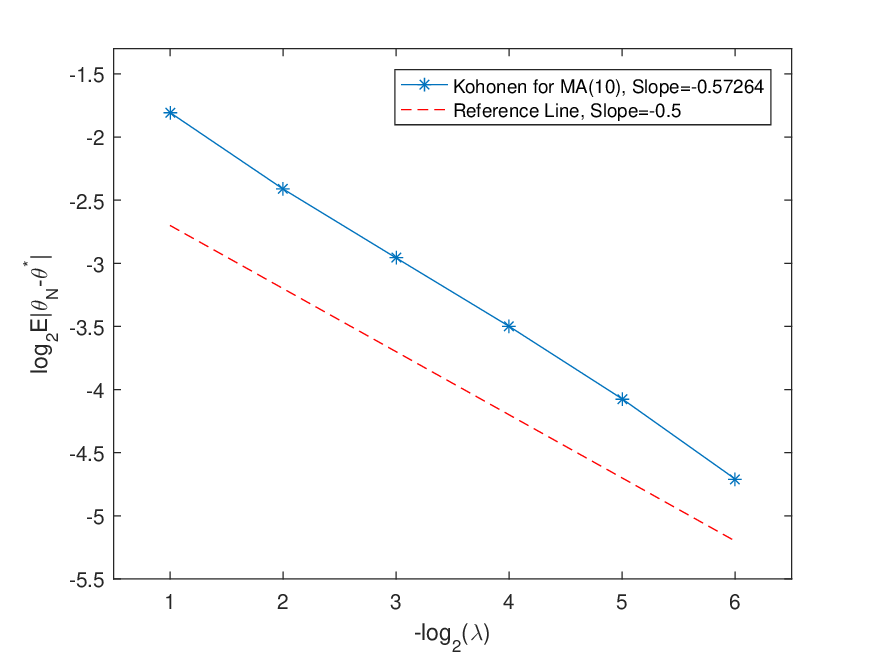}
\caption{Rate of Convergence of Kohonen Algorithm for MA($10$).}\label{fig:fig4}
\end{figure}

\section{Appendix}\label{appi}

Here we gather the proofs for Sections \ref{ketto} and \ref{harom} as well as for Theorem \ref{louvre}.
First we present a slight extension of Lemma 2.1 of \cite{laci1} which is used multiple times.

\begin{lemma}\label{mall} Let 
$\mathcal{G},\mathcal{H}\subset\mathcal{F}$
be sigma-algebras. Let $X,Y$ be random variables in $L^p$ such that $Y$ is measurable with
	respect to $\mathcal{H}\vee\mathcal{G}$.
	Then for any $p\ge 1$,
	$$
	E^{1/p}\left[\vert X-E[X\vert\mathcal{H}\vee\mathcal{G}]\vert^p\big\vert \mathcal{G}\right]
	\leq 2E^{1/p}\left[\vert X-Y\vert^p\big\vert \mathcal{G}\right].
	$$
	If $Y$ is $\mathcal{H}$-measurable then
	\begin{equation}\label{dirk}
	\Vert X-E[X\vert\mathcal{H}]\Vert_p\leq 2 \Vert X-Y\Vert_p.
	\end{equation}
\end{lemma}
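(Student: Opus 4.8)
The plan is to reduce everything to a single algebraic identity followed by the conditional Minkowski and Jensen inequalities. First I would observe that, since $Y$ is measurable with respect to $\mathcal{H}\vee\mathcal{G}$, we have $E[Y\vert\mathcal{H}\vee\mathcal{G}]=Y$ almost surely, so that
$$
X-E[X\vert\mathcal{H}\vee\mathcal{G}]=(X-Y)-\left(E[X\vert\mathcal{H}\vee\mathcal{G}]-Y\right)=(X-Y)-E[X-Y\vert\mathcal{H}\vee\mathcal{G}].
$$
This identity is the crux: it rewrites the quantity we wish to bound purely in terms of $X-Y$ and a conditional expectation of $X-Y$, both of which are controllable.

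Next, since $\mathcal{G}\subseteq\mathcal{H}\vee\mathcal{G}$, the map $Z\mapsto E^{1/p}[\vert Z\vert^p\vert\mathcal{G}]$ is a conditional $L^p$-seminorm obeying the conditional Minkowski inequality; applying it to the displayed identity gives
$$
E^{1/p}\left[\vert X-E[X\vert\mathcal{H}\vee\mathcal{G}]\vert^p\big\vert\mathcal{G}\right]\leq E^{1/p}\left[\vert X-Y\vert^p\big\vert\mathcal{G}\right]+E^{1/p}\left[\vert E[X-Y\vert\mathcal{H}\vee\mathcal{G}]\vert^p\big\vert\mathcal{G}\right].
$$
For the second summand I would invoke the conditional Jensen inequality for the convex function $t\mapsto\vert t\vert^p$, which yields $\vert E[X-Y\vert\mathcal{H}\vee\mathcal{G}]\vert^p\leq E[\vert X-Y\vert^p\vert\mathcal{H}\vee\mathcal{G}]$ almost surely; taking $E[\,\cdot\,\vert\mathcal{G}]$ of both sides and using the tower property (legitimate because $\mathcal{G}\subseteq\mathcal{H}\vee\mathcal{G}$) collapses the right-hand side to $E[\vert X-Y\vert^p\vert\mathcal{G}]$. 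Hence the second summand is also bounded by $E^{1/p}[\vert X-Y\vert^p\vert\mathcal{G}]$, and adding the two contributions produces exactly the factor $2$ claimed in the first inequality.

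Finally, the unconditional estimate \eqref{dirk} would follow by specialising $\mathcal{G}$ to the trivial $\sigma$-algebra $\{\emptyset,\Omega\}$: then $\mathcal{H}\vee\mathcal{G}=\mathcal{H}$, the standing hypothesis that $Y$ be $\mathcal{H}$-measurable is precisely the measurability required above, and every conditional $L^p$-norm reduces to the ordinary norm $\Vert\cdot\Vert_p$. I do not anticipate a genuine obstacle in this argument; the only point deserving minor care is the justification of the conditional Minkowski inequality used above, but this is a standard fact that can simply be quoted.
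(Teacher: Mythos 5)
Your argument is correct and follows essentially the same route as the paper: the identical decomposition of $X-E[X\vert\mathcal{H}\vee\mathcal{G}]$ through $Y$ (using $E[Y\vert\mathcal{H}\vee\mathcal{G}]=Y$), conditional Jensen plus the tower property for the conditional-expectation term, and specialisation to trivial $\mathcal{G}$ for \eqref{dirk}. The only cosmetic difference is that you invoke the conditional Minkowski inequality where the paper uses the elementary bound $|a+b|^p\leq 2^{p-1}\left(|a|^p+|b|^p\right)$ and extracts the $p$-th root at the end; both deliver the same constant $2$.
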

\begin{proof} Since $Y$ is 
$\mathcal{H}\vee\mathcal{G}$-measurable,
\begin{eqnarray*}
E\left[ |X-E\left[ X \vert\mathcal{H}\vee\mathcal{G}\right]|^p\big\vert\mathcal{G}\right] &\leq&\\
2^{p-1}E\left[ |X-Y|^p\big\vert\mathcal{G}\right]
+ 2^{p-1}E\left[ |Y-E\left[ X\vert\mathcal{H}\vee\mathcal{G}\right]|^p\big\vert\mathcal{G}\right] &\leq&\\
2^{p-1}E\left[ |X-Y|^p\big\vert\mathcal{G}\right]
+ 2^{p-1}E\left[ E\left[ \vert Y-X\vert \big\vert\mathcal{H}\vee\mathcal{G}\right]^p\big\vert\mathcal{G}\right] &\leq&\\
2^{p-1}E\left[ |X-Y|^p\big\vert\mathcal{G}\right]
+ 2^{p-1}E\left[ E \left[  \vert Y-X\vert^p\big\vert\mathcal{H}\vee\mathcal{G} \right]\big\vert\mathcal{G} \right] &\leq&\\
2^{p}E\left[ \vert Y-X\vert^p\big\vert\mathcal{G}\right], & &
\end{eqnarray*}
by Jensen's inequality. Now \eqref{dirk} follows by taking $\mathcal{G}$ to be the trivial sigma-algebra. 
\end{proof}
We now note what happens to products of two random fields.
\begin{lemma}\label{product}
Let $X_t(\theta)$ be ULM-$rp$ and $Y_t(\theta)$ ULM-$rq$ where $r\geq 1$, $p>1$, $1/p+1/q=1$.
Then $X_t(\theta)Y_t(\theta)$ is ULM-$r$.
\end{lemma}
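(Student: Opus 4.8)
The plan is to verify the three defining properties of ULM-$r$ for the product field $Z_t(\theta):=X_t(\theta)Y_t(\theta)$, namely $L^r$-boundedness, adaptedness, and finiteness of $\Gamma_r$. Adaptedness is immediate, since a product of two $\mathcal{F}_t$-measurable random variables is again $\mathcal{F}_t$-measurable. For the $L^r$-bound I would apply H\"older's inequality with the conjugate exponents $p,q$ to $E[|X_t(\theta)|^r|Y_t(\theta)|^r]$; taking $r$-th roots yields $\|X_t(\theta)Y_t(\theta)\|_r\leq \|X_t(\theta)\|_{rp}\|Y_t(\theta)\|_{rq}$, and taking the supremum over $\theta$ and $t$ gives $M_r(XY)\leq M_{rp}(X)M_{rq}(Y)<\infty$. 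This is exactly the unconditional counterpart of \eqref{betlehem1}.

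The substance of the proof lies in controlling $\gamma_r(\tau,XY)$, i.e. in bounding $\|X_t(\theta)Y_t(\theta)-E[X_t(\theta)Y_t(\theta)\mid\mathcal{F}_{t-\tau}^+]\|_r$. My strategy is to approximate the product $X_tY_t$ by the product of the two separate conditional expectations. Writing $\tilde X_t:=E[X_t(\theta)\mid\mathcal{F}_{t-\tau}^+]$ and $\tilde Y_t:=E[Y_t(\theta)\mid\mathcal{F}_{t-\tau}^+]$, both of which are $\mathcal{F}_{t-\tau}^+$-measurable, the product $\tilde X_t\tilde Y_t$ is a legitimate test variable for Lemma \ref{mall}. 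Applying \eqref{dirk} with $\mathcal{H}=\mathcal{F}_{t-\tau}^+$ then reduces the task to estimating $2\|X_tY_t-\tilde X_t\tilde Y_t\|_r$.

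The next step is the telescoping identity $X_tY_t-\tilde X_t\tilde Y_t=(X_t-\tilde X_t)Y_t+\tilde X_t(Y_t-\tilde Y_t)$, followed by the triangle inequality and H\"older. The first summand is bounded by $\|X_t-\tilde X_t\|_{rp}\|Y_t\|_{rq}\leq \gamma_{rp}(\tau,X)M_{rq}(Y)$, using $t\geq\tau$; the second by $\|\tilde X_t\|_{rp}\|Y_t-\tilde Y_t\|_{rq}\leq M_{rp}(X)\gamma_{rq}(\tau,Y)$, where $\|\tilde X_t\|_{rp}\leq \|X_t\|_{rp}$ by the conditional Jensen inequality (contractivity of conditional expectation on $L^{rp}$). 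Taking the supremum over $\theta$ and $t\geq\tau$ yields $\gamma_r(\tau,XY)\leq 2[\gamma_{rp}(\tau,X)M_{rq}(Y)+M_{rp}(X)\gamma_{rq}(\tau,Y)]$, and summing over $\tau$ gives $\Gamma_r(XY)\leq 2[\Gamma_{rp}(X)M_{rq}(Y)+M_{rp}(X)\Gamma_{rq}(Y)]<\infty$, the unconditional analogue of \eqref{betlehem2}. This settles all three verifications.

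The only genuinely delicate point is the reduction via Lemma \ref{mall}: one must take the approximating variable to be the product $\tilde X_t\tilde Y_t$ of the two separate conditional expectations, rather than $E[X_tY_t\mid\mathcal{F}_{t-\tau}^+]$ itself, which is not a product and so cannot be split by H\"older. Once this choice is made, the rest is a routine combination of the triangle and H\"older inequalities together with the contraction property of conditional expectation; I expect no serious obstacle beyond bookkeeping of the conjugate exponents $rp$ and $rq$.
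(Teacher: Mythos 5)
Your proposal is correct and follows essentially the same route as the paper: H\"older's inequality for the $L^r$-bound, then Lemma \ref{mall} applied with the product $E[X_t\vert\mathcal{F}_{t-\tau}^+]\,E[Y_t\vert\mathcal{F}_{t-\tau}^+]$ as the approximating variable, followed by a telescoping split, H\"older, and the Jensen contraction. The only (immaterial) difference is the grouping in the telescoping identity — you write $(X_t-\tilde X_t)Y_t+\tilde X_t(Y_t-\tilde Y_t)$ while the paper uses $X_t(Y_t-\tilde Y_t)+(X_t-\tilde X_t)\tilde Y_t$ — yielding the same bound up to which factor carries the conditional expectation.
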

\begin{proof} We drop $\theta$ in the notation.
It is clear from H\"older's inequality that 
\[
M_r(XY)\leq M_{rp}(X)M_{rq}(Y),
\]
so $X_tY_t$ is bounded in $L^r$. Using Lemma \ref{mall}, let us estimate,
for $t,m\geq 1$,
\begin{eqnarray*}
	\Vert X_tY_t-E[X_tY_t\vert\mathcal{F}_{t-m}^+] \Vert_r  &\leq&\\
2\Vert X_tY_t-E[X_t\vert\mathcal{F}_{t-m}^+] E[Y_t\vert\mathcal{F}_{t-m}^+]\Vert_r &\leq&\\
2\Vert X_tY_t-X_t E[Y_t\vert\mathcal{F}_{t-m}^+]\Vert_r +
2\Vert X_t E[Y_t\vert\mathcal{F}_{t-m}^+]-E[X_t\vert\mathcal{F}_{t-m}^+] E[X_t\vert\mathcal{F}_{t-m}^+]\Vert_r
&\leq&\\ 2 \Vert X_t\Vert_{rp} \Vert Y_t-E[Y_t\vert\mathcal{F}_{t-m}^+]\Vert_{rq}+ 2
\Vert X_t- E[X_t\vert\mathcal{F}_{t-m}^+]\Vert_{rp}\Vert E[Y_t\vert\mathcal{F}_{t-m}^+]\Vert_{rq}&\leq&\\
2\Vert X_t\Vert_{rp} \Vert Y_t-E[Y_t\vert\mathcal{F}_{t-m}^+]\Vert_{rq}+
2\Vert X_t- E[X_t\vert\mathcal{F}_{t-m}^+]\Vert_{rp}\Vert Y_t\Vert_{rq},
\end{eqnarray*}
by H\"older's and Jensen's inequalities. This shows the $L$-mixing property
of order $r$, noting the assumptions on $X_t$, $Y_t$.
\end{proof}

\begin{lemma}\label{ken} Let $D$ be bounded.
	Fix $n\in\mathbb{N}$ and let $\psi_t$, $t\geq n$ be a sequence of $D$-valued, $\mathcal{F}_n$-measurable
	random variables. Let $X_t(\theta)$, $\theta\in D$,
$t\in\mathbb{N}$ be UCLM-$(p,1)$ for some $p>1$, satisfying the CLC property and
	define the process $Y_t:=X_{t}(\psi_t)$, $t\geq n$.
	Then
	$$
	M_p^{n}(Y)\leq M_p^{n}(X),\quad \Gamma_p^{n}(Y)\leq \Gamma_p^{n}(X)\mbox{ a.s.}
	$$
\end{lemma}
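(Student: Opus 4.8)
The plan is to reduce both inequalities to a single substitution (``freezing'') principle and then use the CLC property to pass from deterministic to random parameters. Since $M_p^n(Y)=\sup_{t}E^{1/p}[|Y_{n+t}|^p\,|\,\mathcal{F}_n]$ and $\gamma_p^n(\tau,Y)=\sup_{t\ge\tau}E^{1/p}[\,|Y_{n+t}-E[Y_{n+t}\,|\,\mathcal{F}_{n+t-\tau}^+\vee\mathcal{F}_n]|^p\,|\,\mathcal{F}_n]$ involve only countably many indices, and $\Gamma_p^n$ is a countable sum, it suffices to prove, for each fixed $t$ (resp.\ each fixed $\tau$ and each $t\ge\tau$), the almost sure bounds $E[|Y_{n+t}|^p\,|\,\mathcal{F}_n]\le(M_p^n(X))^p$ and $E[\,|Y_{n+t}-E[Y_{n+t}\,|\,\mathcal{F}_{n+t-\tau}^+\vee\mathcal{F}_n]|^p\,|\,\mathcal{F}_n]\le(\gamma_p^n(\tau,X))^p$; the two asserted inequalities then follow by taking suprema over the (countable) index set and summing over $\tau$.

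The key tool is a freezing principle, valid under Assumption \ref{opa}: since $\mathcal{F}_n=\sigma(\varepsilon_i:i\le n)$ and conditioning on $\mathcal{F}_n$ amounts to integrating out the independent future coordinates $\{\varepsilon_i:i>n\}$, any $\mathcal{F}_n$-measurable $D$-valued random variable may be substituted for the parameter inside such conditional expectations. First I would note that $\psi_{n+t}$, being $\mathcal{F}_n$-measurable, is also measurable with respect to $\mathcal{F}_{n+t-\tau}^+\vee\mathcal{F}_n\supseteq\mathcal{F}_n$ (for $t\ge\tau$ this sigma-field integrates out only the coordinates $n+1,\dots,n+t-\tau$). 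Freezing then gives $E[|X_{n+t}(\psi_{n+t})|^p\,|\,\mathcal{F}_n]=\big(E[|X_{n+t}(\theta)|^p\,|\,\mathcal{F}_n]\big)\big|_{\theta=\psi_{n+t}}$ as well as $E[Y_{n+t}\,|\,\mathcal{F}_{n+t-\tau}^+\vee\mathcal{F}_n]=\big(E[X_{n+t}(\theta)\,|\,\mathcal{F}_{n+t-\tau}^+\vee\mathcal{F}_n]\big)\big|_{\theta=\psi_{n+t}}$, so that the centered increment for $Y$ is exactly the centered increment for $X$ frozen at $\theta=\psi_{n+t}$.

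For each fixed $\theta$ the defining essential suprema yield, almost surely, $E^{1/p}[|X_{n+t}(\theta)|^p\,|\,\mathcal{F}_n]\le M_p^n(X)$ and the analogous bound by $\gamma_p^n(\tau,X)$ for the centered field. When $\psi_{n+t}$ takes finitely many values on an $\mathcal{F}_n$-partition, these transfer immediately by splitting on the partition. For a general $\psi_{n+t}$ I would approximate it by simple $\mathcal{F}_n$-measurable, $D$-valued $\psi^{(k)}$ with $|\psi^{(k)}-\psi_{n+t}|\le 2^{-k}$ (here $D$ bounded is used). The CLC property, extended from lag one to every lag by the tower property, and extended to the centered field $X_{n+t}(\theta)-E[X_{n+t}(\theta)\,|\,\mathcal{F}_{n+t-\tau}^+\vee\mathcal{F}_n]$ via the triangle inequality together with $\mathcal{F}_n\subseteq\mathcal{F}_{n+t-\tau}^+\vee\mathcal{F}_n$ and the tower property (which only doubles the constant), gives $E[|X_{n+t}(\psi^{(k)})-X_{n+t}(\psi_{n+t})|\,|\,\mathcal{F}_n]\le K|\psi^{(k)}-\psi_{n+t}|\to0$. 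Hence $X_{n+t}(\psi^{(k)})\to X_{n+t}(\psi_{n+t})$ in $L^1$, so almost surely along a subsequence, and conditional Fatou upgrades the per-$k$ bounds (valid since each $\psi^{(k)}$ is simple) to the desired bound for $\psi_{n+t}$.

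The hard part will be precisely this passage from simple to general parameters for a field that is \emph{discontinuous} in $\theta$: one cannot expect pathwise convergence $X_{n+t}(\psi^{(k)})\to X_{n+t}(\psi_{n+t})$, and the conditional $L^p$-continuity that a direct estimate of the $p$-th conditional moment as a function of $\theta$ would require is simply not available. The whole point is that the CLC property supplies exactly the conditional $L^1$-continuity that is needed, and that conditional Fatou is one-sided, so that $L^1$-convergence (equivalently, convergence in probability) suffices to carry the upper bound through to the limit; the product structure of Assumption \ref{opa} is what legitimizes substituting the random parameter in the first place. I expect the careful justification of the freezing step and of the limit $X_{n+t}(\psi^{(k)})\to X_{n+t}(\psi_{n+t})$ to be the only genuinely delicate points, the remaining manipulations being routine.
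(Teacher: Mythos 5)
Your proposal is correct and follows essentially the same route as the paper's own proof: handle $\mathcal{F}_n$-measurable step functions directly from the definitions, approximate a general $\psi_{n+t}$ almost surely by such step functions, use the CLC property to get convergence in probability of $X_{n+t}(\psi^{(k)})$ to $X_{n+t}(\psi_{n+t})$ (and hence of the corresponding conditional expectations for the $\gamma$-part), and pass the $M$- and $\gamma$-bounds to the limit by (conditional) Fatou, summing over $\tau$ for $\Gamma_p^n$. The only deviations are cosmetic: the paper proves this lemma \emph{before} invoking Assumption \ref{opa}, so your freezing detour through the product structure is unnecessary, and for the centered field the paper deduces $L^1$-convergence of the conditional expectations from $L^p$-boundedness (uniform integrability) rather than from your tower-property extension of CLC, though both devices serve the same purpose.
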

\begin{proof} If the $\psi_t$ are 
$\mathcal{F}_n$-measurable step functions then this follows easily
from the definitions. For general $\psi_t$, one can take $\mathcal{F}_n$-measurable 
step function approximations $\psi^k_t$, $k\in\mathbb{N}$ of the $\psi_t$ (in the almost sure sense). The CLC property
implies that $X_t(\psi^k_t)$ tends to $X_t(\psi_t)$ in probability as $k\to\infty$. 
By Fatou's lemma, $M_p^{n}(Y(\psi^k_{\cdot}))\leq M_p^{n}(X)$, $k\in\mathbb{N}$ now implies
$M_p^{n}(Y(\psi_{\cdot}))\leq M_p^{n}(X)$. The sequence $X_t(\psi^k_t)$ is bounded in $L^p$. 
It follows that $E[X_{n+t}(\psi^k_{n+t})\vert\mathcal{F}_{n+t-\tau}^+\vee\mathcal{F}_n]$ tends
to $E[X_{n+t}(\psi_{n+t})\vert\mathcal{F}_{n+t-\tau}^+\vee\mathcal{F}_n]$ in $L^1$, a fortiori, 
in probability. Hence, for each $\tau\geq 1$, 
$\gamma_p^{n}(Y(\psi_{\cdot}^k),\tau)\leq \gamma_p^{n}(X,\tau)$, $k\in\mathbb{N}$ implies 
$\gamma_p^{n}(Y(\psi_{\cdot}))\leq \gamma_p^{n}(X,\tau)$, by Fatou's lemma. Consequently,
$\Gamma_p^{n}(Y(\psi_{\cdot}))\leq \Gamma_p^{n}(X)$ a.s.
\end{proof}

\begin{remark}\label{minuse}
{\rm Fix $n\in\mathbb{N}$.
Let $Y_t$ be a conditionally $L$-mixing process of order $(p,1)$ for some $p\geq 1$ and define $W_t:=Y_t-E[Y_t\vert\mathcal{F}_n]$, $t\geq n$.
Then it is easy to check that $M^n_p(W)\leq 2M^n_p(Y)$ and $\Gamma_p^n(W)=\Gamma_p^n (Y)$.}
\end{remark}

Let us now enter the setting where for all $t\in\mathbb{N}$, $\mathcal{F}_t=\sigma(\varepsilon_j,\ j\in\mathbb{N},\ j\leq t)$, $\mathcal{F}_t^+:=\sigma(\varepsilon_j,\ j>t)$
for some i.i.d. sequence $\varepsilon_j$, $j\in\mathbb{Z}$ with values in some Polish space $\mathcal{X}$.
Let $\mu$ be the law of $(\varepsilon_0,\varepsilon_{-1},\ldots)$ on $\mathcal{X}^{-\mathbb{N}}$.
For given $\mathbf{e}=(e_0,e_{-1},\ldots)\in \mathcal{X}^{-\mathbb{N}}$
and $n\in\mathbb{N}$, we define the measure
$$
P^{\mathbf{e},n}:=\left(\otimes_{i>n} \nu\right) \bigotimes \left(\otimes_{i\leq n} \delta_{e_{i-n}}\right),
$$
where $\delta_x$ is the probability concentrated on the point $x\in\mathcal{X}$.
The corresponding expectation will be denoted by
$E^{\mathbf{e},n}[\cdot]$.

In this setting the concept of conditional $L$-mixing is easily related to ``ordinary'' $L$-mixing and we
will be able to use results of \cite{laci1} directly, see the proof of Theorem \ref{estim}.
For each $n\in\mathbb{Z}$, we denote by $Z_n$ the random variable $(\varepsilon_n,\varepsilon_{n-1},\ldots)$ and by $\tilde{\mu}$ their law
on $\mathcal{X}^{-\mathbb{N}}$ (which does not depend on $n$).
Let $X_t$, $t\in\mathbb{N}$ be a \emph{stochastic process} bounded in $L^r$ for some $r\geq 1$. We introduce the quantities
\begin{eqnarray*}
M^{\mathbf{e},n}_r(X) &:=& \sup_{t\in\mathbb{N}}
E^{\mathbf{e},n}[|X_{n+t}|^r]^{1/r},\\
\gamma^{\mathbf{e},n}_r(\tau,X) &:=& \sup_{t\geq\tau}
E^{\mathbf{e},n}[|X_{n+t}-E^{\mathbf{e},n}[X_{n+t}\vert \mathcal{F}_{n+t-\tau}^+]|^r]^{1/r},\ \tau\geq 1,\\
\Gamma^{\mathbf{e},n}_r(X) &:=&\sum_{\tau=1}^{\infty}\gamma^{\mathbf{e},n}_r(\tau,X),
\end{eqnarray*}
which are well-defined for $\tilde{\mu}$-almost every $\mathbf{e}$.

\begin{proof}[Proof of Theorem \ref{estim}.] 
For any non-negative random variable $Y$ on $(\Omega,\mathcal{F},P)$,
\begin{eqnarray}
E^{\mathbf{e},n}[Y]\Big\vert_{\mathbf{e}=Z_n} &=& E[Y\vert\mathcal{F}_n]\mbox{ a.s.}\label{putain}
\end{eqnarray}
This can easily be proved for indicators of the form
$Y=1_{\{\varepsilon_{n+j}\in A_j,\ -k\leq j\leq k\}}$ with some $k\in\mathbb{N}$ and
with Borel sets $A_j\subset\mathcal{X}$ and then it extends to all non-negative measurable $Y$.
It follows that
\begin{equation}\label{korker1}
M^n_r(W)=M^{\mathbf{e},n}_r(W)\vert_{\mathbf{e}=Z_n}. 
\end{equation}
A similar argument also establishes
\begin{eqnarray}\label{oregbacsi}
E^{\mathbf{e},n}[Y\vert\mathcal{F}_{n+t-\tau}^+]\Big\vert_{\mathbf{e}=Z_n} &=& E[Y\vert\mathcal{F}_{n+t-\tau}^+\vee\mathcal{F}_n]\mbox{ a.s.},
\end{eqnarray}
for all $t\geq 1$ and $1\leq \tau\leq t$ hence also
\begin{equation}\label{korker2}
\gamma^{\mathbf{e},n}_r(\tau,X)\vert_{\mathbf{e}=Z_n}=\gamma^n_r(\tau,X)\mbox{ a.s.} 
\end{equation}
From the conditional $L$-mixing property of $W_t$, $t\in\mathbb{N}$ under $P$ (of order $(r,1)$)
it follows that, for $\tilde{\mu}$-almost every $\mathbf{e}$, 
the process $W_{t+n}$, $t\in\mathbb{N}$ is $L$-mixing under $P^{\mathbf{e},n}$.
Theorems 1.1 and 5.1 of \cite{laci1} (applied under $P^{\mathbf{e},n}$) imply
$$
E^{\mathbf{e},n}\left[ \max_{n < t \le m} 
\left| \sum_{s = n+1}^{t} b_s W_s\right|^r  \right]^{1/r} 
\le C_r \left( \sum_{s=n+1}^{m} b_s^2 \right)^{1/2} \sqrt{{M}^{\mathbf{e},n}_r(W) \Gamma_r^{\mathbf{e},n}(W)},$$
for $\tilde{\mu}$-almost every $\mathbf{e}$.
Now \eqref{putain}, \eqref{korker1} and \eqref{korker2} imply \eqref{mandrill}.
\end{proof}

Now we turn to the proofs of Section \ref{harom}.
We first recall Lemma 2.2 of \cite{laci3}, which states that the discrete flow defined by (\ref{discrete}) below inherits the exponential
stability property \eqref{gag}.
Let $\mathbb{M}:=\{(m,n)\in\mathbb{N}:\ m\leq n\}$.

\begin{lemma}\label{expstab} Let Assumptions \ref{boun} and \ref{stab} be in force.
For each $0\leq m\leq n$ and $\xi\in D_{\xi}$, define $z:\mathbb{M}\times D\to D$ 
by the recursion
\begin{equation}\label{discrete}
z(m,m,\xi):=\xi,\quad z(n+1,m,\xi):=z(n,m,\xi)+\lambda G(z(n,m,\xi)).
\end{equation}
If $d$ is large enough and $\lambda$ is small enough then this makes sense and $z(n,m,\xi)\in D_{\theta}$ for all $n\geq m$.
Furthermore, for each $\alpha'<\alpha$ (see Assumption \ref{stab}) there is $C(\alpha')>0$ such that
\begin{equation}\label{tok}
\left\vert \frac{\partial}{\partial \xi}z(n,m,\xi)\right\vert\leq C(\alpha') e^{-\lambda\alpha'(n-m)}.
\end{equation}
\hfill$\Box$\end{lemma}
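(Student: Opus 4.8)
The plan is to transfer the exponential stability \eqref{gag} of the continuous flow $y$ to its Euler discretization $z$ by comparing the two over time windows of length of order $1/\lambda$. Differentiating the recursion \eqref{discrete} in the initial condition shows that the Jacobian $B(n,m,\xi):=\partial_\xi z(n,m,\xi)$ satisfies the linear recursion $B(n+1,m,\xi)=(I+\lambda G'(z(n,m,\xi)))B(n,m,\xi)$ with $B(m,m,\xi)=I$, so $B(n,m,\xi)=\prod_{j=m}^{n-1}(I+\lambda G'(z(j,m,\xi)))$. The discrete flow satisfies the cocycle identity $z(n,m,\xi)=z(n,k,z(k,m,\xi))$, whose derivative reads $B(n,m,\xi)=B(n,k,z(k,m,\xi))\,B(k,m,\xi)$ for $m\le k\le n$; this is the device that will let me paste together estimates on consecutive blocks. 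Throughout, Assumption \ref{boun} provides uniform bounds on $G'$ and $G''$ on $D$, which is what makes the discretization error controllable.

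First I would carry out a one-block comparison. Fix $T>0$, set $L:=\lceil T/\lambda\rceil$ (so that $T\le\lambda L\le T+\lambda$), and compare $z(\cdot,k,\eta)$ and its Jacobian with the continuous solution $y(\cdot,k,\eta)$ and $\partial_\eta y$ over $[k,k+L]$, starting from an arbitrary $\eta\in D_\theta$. A standard Gronwall estimate for the Euler scheme, using boundedness of $G'$ and $G''$, gives a trajectory error $\sup_{k\le j\le k+L}|z(j,k,\eta)-y(j,k,\eta)|=O(\lambda)$ and, for the variational equations, $|B(k+L,k,\eta)-\partial_\eta y(k+L,k,\eta)|=O(\lambda)$, with constants depending on $T$ but not on $k,\eta,\lambda$, precisely because $\lambda L$ stays bounded. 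Since $\phi(D_\theta)\subset D_{\overline{y}}$, the continuous flow from $\eta$ stays in $D_{\overline{y}}\subset D$, so \eqref{gag} applies and gives $|\partial_\eta y(k+L,k,\eta)|\le C^* e^{-\lambda\alpha L}$; hence $|B(k+L,k,\eta)|\le C^* e^{-\lambda\alpha L}+C_1\lambda$. To turn this into a contraction at rate $\alpha'$, I factor out $e^{-\lambda\alpha' L}$ and use $\lambda L\ge T$ to write
\[
C^* e^{-\lambda\alpha L}+C_1\lambda\le e^{-\lambda\alpha' L}\big(C^* e^{-(\alpha-\alpha')T}+C_1 e^{\alpha'(T+1)}\lambda\big).
\]
As $\alpha'<\alpha$, I first fix $T$ so large that $C^* e^{-(\alpha-\alpha')T}\le\tfrac12$ and then take $\lambda$ so small that $C_1 e^{\alpha'(T+1)}\lambda\le\tfrac12$, obtaining the one-block contraction $|B(k+L,k,\eta)|\le e^{-\lambda\alpha' L}$, uniformly in $k$ and in $\eta\in D_\theta$.

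Next I would iterate. Writing $n-m=qL+s$ with $0\le s<L$ and factoring $B(n,m,\xi)$ through the cocycle identity into $q$ full-block Jacobians $B(m+(i+1)L,m+iL,\cdot)$ (each of norm $\le e^{-\lambda\alpha' L}$) times one remainder factor over $s<L$ steps (of norm $\le(1+\lambda\sup_D|G'|)^{L}\le e^{2T\sup_D|G'|}=:C_2$), I obtain $|B(n,m,\xi)|\le C_2 e^{-\lambda\alpha' qL}\le C_2 e^{\alpha'(T+1)}e^{-\lambda\alpha'(n-m)}$, which is \eqref{tok}. For the claim that $z(n,m,\xi)$ never leaves $D_\theta$, I would run the companion estimate: the global Euler error is the sum over steps of the per-step local truncation errors (each $O(\lambda^2)$), propagated by the discrete fundamental solution along the trajectory, which decays at rate $\lambda\alpha'$ exactly as above; summing the resulting geometric series yields $\sup_n|z(n,m,\xi)-y(n,m,\xi)|=O(\lambda)$ uniformly in time. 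Since $\xi\in D_\xi$ forces $y(n,m,\xi)\in\phi(D_\xi)\subset D_y$, taking $d$ large and $\lambda$ small keeps $z(n,m,\xi)$ within $d$ of $D_y$, hence inside $S(D_y,d)\subset D_\theta$.

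The main obstacle is the circularity between ``$z$ stays in $D_\theta$'' and ``the Jacobian decays'': the block contraction uses \eqref{gag}, which is only legitimate when the block's starting point lies in $D_\theta$ (so that its continuous orbit remains in $D_{\overline{y}}\subset D$), while confining the iterates to $D_\theta$ relies on the decay already holding. I would resolve this by induction over blocks, the base case being $z(m,m,\xi)=\xi\in D_\xi\subset D_\theta$: assuming the estimates up to the start of the current block, the within-block Gronwall bound keeps $z$ in $S(D_{\overline{y}},d')\subset D$, \eqref{gag} may be invoked, the contraction and the $O(\lambda)$ error bound extend to the next block boundary, and the strict nesting $D_y\subset D_\theta\subset D_{\overline{y}}\subset D$ together with the slack $d,d'$ places that boundary point back in $D_\theta$. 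The only genuinely delicate analytic point, beyond this bookkeeping, is that the one-block discretization constant $C_1$ be independent of the base point and of $\lambda$; this is exactly where the uniform bounds on $G'$ and $G''$ from Assumption \ref{boun} --- rather than mere Lipschitz continuity of $G$ --- are indispensable, since the variational error involves $|G'(z)-G'(y)|\le\Vert G''\Vert_\infty|z-y|$.
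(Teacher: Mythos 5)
Your proof is sound, and it is worth noting that the paper itself offers no argument for this lemma at all: the statement is recalled verbatim from Lemma 2.2 of \cite{laci3} and closed with an empty box, so any complete proof is necessarily ``extra'' relative to the paper. Your route --- transfer the continuous-time stability \eqref{gag} to the Euler discretization by working on blocks of $L=\lceil T/\lambda\rceil$ steps, prove a one-block Gronwall comparison of $z$ and $y$ (trajectories and variational equations, each with error $O(\lambda)$ uniform in the base point because $\lambda L$ is bounded), absorb the $O(\lambda)$ perturbation into the gap $\alpha-\alpha'$ by fixing $T$ first and then $\lambda$, and iterate through the cocycle identity --- is exactly the standard mechanism behind such discrete-inheritance results, and in spirit matches the perturbation arguments of the cited source. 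Two points in your write-up deserve the emphasis you give them: first, the order of quantifiers ($T$ fixed before $\lambda$, since the one-block constant $C_1$ grows like $e^{cT}$) is what makes the contraction $|B(k+L,k,\eta)|\le e^{-\lambda\alpha' L}$ legitimate; second, you correctly identify and break the circularity between confinement in $D_{\theta}$ and Jacobian decay by induction over blocks, using the nesting $D_y\subset D_{\theta}\subset D_{\overline y}\subset D$ with the slacks $d,d'$ --- note that when invoking \eqref{gag} along segments between $z$- and $y$-iterates you should place the whole segment inside $S(D_y,d)\subset D_{\theta}$ (the error being smaller than $d$), since $D$ is not assumed convex; your bookkeeping already accommodates this. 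One cosmetic observation: for $\xi\in D_{\xi}$ your uniform-in-time error is $O(\lambda)$, so confinement really only needs $\lambda$ small for any fixed $d>0$; the statement's ``$d$ large enough'' is the safe formulation and consistent with how Lemma \ref{matti} later uses $d$ for the stochastic iterates, where the deviation is \emph{not} small in $\lambda$.
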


\begin{remark}\label{kellmajd}
{\rm Actually, the same arguments also imply that the recursion \eqref{discrete}
is well-defined for all $\xi\in D_{\theta}$,
stays in $D$ and satisfies \eqref{tok}, provided that $d'$ is large enough and $\lambda$ is
sufficiently small.}
\end{remark}

For convenience's sake, we recall a result from \cite{geman}, which is also given as Lemma 4.2 of \cite{laci3}.
\begin{lemma}\label{lem_dif}
Let Assumptions \ref{boun} and \ref{stab} be satisfied. Let $y_t:=y(t,0,\xi)$, $t\geq 0$. 
Let $x_t$, $t\geq 0$ be a continuous, piecewise continuously differentiable curve such that $x_0 = \xi$. Then for $t\geq 0$,
\begin{equation}\label{dif_ode}
x_t - y_t = \int_0^t{ \frac{\partial}{\partial \xi}y(t,w,x_w)(\dot{x}_w - G(x_w))dw }.
\end{equation} 
\end{lemma}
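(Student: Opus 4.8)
The plan is to prove \eqref{dif_ode} by a nonlinear variation-of-constants (Alekseev-type) argument, exploiting that $(s,\xi)\mapsto y(t,s,\xi)$ is a flow. Fix $t\ge 0$ and introduce the auxiliary curve
\[
\Phi(w):=y(t,w,x_w),\qquad 0\le w\le t.
\]
Its endpoints already produce the two sides of the claim: since $y(s,s,\cdot)$ is the identity, $\Phi(t)=y(t,t,x_t)=x_t$, while $x_0=\xi$ gives $\Phi(0)=y(t,0,\xi)=y_t$. Thus $x_t-y_t=\Phi(t)-\Phi(0)$, and it remains only to differentiate $\Phi$ and integrate. Under Assumption \ref{boun} the flow is jointly $C^1$ in $(s,\xi)$, and since $x$ is continuous and piecewise continuously differentiable, so is $\Phi$; hence $\Phi(t)-\Phi(0)=\int_0^t\Phi'(w)\,dw$, the integrand being defined for all but finitely many $w$.

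By the chain rule (with $t$ frozen), the $w$-dependence of $\Phi$ enters through the initial-time slot and the initial-value slot, so
\[
\Phi'(w)=\frac{\partial}{\partial s}y(t,w,x_w)+\frac{\partial}{\partial\xi}y(t,w,x_w)\,\dot x_w .
\]
The crux is the identity
\[
\frac{\partial}{\partial s}y(t,s,\xi)=-\frac{\partial}{\partial\xi}y(t,s,\xi)\,\lambda G(\xi),
\]
which I would obtain from the variational equations: both $\partial_\xi y(t,s,\xi)$ and $\partial_s y(t,s,\xi)$ solve, as functions of $t$, the same linearized equation $\dot u=\lambda DG(y(t,s,\xi))\,u$ along the trajectory, and differentiating the relation $y(s,s,\xi)=\xi$ in $s$ supplies the initial data $\partial_\xi y(s,s,\xi)=I$ and $\partial_s y(s,s,\xi)=-\lambda G(\xi)$; uniqueness for linear systems then forces $\partial_s y=\partial_\xi y\cdot(-\lambda G(\xi))$. (Alternatively one may differentiate the semigroup identity $y(t,s,\xi)=y(t,s+h,y(s+h,s,\xi))$ at $h=0$.) Substituting yields $\Phi'(w)=\frac{\partial}{\partial\xi}y(t,w,x_w)\,(\dot x_w-\lambda G(x_w))$, and integrating over $[0,t]$ gives \eqref{dif_ode}, the integrand being the defect of $\dot x_w$ from the field $\lambda G$ driving \eqref{ode}.

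The genuinely substantive step is the $\partial_s y$ identity; once it is in place the rest is bookkeeping. The remaining technical care concerns only regularity and domains: the joint $C^1$-dependence and the non-blow-up of $y(t,w,\cdot)$ for $0\le w\le t$ follow from Assumption \ref{boun} together with the fact that the relevant trajectories remain inside $D$ (cf. Lemma \ref{expstab} and Remark \ref{kellmajd}), and the finitely many corners of the piecewise-$C^1$ curve $x$ affect $\Phi'$ only on a null set, hence do not disturb the integral.
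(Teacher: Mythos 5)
Your proposal is correct and follows the paper's own proof essentially verbatim: both arguments introduce the auxiliary curve $w\mapsto y(t,w,x_w)$, apply the fundamental theorem of calculus and the chain rule, and conclude from the identity $\partial_s y(t,s,\xi)=-\partial_\xi y(t,s,\xi)\,\lambda G(\xi)$, the only difference being that the paper cites this identity from Hartman (Theorem 3.1, p.~96) while you derive it yourself via the variational equation and uniqueness for linear systems (or the semigroup identity), which is a valid, self-contained substitute. One remark in your favor: since the field in \eqref{ode} is $\lambda G$, your integrand $\dot x_w-\lambda G(x_w)$ is the correct one, and the missing $\lambda$ in the printed display \eqref{dif_ode} is evidently a typo in the paper, as its applications in Lemma \ref{matti} use precisely the $\lambda$-version.
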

\begin{proof} 
For $0 \le w \le t$, let $z_w = y(t,w,x_w)$. The LHS of (\ref{dif_ode}) can be written as 
$$z_t - z_0 = \int_0^t{\dot{z}_wdw} = \int_0^t{\left( \frac{\partial}{\partial w} y(t,w,x_w) + \frac{\partial}{\partial \xi}y(t,w,x_w)\dot{x}_w\right)dw}.$$
From Theorem 3.1 on page 96 of \cite{hartman} we obtain that, for all $x\in\mathbb{R}$,
$$\frac{\partial}{\partial w}y(t,w,x) + \frac{\partial}{\partial \xi}y(t,w,x)G(x) = 0,$$
and hence the proof is complete.
\end{proof}

Let $\xi\in D_{\theta}$ and define $\tilde{z}_n:=z(n,0,\xi)$, $n\in\mathbb{N}$.
The next lemma summarizes some arguments of \cite{laci3} in the present setting, for the sake
of a self-contained presentation.

\begin{lemma}\label{matti}
Let Assumptions \ref{boun} and \ref{stab} be satisfied. 
Let $y_t:=y(t,0,\xi)$ for some $\xi\in D_{\xi}$ and let $\theta_n$ be defined by \eqref{bab}.  
If $d, d'$ are large enough then, for all $n \in \mathbb{N}$, we have $\theta_n \in D_{\theta}$ 
and also $\tilde{z}_n \in D$.
\end{lemma}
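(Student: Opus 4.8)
The plan is to treat the two assertions separately, the second being the substantive one. Since $\xi\in D_\xi\subset D_\theta$, the claim $\tilde z_n=z(n,0,\xi)\in D$ is immediate from Lemma~\ref{expstab} (with $d$ large and $\lambda$ small), which in fact already delivers $\tilde z_n\in D_\theta\subset D$ for every $n$. So the real content is the containment $\theta_n\in D_\theta$ for the stochastic recursion \eqref{bab}. Here only a \emph{soft} containment is sought, so I will not invoke the sharp martingale estimate of Theorem~\ref{estim}; the crude pointwise bound $|H(\theta,x)-G(\theta)|\le 2\sup|H|=:B$ (recall that $H$, and hence $G=EH$, is bounded) is all that is needed.

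I would argue by a stopping-time bootstrap. Put $\sigma:=\inf\{n\ge 0:\ \theta_n\notin D_\theta\}$ (with $\inf\emptyset=\infty$) and aim to show $\sigma=\infty$. For $n\le\sigma$ the points $\theta_0,\dots,\theta_{n-1}$ lie in $D_\theta\subset D_{\overline y}$, and, taking $\lambda$ small, each segment joining consecutive iterates has length $\le\lambda B$ and hence stays inside $S(D_{\overline y},d')\subset D$; this is where $d'$ large and $\lambda$ small enter. Let $\bar\theta_w$, $w\ge 0$, be the piecewise-linear interpolation of the $\theta_k$ (so $\dot{\bar\theta}_w=\lambda H(\theta_k,X_{k+1})$ on $(k,k+1)$ and $\bar\theta_0=\theta_0=\xi$), and let $y_w:=y(w,0,\xi)$. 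By the variation-of-constants identity for the flow of \eqref{ode} (the argument of Lemma~\ref{lem_dif}),
\[
\theta_n-y_n=\bar\theta_n-y_n=\int_0^n \frac{\partial}{\partial\xi}y(n,w,\bar\theta_w)\bigl(\dot{\bar\theta}_w-\lambda G(\bar\theta_w)\bigr)\,dw .
\]

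The defect $\dot{\bar\theta}_w-\lambda G(\bar\theta_w)=\lambda\bigl(H(\theta_{\lfloor w\rfloor},X_{\lfloor w\rfloor+1})-G(\bar\theta_w)\bigr)$ is bounded in norm by $\lambda B$, while the exponential-stability estimate \eqref{gag}, which holds for \emph{all} initial conditions in $D$, gives $|\partial_\xi y(n,w,\bar\theta_w)|\le C^* e^{-\lambda\alpha(n-w)}$. Hence, for $n\le\sigma$,
\[
|\theta_n-y_n|\le \int_0^n C^* e^{-\lambda\alpha(n-w)}\,\lambda B\,dw\le \lambda B\,C^*\int_0^\infty e^{-\lambda\alpha u}\,du=\frac{B\,C^*}{\alpha}=:\Delta_0 ,
\]
a bound independent of both $n$ and $\lambda$. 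Since $\phi(D_\xi)\subset D_y$ forces $y_n=y(n,0,\xi)\in D_y$, this places $\theta_n\in S(D_y,\Delta_0)$. Choosing $d$ large enough that $d>\Delta_0$ — legitimate because $\Delta_0$ depends only on $B$ and the stability constants $C^*,\alpha$, not on $d$ — we obtain $\theta_n\in S(D_y,d)\subset D_\theta$ for all $n\le\sigma$. In particular $\theta_\sigma\in D_\theta$, which contradicts the definition of $\sigma$ unless $\sigma=\infty$; thus $\theta_n\in D_\theta$ for every $n$.

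The main obstacle is the bootstrap itself: the comparison bound is only available while the trajectory stays in $D_\theta$, yet it is precisely what forces the trajectory to remain there, so the quantifiers must be arranged as ``first $d,d'$ large, then $\lambda$ small'' and one must verify that no constant degrades under this ordering. The two delicate points are (i) keeping the interpolation $\bar\theta_w$ inside $D$, so that $G$, $H$ and the kernel bound \eqref{gag} are all meaningful along the path — handled by the collar $S(D_{\overline y},d')\subset D$ together with the smallness of the increments $\lambda B$ — and (ii) the cancellation whereby the factor $\lambda$ carried by the nominal drift in \eqref{ode} makes the defect $O(\lambda)$ and thereby annihilates the $1/(\lambda\alpha)$ produced by integrating the stability kernel, leaving a bound on $|\theta_n-y_n|$ that is uniform in $\lambda$.
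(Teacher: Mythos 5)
Your treatment of $\theta_n$ is essentially the paper's own proof: piecewise-linear interpolation, the variation-of-constants identity of Lemma \ref{lem_dif}, the crude defect bound $\lambda\cdot 2\sup|H|$, and integration of the stability kernel \eqref{gag} to obtain a bound independent of $\lambda$, followed by the choice $d>\Delta_0$. Your stopping-time bootstrap merely formalizes the paper's ``stays in $D_\theta$ for ever'' continuation argument (with matching constants, $BC^*/\alpha$ versus $2C^*C^\dagger\alpha^{-1}$), and your extra care that the interpolated path remains in $D$ — so that $G$ and \eqref{gag} are meaningful along it — is a point the paper passes over silently. This half is correct.

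The gap is in the half you declared immediate. In the paper, $\tilde z_n:=z(n,0,\xi)$ is defined, in the sentence preceding the lemma, for an arbitrary $\xi\in D_\theta$ — not for the $\xi\in D_\xi$ that starts $\theta_n$ and $y_t$. For such a starting point Lemma \ref{expstab} does not apply (its hypothesis is $\xi\in D_\xi$), so your appeal to it covers only a special case; this is precisely why the lemma carries the hypothesis ``$d'$ large enough'', which in your argument serves only the incidental purpose of keeping the interpolation of $\theta$ inside $D$. The generality matters downstream: in the proof of Theorem \ref{main} the lemma must guarantee that $\overline z_t=z(t,nT,\theta_{nT})$ is well defined, and there the starting point $\theta_{nT}$ is known to lie in $D_\theta$ only. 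The missing step is a second comparison of the same type as the first: with $\tilde z_t$ the piecewise-linear extension of \eqref{discrete} and $y_t=y(t,0,\xi)$, $\xi\in D_\theta$, Lemma \ref{lem_dif} gives
\[
\tilde z_t - y_t = \int_0^t \frac{\partial}{\partial\xi}y(t,w,\tilde z_w)\,\lambda\bigl(G(\tilde z_{[w]})-G(\tilde z_w)\bigr)\,dw,
\]
and since $|G(\tilde z_{[w]})-G(\tilde z_w)|\le L^*|\tilde z_{[w]}-\tilde z_w|\le \lambda L^*K^*$ (with $L^*$ a Lipschitz constant and $K^*$ a bound for $G$), the kernel bound \eqref{gag} yields $|\tilde z_t-y_t|\le C^*\alpha^{-1}\lambda L^*K^*$. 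Now $y_t\in\phi(D_\theta)\subset D_{\overline y}$, so $d'>C^*\alpha^{-1}\lambda L^*K^*$ — automatic once $\lambda$ is small — places $\tilde z_t$ in $S(D_{\overline y},d')\subset D$; compare Remark \ref{kellmajd}. Note that here the defect is $O(\lambda^2)$, which is why this collar width can be taken small with $\lambda$, in contrast to the $\theta$-comparison where $d$ must exceed a $\lambda$-independent constant. With this supplement your proof is complete.
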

\begin{proof}
We denote by $\theta_t$ the piecewise linear extension of $\theta_n$, i.e. for $t \in (n,n+1)$, 
we set $\theta_t = (1-(t-n))\theta_n + (t-n)\theta_{n+1}.$ For $w \in (n,n+1)$, it is easy to see that
$\dot{\theta}_w = \theta_{n+1} - \theta_n= \lambda H(\theta_{[w]}, X_{[w]+1}) $ where $[w]$ denotes the integer part of $w$. 
Thus, Lemma \ref{lem_dif} implies that as long as $\theta_w \in D_{\theta}$ for all $0 \le w \le t$,
$$\theta_t - y_t = \int_0^t{ \frac{\partial}{\partial \xi} y(t,w,\theta_w) \lambda\left(H(\theta_{[w]},X_{[w]+1}) - G(\theta_w) 
\right) dw}.$$
Since $|H|$ and $|G|$ are bounded by a constant, say, $C^{\dagger}$, (\ref{gag}) implies that
$$|\theta_t - y_t| \le \int_0^t{C^* e^{-\lambda \alpha (t-w)} \lambda 2C^{\dagger} dw}\le 
2C^*C^{\dagger}\alpha^{-1}.$$
It is known that $y_t \in D_y$ whenever $y_0 \in D_{\xi}$. Now, if 
$d>2C^*C^{\dagger}\alpha^{-1}$ then $|\theta_t - y_t|$ will be smaller than the distance between 
$D_{y}$ and $D_{\theta}^c$, where $D_{\theta}^c$ denotes the complement of $D_{\theta}$, 
hence $\theta_t$ will stay in $D_{\theta}$ for ever.

The proof for $\tilde{z}_n \in D$ is similar. 
The piecewise linear extension of $\tilde{z}_n$ is denoted by $\tilde{z}_t$, $t\geq 0$.
By computations as before,
$$
\tilde{z}_t - y_t = \int_0^t{\frac{\partial}{\partial \xi} y(t,w,\tilde{z}_w) 
\lambda\left(G(\tilde{z}_{[w]}) - G(\tilde{z}_w) \right)  dw}.
$$
Denoting by $K^*$ (resp. $L^*$) a bound for $|G|$ (resp. a Lipschitz-constant for $G$), we obtain 
$$
|G(\tilde{z}_{[w]}) - G(\tilde{z}_w)|\le L^*|\tilde{z}_{[w]} - \tilde{z}_w|\le L^* \lambda G(\tilde{z}_{[w]}) 
\le \lambda L^*K^*,
$$
hence
$$|\tilde{z}_t - y_t| \le \int_0^t{ C^* e^{-\lambda \alpha (t-w)} \lambda^2 L^*K^* dw } \le C^* \alpha^{-1} \lambda L^*K^*.$$
It follows that if $d' >C^*\alpha^{-1} \lambda L^*K^*$ then $\tilde{z}_t \in D$, for all $t$.  
\end{proof}

\begin{remark}{\rm Note that our estimates for $d$, $d'$ in the above proof are
somewhat different: by choosing $\lambda$ small enough we can make $d'$ as small
as we wish whereas we do not have this option for $d$. This is in contrast with
\cite{laci3}, where $d$ can also be made arbitrarily small by choosing $\lambda$
small. This difference comes from the fact that in \cite{laci3} Lipschitz-continuity
of $\theta\to H(\theta,\cdot)$ is assumed, unlike in the present setting.}
\end{remark}

\begin{proof}[Proof of Theorem \ref{main}.] We follow the main lines of the arguments in \cite{laci2,laci3}.
However, details deviate significantly as our present assumptions are different from those of
the cited papers.

Lemma \ref{matti} above will guarantee that $\theta_t$ and $z_t,\overline{z}_t$ (see below) are well-defined.
Clearly, $z_t = z(t,0,\theta_0)$. Set $T = [1/(\lambda\alpha')]$, where $0<\alpha'<\alpha$ 
is as in Lemma \ref{expstab}
and $[x]$ denotes the integer part of $x\in\mathbb{R}$.
For each $n\in\mathbb{N}$, we set $\overline{z}_{nT}:=\theta_{nT}$ and define recursively
\begin{align*}
\overline{z}_t:&=\overline{z}_{t-1}+\lambda G(\overline{z}_{t-1}), \qquad  nT < t < (n+1)T.
\end{align*}
In other words, $\overline{z}_t=z(t,nT,\theta_{nT})$.
By the triangle inequality, we obtain, for any $t \in \mathbb{N} $,
\begin{equation}\label{triangle}
|\theta_t - z_t| \le |\theta_t - \overline{z}_t| + |\overline{z}_t - z_t|.
\end{equation}
\textsl{Estimation for $|\theta_t - \overline{z}_t|$.} Fix $n$ and let $nT<t<(n+1)T$.
\begin{eqnarray*}
|\theta_t-\overline{z}_t|=\lambda \left\vert \sum_{k=nT}^{t-1} [H(\theta_k,X_{k+1})-G(\overline{z}_k)]\right\vert &\leq&\\
\lambda\sum_{k=nT}^{t-1} \left\vert H(\theta_k,X_{k+1})-H(\overline{z}_k,X_{k+1})\right\vert &+&\\
\lambda\left\vert\sum_{k=nT}^{t-1} \left(H(\overline{z}_k,X_{k+1})-E[H(\overline{z}_k,X_{k+1})\vert\mathcal{F}_{nT}]\right)
\right\vert &+&\\
\lambda \sum_{k=nT}^{t-1}
\left\vert E[H(\overline{z}_k,X_{k+1})\vert\mathcal{F}_{nT}]-G(\overline{z}_k)\right| &=:& \lambda(S_1+S_2+S_3).
\end{eqnarray*}
It is clear that
\begin{eqnarray*}
ES_3 &\leq&
E\left[\sup_{\vartheta\in D}\sum_{k=nT}^{\infty} \left| E[H(\vartheta,X_{k+1})\vert\mathcal{F}_{nT}] -G(\vartheta) \right|\right]<C',
\end{eqnarray*}
for some $C'<\infty$, by Assumption \ref{infsum}.

Turning our attention to $S_1$, the CLC property implies
$$
ES_1=E[E[S_1\vert\mathcal{F}_{nT}]]\leq \sum_{k=nT}^{t-1} K E|\theta_k-\overline{z}_k|.
$$
On each interval $nT\le t < (n+1)T$, we now estimate $S_2$ as follows,
$$
S_2\leq \sup_{nT<t\leq (n+1)T} \left|\sum_{k=nT}^{t-1}
\left(H(\overline{z}_k,X_{k+1})-E[H(\overline{z}_k,X_{k+1})\vert\mathcal{F}_{nT}]\right)\right|.
$$
Note the UCLM-$(r,1)$ property of $H(\cdot,\cdot)$ as well as Lemma
\ref{ken} and Remark \ref{minuse}. Apply Theorem
\ref{estim} for $nT$ instead of $n$ and with the choice 
$b_t\equiv 1$ and $$
W_t:=H(\overline{z}_t,X_{t+1})-E[H(\overline{z}_t,X_{t+1})\vert\mathcal{F}_{nT}],\ nT<t\leq (n+1)T,\ W_t:=0,\ 0\leq t\leq nT,
$$
note that $E[W_t\vert\mathcal{F}_{nT}]=0$ for all $t$.
We get 
\begin{eqnarray*}
ES_2=E[E[S_2\vert\mathcal{F}_{nT}]]\leq E[E^{1/r}[S_2^r\vert\mathcal{F}_{nT}]] &\leq&\\
C_r T^{1/2} E\left[\sqrt{M^{nT}_r(W)\Gamma^{nT}_r(W)}\right]\leq C_r T^{1/2} 
\sqrt{EM^{nT}_r(W) E\Gamma^{nT}_r(W)} &\leq&\\
C''T^{1/2} & &
\end{eqnarray*}
with some $C''<\infty$, independent of $n$, by the UCLM-$(r,1)$ property of $W$.

Putting together our estimates so far, we obtain for $nT\le t < (n+1)T$,
$$E|\theta_t - \overline{z}_t| \leq \lambda\left( \sum_{k=nT}^{t-1}K E|\theta_k-\overline{z}_k| +
C''T^{1/2} + C'\right).$$
Recall that $E|\theta_t - \overline{z}_t|$ is finite by boundedness of $D$. The discrete Gronwall lemma yields the following estimate, independent of $n$:
\begin{equation}\label{sofronitsky}
E|\theta_t - \overline{z}_t| \leq \lambda(C''T^{1/2}+C')(1+\lambda K)^{T}.
\end{equation}
Note that
$$
(1+\lambda K)^T\leq e^{\lambda KT}\leq e^{K/\alpha'}.
$$

\noindent\textsl{Estimation for $|\overline{z}_t - z_t|$.} Noting $z_0 = \theta_0$ and using the fundamental theorem of calculus, 
we estimate for $nT \leq t < (n+1)T$, using telescoping sums,
\begin{eqnarray*}
& & |\bar{z}_t - z_t| \\
&\leq& \sum_{k=1}^n |z(t,kT,\theta_{kT}) - z(t, (k-1)T, \theta_{(k-1)T})| \\
&=& \sum_{k=1}^n |z(t,kT,\theta_{kT}) - z(t, kT, z(kT,(k-1)T, \theta_{(k-1)T}))| \\
&=& 
\sum_{k = 1}^{n} \int_0^1 \left| \frac{\partial}{\partial \xi}z(t,kT, s \theta_{kT} + (1-s)z(kT,(k-1)T, \theta_{(k-1)T})) 
\right|  ds  \\
&\times& |\theta_{kT} - z(kT,(k-1)T, \theta_{(k-1)T})|\\
&\leq&  C(\alpha') \sum_{k = 1}^{n}  e^{-\lambda \alpha'(t-kT)} \left( |\theta_{kT -1} - \bar{z}_{kT-1}| + 
\lambda |H(\theta_{kT-1},X_{kT}) - G(\overline{z}_{kT-1})|\right).
\end{eqnarray*}
Notice that there is $\tilde{C}>0$, independent of $n,t$ such that
\[
\sum_{k=1}^n e^{-\lambda\alpha'(t-kT)}\leq \tilde{C}.
\]
Therefore, the fact that $H$, $G$, $D$ are bounded, imply
\begin{eqnarray}\nonumber
E|\overline{z}_t - z_t| &\leq& c \sum_{k = 1}^{n} e^{-\lambda \alpha'(t-kT)} E|\theta_{kT -1} - \bar{z}_{kT-1}|+ c \sum_{k = 1}^{n} e^{-\lambda \alpha'(t-kT)} \lambda \\
&\leq& c' \lambda^{1/2},\label{pattoo}
\end{eqnarray}
with some $c,c'>0$, by \eqref{sofronitsky} and by the choice of $T$.
Finally, putting together our estimations \eqref{sofronitsky}, \eqref{pattoo} and using (\ref{triangle}), for $\lambda$ small
enough, we obtain
$$E|\theta_t - z_t| \leq C \lambda^{1/2},$$
with some $C>0$, which completes the proof.
\end{proof}

\begin{proof}[Proof of Corollary \ref{karszt}.]
Recall $\alpha'$ from  Lemma \ref{expstab}. The fundamental theorem of calculus yields
\begin{align*}
|z_t - \theta^*| &\leq |z_0 - \theta^*|\int_0^1 \left| \frac{\partial}{\partial \xi}z(t,0, s z_0 + (1-s)\theta^*) \right|  ds\\
&\le C(\alpha') e^{-\lambda \alpha' t} |z_0 - \theta^*|,
\end{align*}
and this is $\leq \lambda^{1/2}$ for $t\geq t_0(\lambda)$ if $t_0(\lambda)=C^{\circ}\ln(1/\lambda)/\lambda$ for some $C^{\circ}$.
Since
$$
|\theta_t-\theta^*|\leq |\theta_t-z_t|+|z_t - \theta^*|,
$$
the statement follows.
\end{proof}

\begin{proof}[Proof of Theorem \ref{louvre}.] Let us work conditionally on the event 
$\mathcal{E}_0=\eta\in \mathbb{R}^{\mathbb{Z}}$ where 
$$
\mathcal{E}_l=(\varepsilon^1_{i+l})_{i\in\mathbb{Z}},
$$
until further notice.

The CLC property and Assumption \ref{boun} are trivial. Define $\mathcal{F}_n:=\sigma(\varepsilon^2_j; j\leq n)$ and  
$\mathcal{F}^+_n:=\sigma(\varepsilon^2_j; j> n)$. 

We now prove that $H(\theta,X_t)$ is UCLM-$(r,1)$ with respect to the
given $(\mathcal{F}_n,\mathcal{F}^+_n)$. Boundedness of $H$ implies that 
$M^n_r(X)$, $n\in\mathbb{N}$ is uniformly bounded.

Fix $1\leq m\leq t$. Define recursively
$$
\xi_{t-m}:=x,\quad \xi_{l+1}:=F(\mathcal{E}_{l+1},\varepsilon_{l+1}^2,\xi_l),\ l\geq t-m.
$$

Set $X_{t,m}^+:=\xi_t$. By construction, $X_{t,m}^+$ is $\mathcal{F}_{t-m}^+$-measurable and
$$
\vert H(\theta,X_{t,m}^+)-H(\theta,X_t)\vert \leq L\rho^{m} |x-X_{t-m}|,
$$
where $L$ is a Lipschitz-constant for $x\to H(\theta,x)$. 
So we can further estimate 
\begin{eqnarray*}
E\left[ |x-X_{t-m}|^r\big\vert\mathcal{F}_0\right]^{1/r} \leq \sum_{j=1}^{\infty} 
E\left[ |\tilde{X}^{t-m}_j-\tilde{X}^{t-m}_{j-1}|^r\big\vert\mathcal{F}_0\right]^{1/r} &\leq&\\ 
\sum_{j=1}^{\infty} \rho^{j-1} E\left[ |x- F(\mathbf{f}_{t-m-j+1},\varepsilon_{t-m-j+1}^2,x)\vert^r 
\big\vert\mathcal{F}_0\right]^{1/r} &\leq&\\ 
C\sum_{j=1}^{t-m} \rho^{j-1} \Vert \vert\varepsilon_{t-m-j+1}^2\vert+1\Vert_r
+C\sum_{j=t-m+1}^{\infty} \rho^{j-1} [\vert\varepsilon_{t-m-j+1}^2\vert+1] &\leq&\\
C\Vert \vert\varepsilon_{0}^2\vert+1\Vert_r\sum_{j=1}^{\infty} \rho^{j-1} 
+C\sum_{k=0}^{\infty} \rho^{t-m+k} [\vert\varepsilon_{-k}^2\vert+1] &\leq&\\
C\Vert \vert\varepsilon_{0}^2\vert+1\Vert_r\sum_{j=1}^{\infty} \rho^{j-1} 
+C\sum_{k=0}^{\infty} \rho^{k} [\vert\varepsilon_{-k}^2\vert+1],
\end{eqnarray*}
using Assumption \ref{contract}, the independence of $\varepsilon_j^2$, $j\geq 1$ from $\mathcal{F}_0$ and
the $\mathcal{F}_0$-measurability of $\varepsilon^2_j$, $j\leq 0$. Note that this last estimate is
independent of $t$.
We can carry out analogous estimates with $\mathcal{F}_n$ instead of $\mathcal{F}_0$ and these imply, via Lemma \ref{mall}, 
$$
\gamma_r^n(m,X)\leq 2LC\rho^{m}\left[(\Vert \varepsilon_0^2\Vert_r+1)\sum_{j=1}^{\infty} \rho^{j-1}
+\sum_{k=0}^{\infty} \rho^{k} [\vert\varepsilon_{n-k}^2\vert+1]\right],
$$
for each $n\in\mathbb{N}$, which implies that the sequence $\Gamma^n_r(X)$ is bounded in $L^1$,
showing the UCLM-$(r,1)$ property for $H(\theta,X_t)$. 

Since $X_{t-m}^+$ is $\mathcal{F}_{t-m}^+$-measurable, the above estimates also show that 
$H(\theta,X_t)$ is (unconditionally) $L$-mixing of order $(r,1)$, hence 
Remark \ref{frater} implies 
Assumption \ref{infsum}. As the estimates are uniform in $\eta\in\mathbb{R}^{\mathbb{Z}}$,
the argument of Theorem \ref{main} can be applied. 
\end{proof}

\section{Conclusion}

There is a large number of natural ramifications of our results that could be pursued: the estimation of higher
order moments of the tracking error using the property UCLM-$(r,p)$ for $p>1$; accommodating multiple roots for equation \eqref{g}; proving the
convergence of the decreasing gain version of \eqref{elso}; considering the convergence
of concrete procedures. We leave these for later work in order to
convey a clear message, highlighting the novel techniques we have introduced.

\medskip

\noindent\textbf{Acknowledgments.} We thank two anonymous referees for
several insightful comments that led to substantial improvements.
The major part of this work was done while the second author was working as a Whittaker Research Fellow in Stochastic Analysis in the School of Mathematics, University of Edinburgh, United Kingdom. 
We have made use of the resources provided by the Edinburgh Compute and Data Facility (ECDF), see
$$
\mathtt{http://www.ecdf.ed.ac.uk/}
$$
This work was supported by The Alan Turing Institute under the EPSRC grant EP/N510129/1, in the
framework of a ``small research group'', during the summer of 2016. 
Huy N. Chau and Mikl\'os R\'asonyi were also supported by the
``Lend\"ulet'' Grant LP2015-6 of the Hungarian Academy of Sciences and
by the NKFIH (National Research, Development and Innovation Office, Hungary) 
grant KH 126505. 
Sotirios Sabanis gratefully acknowledges the support of the Royal Society through the IE150128 grant.
We thank L\'aszl\'o Gerencs\'er for helpful discussions and dedicate this paper to him.

\end{document}